\newcommand{\ndN}{\mathbb{N}}
\renewcommand{\Pr}[1]{\mathbb{P}(#1)}
\newcommand{\Prb}[1]{\mathbb{P}\left(#1\right)}
\newcommand{\Ex}[1]{\mathbb{E}[#1]}
\newcommand{\Exb}[1]{\mathbb{E}\left[#1\right]}
\newcommand{\Va}[1]{\mathbb{V}[#1]}
\newcommand{\one}{{\mathbbm{1}}}
\newcommand{\cA}{\mathcal{A}}
\newcommand{\cB}{\mathcal{B}}
\newcommand{\cC}{\mathcal{C}}
\newcommand{\cD}{\mathcal{D}}
\newcommand{\cG}{\mathcal{G}}
\newcommand{\cH}{\mathcal{H}}
\newcommand{\cP}{\mathcal{P}}
\newcommand{\cQ}{\mathcal{Q}}
\newcommand{\cR}{\mathcal{R}}
\newcommand{\cS}{\mathcal{S}}
\newcommand{\cT}{\mathcal{T}}
\newcommand{\cX}{\mathcal{X}}
\newcommand{\Cyc}{\textsc{CYC}}
\newcommand{\Set}{\textsc{SET}}
\newcommand{\Seq}{\textsc{SEQ}}
\newcommand{\mA}{\mathsf{A}}
\newcommand{\mC}{\mathsf{C}}
\newcommand{\mD}{\mathsf{D}}
\newcommand{\mR}{\mathsf{R}}
\newcommand{\mT}{\mathsf{T}}
\newcommand{\eps}{\varepsilon}
\newtheorem{theorem}{Theorem}[section]
\newtheorem{lemma}[theorem]{Lemma}
\newtheorem{definition}[theorem]{Definition}
\newtheorem{remark}[theorem]{Remark}
\newtheorem{example}[theorem]{Example}
\newtheorem{algo}[theorem]{Algorithm}
\newtheorem{fact}[theorem]{Fact}
\numberwithin{equation}{section}
\begin{document}

\title{Exact-size Sampling of Enriched Trees in Linear Time} 

\address{Department of Mathematics, Ludwigs-Maximilians-Universit\"at M\"unchen.}
\author{Konstantinos Panagiotou}
\email{kpanagio@math.lmu.de}
\author{Leon Ramzews}
\email{ramzews@math.lmu.de}
\author{Benedikt Stufler}
\address{Institute for Discrete Mathematics and Geometry, Technische Universit\"at Wien.}
\email{benedikt.stufler@tuwien.ac.at}

\thanks{Konstantinos Panagiotou receives funding from the European Research Council, ERC Grant Agreement 772606-PTRCSP.
Leon Ramzews receives funding by the Deutsche Forschungsgemeinschaft (DFG, German Research Foundation), Project PA 2080/3-1.}

\date{\today}
\maketitle

\begin{abstract}
	Various combinatorial classes such as outerplanar graphs and maps, series-parallel graphs, substitution-closed classes of permutations and many more allow bijective encodings by so-called \emph{enriched trees}, which are rooted  trees with additional structure on the offspring of each node. Using this universal description we develop sampling procedures that uniformly generate objects from this classes with a given size $n$ in expected time $O(n)$.
	The key ingredient is a representation of enriched trees in terms of decorated Bienaym\'e--Galton--Watson trees, which allows us to develop a novel combination of Devroye's efficient sampler for trees~\cite{MR2888318} with Boltzmann sampling techniques. Additionally, we construct expected linear time samplers for critical Bienaym\'e--Galton--Watson trees having exactly $n$ (out of $\ge n$ total) nodes with outdegree in some fixed set, enabling uniform generation for many combinatorial classes such as dissections of polygons.
\end{abstract}

\maketitle

\section{Introduction and Main Results}
\label{sec:introduction}

Suppose that we are given a combinatorial class $\cC$, that is, a countable set equipped with a size function $\lvert\cdot\rvert:\cC\to \ndN_0$ such that $\cC_n := \{C\in\cC:\lvert C\rvert = n\}$ is finite for all $n\in\ndN_0$.
A \emph{sampler} for $\cC$ is a sequence of instructions involving random decisions that construct an element $\mC \in \cC$ following some given probability distribution.
The development of efficient samplers, or equivalently, the efficient random generation of combinatorial objects, is an active and prominent research area with widespread applications. There is a plethora of results and techniques, many of which address specific problems and develop ad hoc methods, and others that create universal techniques that are applicable in various situations. 

Let us start right away with an important case that is very well understood and also directly relevant to the results that will be derived here. Let $\xi$ be a random non-negative integer. We create a tree~$\mathsf{T}$ by starting with a single vertex and attaching to it a random number of vertices/children distributed like~$\xi$. Subsequently, we repeat this procedure for every newly created vertex, using independent copies of $\xi$ to determine the number of their children.
The resulting tree is the well-known \emph{Bienaym\'e--Galton--Watson} tree with offspring distribution~$\xi$, and by conditioning $\mathsf{T}$ to have $n$ vertices we obtain a simply generated tree $\mathsf{T}_n$. 
For example, if we choose $\xi$ to be a Poisson distribution, then the distribution of $\mathsf{T}_n$ (after distributing labels to vertices uniformly at random) is the uniform distribution on the class of all rooted Cayley trees with $n$ vertices.  Improving earlier results addressing special cases
or having a longer running time,
Devroye presented in~\cite{MR2888318} a general algorithm for sampling $\mathsf{T}_n$ that runs in expected linear time when $\Ex{\xi}=1$ and $\xi$ has finite variance.
So, this fundamental case is from today's viewpoint very well understood.

Probably the first systematic approach that applies to a broader setting is the \emph{recursive method} by Nijenhuis and Wilf~\cite{MR510047} that can be applied to combinatorial structures defined by specific recursive decompositions.
The original method, although quite broad in applicability, was rather inefficient and thus was developed further and improved in several works~\cite{MR1290534,DZ99}, where eventually samplers with almost linear average time and space complexity were developed. However, all these results are limited to classes that do not allow in general the powerful operation of \emph{substitution}, that is, constructions in which atoms (like vertices or edges in a graph) are replaced by other objects; this limits the applicability of the method to only moderately complex combinatorial classes. Moreover, all variants of the recursive method require (at least) quadratic preprocessing time. 

The recursive method is best suited for \emph{exact-size} sampling, where we fix, for example, the size of the object that we want to sample in advance. This paradigm was relaxed in the seminal paper by Duchon, Flajolet, Louchard and Schaeffer~\cite{MR2095975}, allowing samplers to generate objects with varying target size that may be distributed over the whole of $\mathbb{N}$. The so-called \emph{Boltzmann sampling} paradigm developed in that paper is inspired from methods in Physics and postulates to generate objects from the entire class $\cC$ with probability proportional to $x^{\lvert C\rvert}/\lvert C\rvert!$ for $C\in\cC$, where $x>0$ is a predefined control parameter. 
Boltzmann samplers have many advantages:  their complexity is (for combinatorial specifications) linear in the size of the generated object, in many cases we have good control of the output size by tuning $x$, and their description is simple and intuitive.
For all these reasons the paper~\cite{MR2095975} ignited a whole new line of research, where substantial extensions and improvements were proposed, including the celebrated approximate-size linear time sampler for planar graphs~\cite{MR2573060}, substantial P\'olya-Boltzmann extensions for unlabelled structures~\cite{MR2498128,MR2810913}, multi-parametric samplers~\cite{MR2735331,MR3773638} enabling the control of several parameters simultaneously, numerical procedures for approximating the values of the generating functions~\cite{MR2946384} and many more~\cite{MR2593621,MR3101704,MR2971340}.
The Boltzmann framework enables us to perform exact-size sampling by rejection (discard objects until the target size is met) and truncation (stop sampling as soon as the objects become too large). In particular, exact-size sampling is possible in expected quadratic time whenever the counting sequence for $(|\cC_n|)_{n \in \mathbb{N}}$ satisfies certain properties, for example if $|\cC_n| =\Theta(n^{-a} \gamma^{n} n!)$ for some $a \in (1,2)$ and $\gamma > 0$, see the so-called 'singular samplers' in~\cite{MR2095975}. The Boltzmann framework also enables sampling in this setting with a target size interval of the form $[(1- \epsilon)n, (1+\epsilon)n]$ in expected time $O(n/\epsilon)$ for fixed but arbitrary $\epsilon>0$. However, this so-called approximate size sampling may introduce unpredictable error terms when performing simulations, so there is significant added value in performing efficient exact-size sampling.
 
In this article we combine the world of Devroye -- linear time sampling of conditioned Bienaym\'e--Galton--Watson trees -- with the world of Boltzmann sampling to assemble efficient linear time and exact-size samplers that are applicable to a broader spectrum of combinatorial classes.
Concretely, the classes that we can treat follow a unified representation in terms of \emph{enriched trees}~\cite{MR3773800, StEJC2018}. Before giving a formal definition later, let us mention a few concrete examples that fall within our scope.
\begin{example}
\label{ex:examples_intro}
Our approach allows us to treat in a unified setting the following classes:\footnote{Implementations for  exact-size samplers of selected classes such as outerplanar graphs are available on github:~ \url{https://github.com/BenediktStufler/}}
\begin{enumerate}
    \item subcritical graph classes, including connected series-parallel, outerplanar and cactus graphs;
    \item Bienaym\'e--Galton--Watson trees conditioned on the number of vertices whose degree lies in a given set and otherwise no restriction on the size;
    \item families of outerplanar maps; 
    \item certain subcritical substitution-closed classes of permutations;
    \item cographs (with expected runtime being linear in the output size);
    \item level-$k$ phylogenetic networks.
\end{enumerate}
\end{example}
Let us describe at this point exemplary for the case of series-parallel (SP) graphs what the main obstacles in the development of efficient and exact-size samplers are. First of all, the good news is that these objects can be put in some specific way in bijection to a class of trees; this follows from the general decomposition of connected graphs in parts of higher connectivity~\cite{MR2465772}. On the other hand, these trees are not simply-generated -- in fact, they are \emph{multi-type} Bienaym\'e--Galton--Watson trees -- and thus Devroye's sampler is not applicable. Moreover, the combinatorial specification of 2-connected SP graphs, which play a central role in the specification of connected SP graphs, contains the operation of \emph{difference} of classes (see also Section~\ref{ssec:sccg}, where we treat this specific example). This is a significant obstacle causing problems on various levels of the analysis, as it introduces a '$-$'-sign on the level of the specification. A further problem that also appears (more prominently) in other examples is that sampling from the specification is only the first step: in order to obtain the desired object we further have to apply a bijection. The time required to do so must be taken into account as well. 

The approach taken in this work makes it possible to develop expected linear-time exact-size samplers for the classes in Example~\ref{ex:examples_intro} by addressing all of the aforementioned problems. Very roughly speaking, the problem of the appearance of multi-type trees is addressed by studying the class of enriched trees, that puts us in a position to spot an adequate underlying 'simply-generated' tree. Moreover, the Boltzmann sampling component that we include allows us to solve the problem of the difference of sets by rejection in a rather straightforward way. Finally, we account explicitly in all examples for the cost of realizing the underlying bijections.

One consequence of our main result is the development of new or the improvement of all (with the notable exception of outerplanar maps \cite{MR2185278}) existing samplers for the classes in Example~\ref{ex:examples_intro}.
For instance, prior to this work, the state-of-the-art samplers for series-parallel, outerplanar and cactus graphs run in expected time $O(n^2)$ and are based on Boltzmann sampling, as the number of such graphs of size $n$ is $\Theta(n^{-3/2} \gamma ^n n!)$ for some $\gamma > 0$. See~\cite{zbMATH05039060} and \cite{zbMATH07106247}. 
Moreover, the sampler for 3.~in the examples can be used to sample from classes that are in bijection to these objects, for example dissections of polygons. 
Finally, in~\cite{MR3637994}, among other results, a superlinear uniform sampler for substitution-closed classes of permutations with a finite number of excluded patterns is presented.
Here we will show how to sample from these classes in (optimal) linear time as a consequence of a more general result.

As a last remark let us mention that parallel to this work and by using completely different techniques, Sportiello developed in~\cite{sportiello2021boltzmann} a rather different method for sampling from irreducible context-free combinatorial structures. His approach solves the problem of exact-size sampling from multi-type Bienaym\'e--Galton--Watson trees and thus addresses in a different and complementary way some of the problems that are also tackled here. Moreover, the approach presented there only makes it possible to sample  from classes related to SP graphs, for example SP networks (or two-terminal SP graphs). 

\subsection{Main result: linear-time sampling of enriched trees}
\label{subsec:main_result}
When measuring the running time or the complexity of an algorithm we always assume that we operate under the so-called \emph{RAM model of computation}. This is a widely used approach, followed also in Devroye's paper~\cite{MR2888318}, to establish complexity results that are  independent of the actual machine on which the algorithm is executed. In this model we assume that we operate a hypothetical computer called the \emph{Random Access Machine}  under the following conditions, see for example \cite[Ch.~2]{Skienna2020}:
\begin{enumerate}
\item Basic logical and arithmetic operators like $\{if, call\}$ and $\{+,-,\times,\div\}$ take one time step.
\item Loops and subroutines are not basic operators and count as the composition of many single-step operators.
\item Each memory access takes exactly one step.
\item A number drawn uniformly at random from $[0,1]$ can be generated in one step.
\end{enumerate}
In particular, in this model real numbers can be stored without loss of precision. Let us mention at this point that it is an important question and a significant challenge to develop and analyse algorithms that operate efficiently under other models of computation, for example on a Word RAM with random bits or on a Turing machine; to our knowledge, this is already an open problem for the case considered by Devroye~\cite{MR2888318}.

In the setting considered here we can already formulate a first consequence of our main result, which, among other things, says that we develop linear-time algorithms for sampling series-parallel and outerplanar graphs, as well as permutations from specific substitution-closed classes and trees with a given number of leaves.
\begin{theorem} 
\label{thm:example_main_result}
Under the RAM model of computation, the samplers in Section~\ref{sec:applications} produce objects of size $n$ uniformly at random in expected time $O(n)$ for all the classes in Example~\ref{ex:examples_intro}.
\end{theorem}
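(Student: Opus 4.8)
The plan is to treat Theorem~\ref{thm:example_main_result} as a corollary of the general enriched-tree sampler that forms the main result of this paper: for each of the six classes in Example~\ref{ex:examples_intro} one exhibits the sampler of Section~\ref{sec:applications} as an instance of the generic construction, and then invokes its expected-linear-time guarantee. Concretely, for each class I would supply three ingredients: (i) a \emph{size-preserving bijection} between the class and a suitable family of enriched trees; (ii) a verification that the associated decorated Bienaym\'e--Galton--Watson tree satisfies the criticality and finite-variance hypotheses under which the generic sampler runs in expected linear time; and (iii) a bound of $O(n)$ on the cost of the \emph{decoding map} that realizes the inverse bijection on the sampled object.

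The heart of the reduction is to recast the enriched-tree encoding as a decorated Bienaym\'e--Galton--Watson tree whose \emph{backbone} offspring law is critical with finite variance, so that Devroye's sampler~\cite{MR2888318} generates the backbone in linear time, while the decorations attached to each node are produced by Boltzmann samplers. For the subcritical graph classes in item~(1), the appropriate encoding is furnished by the block and $3$-connectivity decomposition of connected graphs~\cite{MR2465772}, which represents a connected series-parallel (respectively outerplanar, cactus) graph as an enriched tree whose decorations are its $2$-connected (respectively $3$-connected) building blocks. \emph{Subcriticality} is precisely the condition ensuring that the generating function of the decorations has radius of convergence strictly larger than that of the whole class; after the standard exponential tilting this forces a square-root singularity and hence a critical backbone law with finite second moment. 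The remaining classes are handled analogously using their canonical decomposition trees: the substitution decomposition for the permutation classes of item~(4), the cotree for the cographs of item~(5), and the respective tree encodings for outerplanar maps and level-$k$ phylogenetic networks.

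Two specific obstacles singled out in the introduction must be dealt with inside this scheme. The \emph{difference of classes} that appears in the specification of $2$-connected series-parallel graphs (see Section~\ref{ssec:sccg}) is absorbed into the Boltzmann component by rejection: one oversamples decorations from a dominating class and discards the forbidden configurations, and expected linear time is preserved because the acceptance probability stays bounded away from zero, so that the expected number of restarts is $O(1)$. The conditioning in item~(2) --- generating a tree with \emph{exactly} $n$ nodes of prescribed outdegree among a possibly larger total --- is not a direct instance of the plain enriched-tree sampler and is instead handled by the dedicated sampler for critical Bienaym\'e--Galton--Watson trees announced in the abstract. In every case the explicit reconstruction algorithm of Section~\ref{sec:applications} is inspected to confirm that the decoding bijection runs in time linear in the output size (which, for the cographs of item~(5), is the sense in which linearity is claimed).

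The main obstacle I anticipate is twofold. First, one must verify the analytic hypotheses --- criticality of the tilted offspring law together with a finite second moment, equivalently a square-root type singularity of the relevant generating function --- uniformly for all six classes; this is where subcriticality of the graph families and the finiteness of the excluded-pattern set for the permutation classes of~\cite{MR3637994} genuinely enter. Second, and more delicate, is ensuring that the combined cost of Boltzmann sampling the decorations, resolving the difference of classes by rejection, and applying the possibly multi-layer decoding bijection (for example connected $\to$ $2$-connected $\to$ two-terminal networks for series-parallel graphs, or the recursive substitution structure for permutations) remains linear; controlling the rejection overhead so that the total expected number of restarts across all layers stays $O(1)$ is the crux of the argument.
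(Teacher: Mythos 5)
Your proposal follows essentially the same route as the paper: reduce each class to a tame family of $\cR$-enriched trees (the paper's Fact~\ref{fact:examples_enriched_trees}), invoke the expected-linear-time sampler of Theorem~\ref{thm:algo_runtime_n} (built on Devroye's backbone sampler plus Boltzmann-sampled decorations, with rejection absorbing the difference of classes and a separate treatment of trees conditioned on the number of vertices with outdegree in a fixed set), and bound the decoding bijections by $O(n)$. The only detail worth adding is that tameness also requires the coefficients $|\cR_k|$ to be computable in $\mathrm{e}^{o(k)}$ steps (verified in the paper via the recursive method), but this fits naturally into your item (ii).
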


Towards the proof of Theorem~\ref{thm:example_main_result} we will switch somehow our point of view and look at combinatorial classes as special cases of so-called 'enriched trees'.  
Generally speaking, the enriched tree viewpoint emphasizes that Galton--Watson trees take a special place among random recursive structures.
Specifically, we may sample a random recursive structure by sampling a size-constrained Galton--Watson tree and adding local random 'decorations' later. All classes listed in Example~\ref{ex:examples_intro} are well-known  to admit encodings of this form.

Before we continue let us fix some notation.
Let $\cC$ be a combinatorial class. We (always) consider the labelled setting, meaning that all atoms composing an object of size $n$ bear distinct labels, typically in the set $[n]:=\{1,\dots,n\}$. Any other finite set of labels $U$ with $\lvert U\rvert=n$ is also admissible; then we write $\cC[U]$ to emphasize that we consider objects with labels in~$U$.

With this notation at hand we may describe the main class of objects that we shall study. Given a combinatorial class $\cR$, the class $\cA_\cR$ of $\cR$-enriched trees may informally be described as the class containing all rooted labelled unordered trees, where in addition the offspring set of each vertex is decorated with an object from $\cR$.  More formally, let us denote  by $\cA$ the class of rooted Cayley trees, i.e., rooted labelled unordered acyclic connected graphs. We (slightly) abuse  notation and write $v \in T$ to denote that $v$ is a vertex of $T \in \cA$. Let $P_v$ be the label set of the offspring of $v$, where by 'offspring' we define the set of nodes that are connected to $v$ and are at the same time farther away from the root than $v$. We further define the outdegree $d_T^+(v)$ to be $\lvert P_v\rvert$. Then $\cA_\cR$ contains all sequences of the form
\[
	(T,(R_v)_{v\in T}),
	\quad T\in \cA ~\text{ and }~ R_v \in \cR[P_v]\text{ for all }v\in T,
\]
where the size of an $\cR$-enriched tree $A=	(T,(R_v)_{v\in T})$ is defined as  $\lvert A\rvert = \lvert T\rvert = \sum_{v\in T}\lvert R_v\rvert +1$. 
In light of this definition and in order to avoid trivial cases we assume that $|\cR_0| > 0$ (otherwise $\cA_\cR$ is empty) and $|\cR_k| > 0$ for some $k \ge 2$ (otherwise $\cA_\cR$ is equivalent to a collection of paths), see also Condition~\ref{assumption2} in Definition~\ref{def:tame} below.

Let us write in the sequel $r_k := \lvert \cR_k\rvert$ for $k\in\ndN_0$ and define the generating functions
\[
	\cR(x)
	:= \sum_{k\in\ndN_0}\frac{r_k}{k!}x^k 
	\quad\text{and}\quad 
	\cA_\cR(x) := \sum_{k \in \ndN} \frac{|\cA_\cR[[k]]|}{k!} x^k.
\]
The two functions are related by the important equation
\begin{equation}
\label{eq:relRAR}
	\cA_\cR(x) = x\cR(\cA_\cR(x)),
\end{equation}
see also Section~\ref{sec:preliminaries}, where we present more related facts about $\cR$-enriched trees. 
Let $\rho_{\cR}$ and $\rho_{\cA_\cR}$ be the radii of convergence of $\cR(x)$ and $\cA_\cR(x)$, respectively. It is rather well-known that Equation~\eqref{eq:relRAR} and the aforementioned assumptions $r_0>0$ and $r_k >0$ for at least one $k \ge 2$ entail that $\rho_{\cA_\cR}$, $\cA_\cR(\rho_{\cA_\cR})$, and $\cR(\cA_\cR(\rho_{\cA_\cR}))$ are finite, see Lemma~\ref{lem:xi_finite_exp_mom} and for more background Section~\ref{subsec:simply_generated_trees}. 

The previous considerations imply that $\cR(\cA_\cR(\rho_{\cA_\cR})) \ge r_0 > 0$ and thus  enable us to define a random variable $\xi$ with distribution 
\begin{align}
\label{eq:def_offspring_distribution_R-enriched}
	p_k 
	:= \Pr{\xi = k}
	:= \frac{r_k\cA_\cR(\rho_{\cA_\cR})^k}{ \cR(\cA_\cR(\rho_{\cA_\cR}))k!},
	\qquad k\in \ndN_0.
\end{align}
We will also need the Boltzmann random variable $\Gamma\cR(t)$ for $0<t<\rho_\cR$ given by 
\[
	\Pr{\Gamma\cR(t) = R}
	= \frac{t^{|R|}}{\cR(t) \, |R|!},
	\quad R\in \cR.
\]
We now come to the most crucial part, namely the assumptions that we make for the class of enriched trees that we consider. 
\begin{definition}
\label{def:tame}
We call a class of enriches trees \emph{tame} if it has the following properties:
\begin{enumerate}[label=(\Alph*)]
\item \label{assumption2}'Non-triviality': $r_0>0$ and there exists $k\ge 2$ with $r_k>0$. 
\item \label{assumption1}'Subcriticality': $\rho_{\cR} > \cA_\cR(\rho_{\cA_\cR})$.
\item \label{assumption3}'Computability':  $\cA_\cR(\rho_{\cA_\cR})$ and $\cR(\cA_\cR(\rho_{\cA_\cR}))$ are given, and $p_k$ can be computed in $\mathrm{e}^{o(k)}$ steps for $k\in\ndN_0$.
\item \label{assumption4}'Boltzmann sampler for $\cR$': For any $0<t<\rho_{\cR}$ there exists a sampling procedure for $\Gamma \cR(t)$ running in expected constant time.
\end{enumerate}
\end{definition}
As we will see, the most critical and essential property is Assumption~\ref{assumption1}, which ensures together with well-known results by Janson~\cite{MR2908619} that $p_k$ has exponential tails. 
Moreover, note that under the RAM model of computation the determination of $\cA_\cR(\rho_{\cA_\cR})^k/k!$ takes $O(k)$ steps, so that~\ref{assumption3} actually means that we need to be able to compute $r_k$ (which is in $\ndN_0$) in $\mathrm{e}^{o(k)}$ steps. This is usually no severe restriction, as most of the classes we consider have some kind of combinatorial decomposition, allowing us to use the aforementioned recursive method~\cite{MR510047} to compute $r_k$ in $k^{O(1)}$ time. Further, $\cA_\cR(\rho_{\cA_\cR})$ and $\cR(\cA_\cR(\rho_{\cA_\cR}))$ being given means that we are able to compute these values beforehand.
Finally,~\ref{assumption4} is a rather mild condition, since $t < \rho_\cR$ and thus all moments of $\Gamma \cR(t)$ exist; such samplers can (usually) be designed from the general principles developed in~\cite{MR2095975,MR2810913}. 

A crucial ingredient in the proof of Theorem~\ref{thm:example_main_result} is the following fact on which we elaborate in Section~\ref{sec:applications}.
\begin{fact}
\label{fact:examples_enriched_trees}
For every combinatorial class $\cC$ given in Example~\ref{ex:examples_intro} there exists $\cR$ such that the class~$\cC$ corresponds to the class of enriched trees $\cA_\cR$, where $\cA_\cR$ is tame. 
\end{fact}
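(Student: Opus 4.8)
The plan is to treat each of the six families in Example~\ref{ex:examples_intro} separately, since they rest on different combinatorial decompositions, and for each of them to carry out two tasks: (i) exhibit a class $\cR$ together with a size-preserving bijection identifying $\cC$ with $\cA_\cR$, and (ii) verify that the resulting $\cA_\cR$ meets Conditions~\ref{assumption2}--\ref{assumption4}. Of the four tameness conditions, three are routine in every case. Non-triviality~\ref{assumption2} follows from the presence of objects of small size; computability~\ref{assumption3} holds because each $\cR$ we use comes with an explicit combinatorial specification, so that $r_k$ is accessible in time $k^{O(1)}$ by the recursive method~\cite{MR510047}; and the Boltzmann sampler~\ref{assumption4} for $\Gamma\cR(t)$ is assembled from the standard rules of~\cite{MR2095975,MR2810913} applied to that specification. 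Hence the real content lies in verifying subcriticality~\ref{assumption1}.

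For the encodings I would invoke the classical decompositions underlying each family, all of which are well documented; the enriched-tree formalism of~\cite{MR3773800,StEJC2018} is precisely designed to accommodate them. For the connected graph classes in item~1 the decomposition of a connected graph into its $2$-connected blocks along cut vertices~\cite{MR2465772} produces a block-tree, and taking $\cR$ to record, on a label set $P_v$, the $2$-connected components incident to a vertex realizes $\cC$ as $\cA_\cR$; the difference of classes occurring in the specification of $2$-connected series--parallel graphs only enters the definition of $r_k$ as a difference of two counts and, since $r_k\ge 0$ remains an honest enumeration, poses no obstacle to tameness. For the substitution-closed permutation classes in item~4 I would use the substitution (modular) decomposition tree with $\cR$ built from the simple permutations of the class, and for the cographs in item~5 the cotree plays the analogous role, with $\cR$ assembled from the join/union operations through a \Set-construction. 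Items~3, 6 and~2 are handled in the same spirit by their respective tree-like decompositions; for item~2 one chooses $\cR$ so that each node of the enriched tree corresponds to a vertex of prescribed out-degree while the intervening vertices are absorbed, with appropriate weights, into the decorations, so that $|\cA_\cR|$ equals the number of vertices of prescribed out-degree rather than the total number of vertices.

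To establish subcriticality I would work directly with the functional equation~\eqref{eq:relRAR}. Writing $\tau:=\cA_\cR(\rho_{\cA_\cR})$ and $\phi(y):=y/\cR(y)$, the relation $x=\phi(\cA_\cR(x))$ exhibits $\cA_\cR$ as the functional inverse of $\phi$. Condition~\ref{assumption1} is literally the inequality $\tau<\rho_\cR$, which places $\tau$ strictly inside the disk of convergence of $\cR$; there the standard smooth-inverse analysis applies, the dominant singularity $\rho_{\cA_\cR}=\phi(\tau)$ is a square-root branch point characterized by $\phi'(\tau)=0$, equivalently $\tau\cR'(\tau)=\cR(\tau)$ and hence $\Ex{\xi}=1$, and by the results of Janson~\cite{MR2908619} the offspring law $\xi$ then has exponential tails. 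For the graph classes the inequality $\tau<\rho_\cR$ is the defining property of a \emph{subcritical} graph class and is known to hold for series--parallel, outerplanar and cactus graphs; for the permutation classes it is the subcriticality hypothesis already present in the statement of item~4; and for the remaining families it amounts to the analyticity of the decorating generating function at $\tau$, which I would check either by citing the known singularity analysis of the family or by a direct estimate from the explicit specification of $\cR$.

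The main obstacle is Condition~\ref{assumption1}. In contrast to the other three conditions, which are local and follow from the mere existence of a finite specification, subcriticality is a genuinely analytic inequality that may fail and has to be confirmed family by family, and it is the borderline cases that demand the most care. For cographs, confirming~\ref{assumption1} with respect to the number of vertices is the subtle point, and the weaker 'linear in the output size' formulation in item~5 is the manifestation of this delicacy; for the conditioned Galton--Watson trees of item~2 the point is that the strict inequality $\tau<\rho_\cR$ must be recovered after passing to the reduced count of prescribed-degree vertices. Verifying these two cases, rather than exhibiting the bijections themselves (which are classical), is where I expect the bulk of the work to lie.
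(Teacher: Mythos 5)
Your overall strategy matches the paper's: Section~\ref{sec:applications} is exactly a class-by-class exhibition of the encoding followed by a verification of~\ref{assumption2}--\ref{assumption4}, and you correctly identify that~\ref{assumption1} is the analytically substantive condition (for the graph classes it is Definition~\ref{def:subcritgraph}, for permutations it is~\eqref{eq:subcrit}, and for the degree-restricted trees it reduces to $\rho_\cR>1=\cA_\cR(\rho_{\cA_\cR})$). However, there are two genuine gaps. First, you dismiss the set difference in the specification of $2$-connected series--parallel graphs as affecting only the counting of $r_k$, and assert that the Boltzmann sampler required by~\ref{assumption4} is ``assembled from the standard rules'' of~\cite{MR2095975,MR2810913}. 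Those rules cover sums, products, substitution and the basic constructors, but \emph{not} differences of classes; this is precisely the obstacle the paper highlights in the introduction. The decomposition in Lemma~\ref{lem:decomp2connSP} only gives a sampler for $\cB'+\cB'_{(rm)}$, and one needs the additional observation that $\cB'_{(rm)}\subseteq\cB'$ so that a sampler for $\cB'$ can be obtained by rejecting an output lying in $\cB'_{(rm)}$ with probability $1/2$. Without this (or an equivalent device) your verification of~\ref{assumption4} for series--parallel graphs does not go through.

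Second, for substitution-closed permutation classes the canonical substitution decomposition tree is \emph{not} an $\cR$-enriched tree: its decorations are constrained across edges (no two adjacent vertices may both carry $\oplus$-decorations, nor both $\ominus$-decorations), whereas in $\cA_\cR$ the decoration of a vertex may depend only on its number of children. Taking ``$\cR$ built from the simple permutations'' therefore does not yield the claimed correspondence. The paper resolves this by passing to packed trees decorated by the class $\cQ$ of gadgets and formal symbols $\circledast_k$, which \emph{are} genuine $\Seq(\cQ/\cX)$-enriched trees (via Ehrenborg--M\'endez), and by a two-sample rejection step to recover the full class from its $\oplus$- and $\ominus$-indecomposable parts; this re-encoding is a necessary idea, not a routine detail. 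A smaller inaccuracy: the ``linear in the output size'' caveat for cographs is not a manifestation of any delicacy in~\ref{assumption1} (the conditioned tree is generated in $O(n)$ without difficulty); it simply reflects that the resulting cograph has $\Theta(n^2)$ edges, so merely writing it down costs $\Theta(n^2)$.
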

In Section~\ref{sec:algorithm} we present the backbone of our main result, namely a sampler generating an instance of the random object $\mathsf{A}_n$ drawn uniformly at random from all objects in $\cA_\cR$ of size $n$. Hence together with the next theorem Fact~\ref{fact:examples_enriched_trees} guarantees that the linear time samplers claimed in Theorem~\ref{thm:example_main_result} do indeed exist.
\begin{theorem}
\label{thm:algo_runtime_n}
Assume $\cA_\cR$ is tame. Then under the RAM model of computation the sampler in Section~\ref{sec:algorithm} generates the $\cR$-enriched tree $\mathsf{A}_n$ in expected time $O(n)$.
\end{theorem}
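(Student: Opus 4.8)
The plan is to reduce the generation of the uniform $\cR$-enriched tree $\mathsf{A}_n$ to two coupled tasks: (1) sampling the underlying shape as a conditioned Bienaym\'e--Galton--Watson tree, and (2) decorating its vertices with objects drawn from $\cR$. The starting observation is the fixed-point relation~\eqref{eq:relRAR} together with the definition~\eqref{eq:def_offspring_distribution_R-enriched} of the offspring law $\xi$. First I would verify the key structural fact: if $\mathsf{T}$ denotes the Bienaym\'e--Galton--Watson tree with offspring distribution $\xi = (p_k)_k$, and we decorate each vertex $v$ independently with a sample of $\Gamma\cR(\cA_\cR(\rho_{\cA_\cR}))$ conditioned to have size $d_\mathsf{T}^+(v)$, then the resulting decorated object, conditioned on having total size $n$, is exactly uniform on $(\cA_\cR)_n$. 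This is the standard Boltzmann/weight-preservation computation: the weight $\prod_v p_{d^+(v)} \cdot \bigl(\text{decoration weight}\bigr)$ collapses, via the choice~\eqref{eq:def_offspring_distribution_R-enriched}, to a constant times $\prod_v r_{d^+(v)}/(d^+(v))!$ over trees of size $n$, and the normalization is independent of the particular enriched tree, yielding the uniform distribution. This justifies the architecture of the sampler in Section~\ref{sec:algorithm} and is where Assumption~\ref{assumption2} is silently used to guarantee the objects exist.

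Next I would establish that $\xi$ is critical and has exponential tails, since this is precisely the regime in which Devroye's sampler~\cite{MR2888318} runs in expected linear time. Criticality, $\Ex{\xi}=1$, is a direct consequence of differentiating~\eqref{eq:relRAR} and evaluating at $\rho_{\cA_\cR}$; concretely $\Ex{\xi} = \cA_\cR(\rho_{\cA_\cR})\cR'(\cA_\cR(\rho_{\cA_\cR}))/\cR(\cA_\cR(\rho_{\cA_\cR}))$, and the smooth implicit-function argument (using Assumption~\ref{assumption1}, subcriticality, so that the singularity of $\cA_\cR$ is of square-root type and not inherited from $\cR$) forces this to equal $1$. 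The exponential tails of $p_k$ follow from Assumption~\ref{assumption1} via the cited results of Janson~\cite{MR2908619}: because $\cA_\cR(\rho_{\cA_\cR})$ lies strictly inside the disk of convergence of $\cR$, the coefficients $r_k \cA_\cR(\rho_{\cA_\cR})^k/k!$ decay geometrically, so $\xi$ has finite exponential moments, hence in particular finite variance. Together these two properties place us exactly within the hypotheses under which Devroye's algorithm samples the conditioned shape $\mathsf{T}_n$ (on $n$ vertices) in expected time $O(n)$.

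The remaining work is the runtime bookkeeping, which I expect to be the main obstacle, since the theorem is fundamentally a complexity statement and the subtleties live in the constants and the RAM-model accounting. I would argue as follows. Devroye's sampler produces the shape $\mathsf{T}_n$ in expected $O(n)$ time, but it repeatedly evaluates the probabilities $p_k$; Assumption~\ref{assumption3} guarantees each such evaluation costs $\mathrm{e}^{o(k)}$ steps, and one must show that summed over the tree these evaluations still total $O(n)$ in expectation. Here the exponential tails of $\xi$ are essential: the expected cost contributed by a vertex of outdegree $k$ is $O(p_k \cdot \mathrm{e}^{o(k)}) \cdot n$ summed suitably, and $\sum_k p_k \mathrm{e}^{o(k)} < \infty$ because $p_k$ decays geometrically while $\mathrm{e}^{o(k)}$ is subexponential. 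For the decoration step, Assumption~\ref{assumption4} gives an expected-constant-time Boltzmann sampler for $\Gamma\cR$; one samples decorations by a rejection scheme (draw from $\Gamma\cR$ at the parameter $\cA_\cR(\rho_{\cA_\cR})$ and accept when the size matches the prescribed outdegree), and one checks that the total expected number of Boltzmann draws across all $n$ vertices is $O(n)$. The delicate point is coupling the two phases so that the decoration sizes are forced to agree with the already-fixed outdegrees without inflating the rejection cost; I would handle this by sampling the outdegree sequence first and then generating each decoration conditioned on its size, invoking Assumption~\ref{assumption4} together with the finiteness of all moments of $\Gamma\cR(t)$ for $t<\rho_\cR$ (guaranteed again by subcriticality, Assumption~\ref{assumption1}) to bound the conditional cost. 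Finally I would note that storing real numbers exactly and performing arithmetic in unit time, as licensed by the RAM model, removes any precision-related overhead, closing the estimate at expected $O(n)$ total.
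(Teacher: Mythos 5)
Your overall architecture matches the paper's (condition a critical BGW tree with offspring law $\xi$, decorate by rejection from a Boltzmann sampler, relabel), and your distributional and criticality arguments are sound. But there is one genuine gap that breaks the linear-time claim: you run the rejection scheme for the decorations at the \emph{critical} parameter $t=\cA_\cR(\rho_{\cA_\cR})$. At that parameter the acceptance probability at a vertex of outdegree $k$ is $\Pr{\lvert\Gamma\cR(t)\rvert=k}=r_k t^k/(k!\,\cR(t))=p_k$, so the expected number of draws at that vertex is $1/p_k$, and the expected total number of draws over the tree is $n\,\Ex{1/p_{\xi_1}\mid \Xi_n=n-1}=n\sum_{k}\Pr{\Xi_{n-1}=n-1-k}/\Pr{\Xi_n=n-1}$, which by the local limit theorem is of order $n\cdot\sqrt{n}$ whenever $\cR$ contains objects of infinitely many sizes (as it does for cactus, outerplanar and series-parallel graphs). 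Your appeal to ``finiteness of all moments of $\Gamma\cR(t)$'' does not help here: what must be controlled is the number of rejections needed to hit a prescribed size, not the moments of a single draw. The paper's fix, and the place where subcriticality (Assumption~\ref{assumption1}) does real work in Step~2, is to tilt strictly above criticality: choose $t_0$ with $\cA_\cR(\rho_{\cA_\cR})<t_0<\rho_\cR$, so that the expected number of trials at outdegree $k$ is $g(k)=k!\,\cR(t_0)/(r_k t_0^k)$ and $\Ex{g(\xi)}=\frac{\cR(t_0)}{\cR(\cA_\cR(\rho_{\cA_\cR}))}\sum_k\bigl(\cA_\cR(\rho_{\cA_\cR})/t_0\bigr)^k<\infty$; combined with the cycle lemma and the local limit theorem this gives $\Ex{W_n}=O(n)$.

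A secondary, less serious issue: your accounting for the cost of evaluating the $p_k$ inside Devroye's algorithm sums $\mathrm{e}^{o(k)}$ over the outdegrees of the final tree, but the multinomial vector in Algorithm~\ref{algo:multinomial} is regenerated an expected $\varphi_n^{-1}=\Theta(\sqrt{n})$ times before the constraint $\sum_i iN_i=n-1$ is met, and each round needs $p_0,\dots,p_K$ for $K=\max_i\xi_i$. The paper therefore shows that the cost $X$ of one such computation satisfies $X=O(\Ex{\mathrm{e}^{\eps K}})=O\bigl((n\,\Ex{\mathrm{e}^{t\xi}})^{\eps/t}\bigr)=o(n^{1/2})$ via a Log-Sum-Exp bound and Jensen, so that $X/\varphi_n=O(n)$. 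Your version would need either a caching argument or this kind of maximum-degree estimate to be complete.
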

At this point we already anticipate that the algorithm generating $\mathsf{A}_n$ is structured as follows. We first sample a Bienaym\'e--Galton--Watson tree with offspring distribution $\xi$ of size $n$ using Devroye's algorithm. Subsequently, for each node we repeatedly call the sampling procedure $\Gamma\cR(t)$ until an $\cR$-object of the same size as the outdegree of the node at hand is produced. The latter step enhances the offspring of each node with an additional structure leading to an $\cR$-enriched tree.

\subsection{Plan of the paper}
In Section~\ref{sec:algorithm} we present our sampler, Algorithm~\ref{algo:R-enriched_tree}, for tame $\cR$-enriched trees as claimed in Theorem~\ref{thm:algo_runtime_n}. To prove the linear time complexity of our algorithm we first collect some preliminaries in Section~\ref{sec:preliminaries}. In particular, we recall basic facts about combinatorial classes and $\cR$-enriched trees, simply generated trees and local limit theorems for iid random variables. Subsequently, the proof of Theorem~\ref{thm:algo_runtime_n} is conducted in Section~\ref{sec:proof}. Finally, in Section~\ref{sec:applications} we explain in detail how the sampler for $\cR$-enriched trees can be used to obtain linear time samplers for the classes listed in Example~\ref{ex:examples_intro}. We emphasize that for most of the examples this is not 'just' an application of Algorithm~\ref{algo:R-enriched_tree} but a rather involved task leading to new efficient sampling procedures for the classes at hand.

\section{The Sampler}
\label{sec:algorithm}
In this section we present the sampler for $\mathsf{A}_n$ that is needed in the proof of Theorem~\ref{thm:algo_runtime_n}.
We briefly recall the  construction of a Bienaym\'e--Galton--Watson (BGW) tree with offspring distribution~$\xi$.
We start with a distinguished root to which a number of ordered children according to an independent copy of $\xi$ is appended.
Repeat this procedure for any node that has not received any children yet or the outcome of the copy of $\xi$ was $0$. The result is the arbitrarily sized \emph{unlabelled ordered} rooted tree $\mT$ such that the distribution of its vertex-degrees is $(p_k)_{k\ge0}$. The corresponding size-constrained tree is defined as $\mT_n := (\mT \mid \lvert\mT\rvert =n)$ for $n\in\ndN$. We use the notation that $v\in T$ is some node in the unlabelled ordered tree $T$. With this at hand, our sampler operates as follows.

\begin{algo} (Uniform $\cR$-enriched tree of size $n$.)
\label{algo:R-enriched_tree}
\begin{enumerate}
\item Generate the size-constrained Bienaym\'e--Galton--Watson tree $\mT_n$ with offspring distribution $\xi$ as in~\cite{MR2888318}.
\item For some $\cA(\rho_{\cA_\cR}) < t_0 <\rho_{\cR}$ repeatedly call for each $v\in \mT_n$ the sampler $\Gamma \cR(t_0)$ until it produces an object $\mR_v$ of size $d_{\mT_n}^+(v)$.
\item Distribute labels in $\{1,\dots,n\}$ uniformly at random and drop the ordering of the vertices afterwards.
\end{enumerate}
\end{algo}
The last step requires some explanation.
In Steps 1 and 2 we generate an object $(T, (R_v)_{v\in T})$, where $T$ is an unlabelled ordered rooted tree of size $n$ and $R_v \in \cR[[d^+(v)]]$ for $v\in T$.
The ordering of the offspring of $v\in T$ corresponds to a canonical labelling of the vertices, say $(v,1),\dots,(v,d^+(v))$ so that we may see the object $R_v$ as being labelled with elements from $\{(v,1),\dots,(v,d^+(v))\}$. 
Naming the root $o$, the children of the root are hence labelled $(o,1),\dots,(o,d^+(o))$, the children of the first child of the root by $((o,1),1), \dots, ((o,1), d^+(o,1))$ and so on.
In particular the labels are all distinct and by generating a uniform permutation of $[n]$ we may canonically relabel the entire object with labels in $[n]$.

\begin{remark}
Alternatively we could use in Step (2) of the algorithm an \emph{exact-size} sampler for obtaining objects from $\cR_{k}$ for $k=d^+_{\mathsf{T}_n}(v)$ that runs in time $e^{ck}$ for some (small) $c > 0$. This will not affect the asymptotic expected running time, as we will see later that typically the largest degree in $\mathsf{T}_n$ is in $O(\log n)$. However, we will not consider this setting further here.
\end{remark}

The next lemma guarantees that Algorithm~\ref{algo:R-enriched_tree} produces an uniform object of size $n$ from~$\cA_\cR$. 
\begin{lemma}
	\label{le:disenrichedtree}
In distribution $(\mT_n,(\mR_v)_{v\in \mT_n})= \mathsf{A}_n$. 
\end{lemma}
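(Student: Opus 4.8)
The plan is to show that the output of Algorithm~\ref{algo:R-enriched_tree}, namely the labelled unordered object obtained after Step~(3), has the uniform distribution on $\cA_\cR[[n]]$. I would argue in two stages. First I would analyze the distribution of the \emph{intermediate} object $(\mT_n, (\mR_v)_{v\in\mT_n})$ produced after Steps~(1) and~(2), which is an unlabelled ordered tree decorated with $\cR$-structures on the canonically labelled offspring sets. Then I would verify that the uniform relabelling-and-unordering in Step~(3) turns this into a uniform element of $\cA_\cR[[n]]$.

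For the first stage, recall from the construction that $\mT_n$ is a Bienaym\'e--Galton--Watson tree with offspring law $\xi$ conditioned on having $n$ vertices. For a fixed ordered tree $T$ with $n$ vertices the unconditioned BGW probability is $\prod_{v\in T} p_{d_T^+(v)}$, so conditioning gives $\Pr{\mT_n = T} \propto \prod_{v\in T} p_{d^+(v)}$. Substituting the explicit form \eqref{eq:def_offspring_distribution_R-enriched} of $p_k$, the factors $\cA_\cR(\rho_{\cA_\cR})^{d^+(v)}$ multiply to $\cA_\cR(\rho_{\cA_\cR})^{\sum_v d^+(v)} = \cA_\cR(\rho_{\cA_\cR})^{n-1}$, which is the same constant for every $T$ of size $n$; hence $\Pr{\mT_n = T} \propto \prod_{v\in T} r_{d^+(v)}/d^+(v)!$. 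Next I would observe that, conditionally on $\mT_n = T$, the rejection loop in Step~(2) produces for each $v$ an $\cR$-object $\mR_v$ distributed as $(\Gamma\cR(t_0) \mid |\Gamma\cR(t_0)| = d^+(v))$, which by the definition of the Boltzmann variable is \emph{uniform} over $\cR[[d^+(v)]]$, independently across $v$ (here I use that $\cA_\cR(\rho_{\cA_\cR}) < t_0 < \rho_\cR$ guarantees $\Gamma\cR(t_0)$ is well-defined and $\Pr{|\Gamma\cR(t_0)| = k} > 0$ whenever $r_k > 0$, so the loop terminates almost surely). Multiplying the conditional uniform weight $1/r_{d^+(v)}$ against the factor $r_{d^+(v)}/d^+(v)!$ from $\mT_n$, the dependence on $r$ cancels and I obtain
\[
	\Pr{(\mT_n,(\mR_v)_v) = (T,(R_v)_v)} \;\propto\; \prod_{v\in T} \frac{1}{d^+(v)!},
\]
a weight depending only on the degree profile of the ordered tree.

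For the second stage, I would fix a target unordered labelled enriched tree $A = (T',(R'_v)_{v\in T'}) \in \cA_\cR[[n]]$ and count the number of intermediate objects $(T,(R_v)_v)$ that Step~(3) can map to $A$, together with the number of label assignments realizing each such map. The canonical labelling described after the algorithm sets up a bijection between (ordered tree plus $\cR$-decoration on the canonically indexed offspring) and (the vertex-labelled, offspring-indexed data); applying a uniform permutation of $[n]$ and forgetting the order, each unordered labelled $A$ arises from exactly $\prod_{v} d^+(v)!$ distinct ordered-and-internally-labelled preimages weighted by the uniform permutation, where the product runs over the vertices of $A$. This combinatorial factor is precisely the reciprocal of the weight $\prod_v 1/d^+(v)!$ appearing above, so the two cancel and every $A \in \cA_\cR[[n]]$ receives the same total probability. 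This is the standard ``orbit-counting'' cancellation underlying the passage from ordered-labelled to unordered structures, as encoded by the relation \eqref{eq:relRAR} between $\cR(x)$ and $\cA_\cR(x)$.

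\textbf{The main obstacle} will be making the bookkeeping in the second stage fully rigorous: one must check carefully that the canonical labelling, the uniform permutation of $[n]$, and the subsequent forgetting of the ordering interact so that the $\prod_v d^+(v)!$ symmetry factors match exactly, with no vertex double-counted and no spurious automorphisms (here it matters that the labels are all distinct, so no stabilizer subtleties arise). A clean way to handle this is to set up an explicit measure-preserving bijection between the set of intermediate objects equipped with a uniform $[n]$-labelling and the set $\cA_\cR[[n]]$ equipped with its uniform measure, and to verify that the pushforward of the intermediate distribution under Step~(3) is exactly uniform; the weight computation above shows the Jacobian of this correspondence is constant, which is precisely the claim $(\mT_n,(\mR_v)_{v\in\mT_n}) = \mathsf{A}_n$ in distribution after relabelling.
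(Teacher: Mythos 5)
Your proposal is correct and follows essentially the same route the paper takes: the paper only sketches the argument in one line (combine the distribution of $\mT_n$ with the fact that a size-conditioned Boltzmann sampler is uniform) and defers the details to \cite[Lem.~6.1]{MR4132643}, and your two-stage computation --- the cancellation of the tilting factor $\cA_\cR(\rho_{\cA_\cR})^{n-1}$, the uniformity of $(\Gamma\cR(t_0)\mid |\Gamma\cR(t_0)|=d^+(v))$, and the matching of the $\prod_v d^+(v)!$ symmetry factors under relabelling --- fills in exactly those steps. No gaps.
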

The proof is rather straightforward (combine the distribution of $\mT_n$ with the fact that a Boltzmann sampler generates objects of a given size uniformly) and can be found in~\cite[Lem.~6.1]{MR4132643}. 
So, the proof of Theorem~\ref{thm:algo_runtime_n} boils down to validating that Algorithm~\ref{algo:R-enriched_tree} can be implemented to run in expected linear time.
This will be done in Section~\ref{sec:proof}.
Before we come to that, let us  first have a closer look at Devroye's algorithm~\cite{MR2888318} for sampling size-constrained trees, that is, Step~1 of Algorithm~\ref{algo:R-enriched_tree}. Any rooted ordered tree is uniquely determined by its outdegree sequence in breadth first search order and, on the other hand, any sequence $(d_1,\dots,d_n)$ such that $\sum_{1\le i\le n}d_i=n-1$ and $1+\sum_{1\le i\le t}(d_i-1)>0$ for all $1\le t\le n$ corresponds uniquely to such a tree. Define $S_t:=1+\sum_{1\le i \le t}(\xi_i-1)$ for $1\le t\le n$. The outdegrees of a Bienaym\'e--Galton--Watson trees are per definition given by the offspring distribution $\xi$, so that the random tree $\mT_n$ can be identified with the distribution of
\begin{align}
	\label{eq:condition_degree_sequence_GW}
	(\xi_1,\dots,\xi_n) ~~\bigg|~~ \left\{
	\sum_{1\le i\le n}\xi_i = n-1,
	S_t>0 \text{ for all }1\le t\le n-1 \right\}.
\end{align}
This fact, that we also shall exploit, is used in~\cite{MR2888318} to  generate efficiently $\mT_n$ as explained in the following algorithm. Recall that $p_k = \Pr{\xi = k}$.
\begin{algo}{(Size-constrained BGW tree $\mT_n$ with offspring distribution~$\xi$.)}
\label{algo:size-constrained-gw-tree-devroye}
\begin{enumerate}
    \item Sample the multinomial random vector $(N_0,N_1,\dots)$ with parameters $(n,p_0,p_1,\dots)$ repeatedly until $\sum_{1\le i\le n} iN_i = n-1$.
    \item Create a sequence of length $n$ populated with $N_j$ times $j$ for $0\le j\le n$.
    \item Randomly permute this sequence with each permutation equiprobable.
    \item Shift the elements of the sequence cyclically until the condition in~\eqref{eq:condition_degree_sequence_GW} is fulfilled.
\end{enumerate}
\end{algo}
For generating a multinomial vector in the first step \cite{MR2888318} proposes a sub-routine that samples binomial random variables.
\begin{algo}{(Multinomial random vector $(N_0,N_1,\dots)$ with parameters $(n,p_0,p_1,\dots)$.)}
\label{algo:multinomial}
\begin{enumerate}
    \item Let $N_0=\mathrm{Bin}(n,p_0)$.
    \item For $i \ge 1$, if $\sum_{0\le j\le i-1}N_j<n$, let $N_i = \mathrm{Bin}(n-\sum_{0\le j\le i-1}N_j,p_i/(1-\sum_{0\le j\le i-1}p_j)$ and otherwise set $N_i=0$.
\end{enumerate}
\end{algo}
In \cite{MR2888318} it is established that under the RAM model of computation the expected number of steps taken by Algorithm~\ref{algo:size-constrained-gw-tree-devroye} is $O(n)$, provided that $\xi$ has finite variance and that it takes one step to generate an independent copy of $\xi$. Our setting, however, is slightly different, as we do not a priori know the entire vector $(p_0,p_1,\dots)$. We need to incorporate the time it takes to compute this vector into the runtime of Step 1 of Algorithm~\ref{algo:R-enriched_tree}. Conveniently, it is sufficient to consider $(p_0,p_1,\dots)$ truncated at
$
    K := \max_{1\le i\le n}\xi_i,
$
the step at which $N_j=0$ for all $j>K$ in Algorithm~\ref{algo:multinomial} and hence the point in time after which the precise value of $p_j$ for $j>K$ is not needed anymore.
Since $\xi$ has finite exponential moments we will essentially obtain that $K = O(\log n)$ and  together with~\ref{assumption3} this will not spoil the overall linear runtime.
\begin{lemma}
\label{lem:devroye_lin_time}
If $\cA_\cR$ is tame, Algorithm~\ref{algo:size-constrained-gw-tree-devroye} can be implemented to have an expected running time of  $O(n)$.
\end{lemma}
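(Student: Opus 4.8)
The plan is to reduce the statement to Devroye's runtime analysis \cite{MR2888318} and then bound separately the only extra cost incurred in our setting, namely the on-the-fly computation of the offspring probabilities $(p_k)_{k\ge0}$.

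First I would record that $\xi$ satisfies Devroye's hypotheses. Identifying the singularity of $\cA_\cR$ as a branch point of the equation $\cA_\cR(x) = x\cR(\cA_\cR(x))$ gives $\rho_{\cA_\cR}\cR'(\cA_\cR(\rho_{\cA_\cR})) = 1$, and combined with $\cA_\cR(\rho_{\cA_\cR}) = \rho_{\cA_\cR}\cR(\cA_\cR(\rho_{\cA_\cR}))$ this yields $\Ex{\xi} = \cA_\cR(\rho_{\cA_\cR})\cR'(\cA_\cR(\rho_{\cA_\cR}))/\cR(\cA_\cR(\rho_{\cA_\cR})) = 1$, so $\xi$ is critical. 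Subcriticality~\ref{assumption1} gives $\cA_\cR(\rho_{\cA_\cR}) < \rho_\cR$, whence $\Ex{\mathrm{e}^{s\xi}} = \cR(\mathrm{e}^{s}\cA_\cR(\rho_{\cA_\cR}))/\cR(\cA_\cR(\rho_{\cA_\cR}))$ is finite for every $s$ with $\mathrm{e}^{s}\cA_\cR(\rho_{\cA_\cR}) < \rho_\cR$; in particular $\xi$ has finite exponential moments and thus finite variance (this is Lemma~\ref{lem:xi_finite_exp_mom}). Consequently Devroye's theorem applies and guarantees that, in the idealized model where the parameter of each binomial in Algorithm~\ref{algo:multinomial} is available at unit cost, Algorithms~\ref{algo:size-constrained-gw-tree-devroye}--\ref{algo:multinomial} terminate after $O(n)$ expected steps; this already absorbs the $M$ rejected multinomial trials, whose number satisfies $\Ex{M} = \Theta(\sqrt n)$ by the local limit estimate $\Pr{\sum_{i\le n}\xi_i = n-1} = \Theta(n^{-1/2})$.

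It then remains to account for producing the $(p_k)$. I would maintain the running products $\cA_\cR(\rho_{\cA_\cR})^k/k!$ and the partial sums $\sum_{j<k}p_j$ incrementally, so that the marginal cost of a new index $k$ is, by~\ref{assumption3}, dominated by the $\mathrm{e}^{o(k)}$ steps needed to obtain $r_k$; write $c_k = \mathrm{e}^{o(k)}$ for this per-index cost. In Algorithm~\ref{algo:multinomial} the values $p_j$ with $j > K$, where $K$ is the largest index with $N_K>0$, are never queried, and since $(p_k)$ is a fixed sequence we compute and cache each $p_k$ at most once across all trials. Hence the total overhead is $\sum_{k=0}^{K_{\max}}c_k$, where $K_{\max}$ is the largest degree occurring in any accepted or rejected trial.

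The heart of the matter is to show $\Ex{\sum_{k=0}^{K_{\max}}c_k} = o(n)$, i.e.\ that the super-exponentially mild growth of $c_k$ is crushed by the exponential tail of $\xi$. The exponential moment bound provides constants $C,c>0$ with $\Pr{\xi\ge k}\le C\mathrm{e}^{-ck}$, and since $K_{\max}$ is the maximum of the degrees over all $M$ trials, each involving $n$ independent copies of $\xi$, conditioning on $M$ and applying a union bound gives $\Pr{K_{\max}\ge k}\le \Ex{M}\, n\,\Pr{\xi\ge k}\le Cn^{3/2}\mathrm{e}^{-ck}$. Writing $c_k\le C_\epsilon\mathrm{e}^{\epsilon k}$ for arbitrary $\epsilon>0$ we get $\sum_{k\le K_{\max}}c_k = O(\mathrm{e}^{\epsilon K_{\max}})$, and splitting $\sum_{k}\mathrm{e}^{\epsilon k}\Pr{K_{\max}\ge k}$ at $k^\ast = \tfrac{2}{c}\log n$ yields $\Ex{\mathrm{e}^{\epsilon K_{\max}}} = O(n^{2\epsilon/c})$ once $0<\epsilon<c$. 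Choosing $\epsilon$ small enough makes this $n^{o(1)}$, so the probability-computation overhead is $n^{o(1)} = o(n)$ in expectation, and adding it to Devroye's $O(n)$ bound gives the claim. I expect the main obstacle to be making this last tail estimate fully rigorous, in particular handling the dependence between the random number of trials $M$ and the per-trial maxima; the caching reduction to the single quantity $K_{\max}$, together with the crude union bound over the $Mn$ copies of $\xi$, is the cleanest route around it.
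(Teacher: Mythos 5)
Your proposal is correct and its overall skeleton matches the paper's: invoke Devroye's $O(n)$ analysis for the idealized model where the $p_k$ are free, then bound the extra cost of computing them using the exponential tail of $\xi$ guaranteed by subcriticality (Lemma~\ref{lem:xi_finite_exp_mom}). Where you genuinely diverge is in the accounting and in the key tail estimate. The paper does not cache: it lets $K=\max_{1\le i\le n}\xi_i$ be the largest degree in a \emph{single} multinomial trial, charges the computation cost $X=\Exb{\sum_{i\le K}\mathrm{e}^{f_i i}}$ to every one of the $\varphi_n^{-1}=O(\sqrt{n})$ expected rejections, and therefore needs only $X=O(\sqrt n)$; it gets $X\le C\,\Ex{\mathrm{e}^{\eps K}}=o(\sqrt n)$ via the Log-Sum-Exp bound $K\le \log(\mathrm{e}^{t\xi_1}+\cdots+\mathrm{e}^{t\xi_n})/t$ followed by Jensen with the concave map $x\mapsto x^{\eps/t}$. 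You instead amortize by caching each $p_k$ once, which forces you to control $K_{\max}$ over all $M$ trials; you handle the dependence between $M$ and the per-trial maxima by the Wald-type union bound $\Prb{K_{\max}\ge k}\le \Ex{M}\,n\,\Prb{\xi\ge k}$ (valid since $\{M\ge m\}$ depends only on trials $1,\dots,m-1$) and a split of the resulting sum at $k^\ast=\tfrac2c\log n$. Your route yields the sharper overhead bound $n^{o(1)}$ versus the paper's $o(n)$, at the price of the slightly more delicate stopping-time/union-bound step, while the paper's Log-Sum-Exp argument avoids reasoning about $K_{\max}$ and $M$ altogether. Both are complete proofs; also note that your branch-point derivation of $\Ex{\xi}=1$ reproves what the paper imports from Janson's simply generated tree results, so you could simply cite Lemma~\ref{lem:xi_finite_exp_mom} there as you already do.
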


\section{Preliminaries}
\label{sec:preliminaries}

\subsection{Combinatorial classes}
\label{ssec:classes}

In this section we recall some background information about combinatorial classes.
A comprehensive survey of the theory is given in the excellent books \cite{MR2483235,MR1629341}.
As already said, a combinatorial class is given by a countable set $\cC$ equipped with a size function $\lvert\cdot\rvert :\cC\to\ndN_0$ such that $\cC_n := \lvert \{ C\in\cC : \lvert C\rvert = n\}\rvert$ is finite for all $n\in\ndN_0$. Elements of $\cC$ are called objects or structures and any object in $\cC_n$ is said to be comprised of $n$ atoms or to be of size $n$.  We call $\cC$ labelled if each atom of an object in $\cC$ bears a distinct label.
For convenience, let the label set of any $C\in\cC_n$ be given by $[n]$.
If we want to stress out a different labelling, we simply write $C\in\cC[U]$ describing an object $C\in\cC_{\lvert U\rvert}$ labelled according to the finite set $U$. For any bijection $\sigma:U\to V$ between finite sets $U,V$ and $C\in\cC[U]$ we write $\sigma.C$ for the object obtained by replacing the label $\ell_v\in U$ of each atom $v$ of $C$ by $\sigma(\ell_v)\in V$. Clearly the resulting object is in $\cC[V]$. For coherence we assume that $C\in\cC[U]$ implies that $\sigma.C\in\cC[U]$ for any bijection $\sigma:U\to U$. The (exponential) generating series of $\cC$ is the formal power series defined by
\[
	\cC(x)
	:= \sum_{n\ge 0} \lvert\cC_n\rvert \frac{x^n}{n!}.
\]
\begin{example}
\label{ex:cayley_trees}
Consider the class $\cA$ of rooted labelled unordered acyclic connected graphs, in short Cayley trees.
We see in Figure~\ref{fi:tree_relabel} some $T\in\cA_5$ with labels in $[5]$.
Applying the bijection $\sigma$ mapping $1 \mapsto a$, $2 \mapsto b$ and so on, we retrieve an object $\sigma.T$ in $\cA[\{a,b,c,d,e\}]$.
\begin{figure}[h]
\centering
  \def\svgwidth{0.6\columnwidth}
	\resizebox{0.6\textwidth}{!}{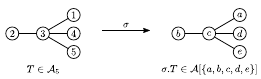}
		\caption{Relabelling $T$ under the bijection $\sigma$.}
		\label{fi:tree_relabel}
\end{figure}
\end{example}

\subsubsection{Basic Classes}
The basic combinatorial classes are the empty class $\emptyset$, the atomic class $\cX$, the set class  $\Set$, the cycle class $\Cyc$ and the sequence class $\Seq$. Let $\cS_n$ be the symmetric group containing all permutations of $[n]$ and write $(a_1,\dots,a_n)\simeq(b_1,\dots,b_n)$ if one of the two sequences is obtained by cyclically shifting the indices of the other. Then the basic classes are defined by, letting $n\in\ndN_0$,

\begin{itemize}
\item $\lvert\emptyset_n\rvert = \mathbbm{1}_{n=0}$, 
\item $\lvert\cX_n\rvert = \mathbbm{1}_{n=1}$,
\item $\Set_n =\{\{1,\dots,n\}\}$,
\item $\Cyc_n = \{(\sigma(1),\dots,\sigma(n))_\simeq: \sigma\in\cS_n\}$ and
\item $\Seq_n = \{(\sigma(1),\dots,\sigma(n)) : \sigma\in\cS_n\}$.
\end{itemize}
The respective generating series are computed to be
\[
	\emptyset(x) = 1,
	\quad
	\cX(x) = x,
	\quad
	\Set(x) = \exp(x),
	\quad
	\Cyc(x) = \log \frac{1}{1-x}
	\quad\text{and}\quad
	\Seq(x) = \frac{1}{1-x}.
\]

\subsubsection{Constructions}

Given classes $\cA$ and $\cB$ there are several ways to construct more complex classes.

\subsubsection*{Pointing}
The collection of elements $(A,a)$ where $A\in\cA$ and $a$ is a distinguished atom (the root) in $A$ forms the pointed class $\cA^\bullet$. Since $\lvert\cA^\bullet_n\rvert = n \lvert\cA_n\rvert$ we obtain
\[
    \cA^\bullet(x)
    = x \frac{\partial}{\partial x} A(x).
\]
\subsubsection*{Derivation}
Let $\star$ denote a special label such that any atom bearing this label does not contribute to the total size of the object at hand. The derived class $\cA'$ is given by the collection of objects in $\cA$ where the largest label is replaced by $\star$, i.e. $\cA'_{n-1} := \cA[\{1,\dots,n-1,\star\}]$ for all $n\in\ndN$. Hence the generating series is computed to be
\[
    \cA'(x)
    = \frac{\partial}{\partial x}\cA(x).
\]
\subsubsection*{Disjoint union} 
If $\cA$ and $\cB$ are disjoint then the disjoint union $\cA + \cB$ is the union $\cA\cup\cB$ in the standard set-theoretic sense. Formally and to avoid the assumption that the two classes at hand are disjoint we introduce two disjoint sets, say $\{0\}$ and $\{1\}$, and set
\[
    \cA+\cB
    := (\{0\}\times \cA)\cup (\{1\}\times\cB).
\]
The size of an object in $\cA+\cB$ is the size of the respective object in $\cA$ or $\cB$. It is straightforward that
\[
	(\cA+\cB)(x) = \cA(x)+\cB(x).
\]
\subsubsection*{Product} 
The product class $\cA\cdot\cB$ contains all tuples $(A,B)$ with $A\in\cA$ and $B\in\cB$ relabelled with labels in $\{1,\dots,\lvert A\rvert + \lvert B\rvert\}$. The size function is defined by $\lvert(A,B)\rvert = \lvert A\rvert + \lvert B\rvert$. The generating series is
\[
	(\cA\cdot\cB)(x) = \cA(x)\cB(x).
\]
\subsubsection*{Substitution}
For this construction we assume $\cB_0 = \emptyset$. An object in the substitution class $\cA\circ\cB$, sometimes also $\cA(\cB)$, is comprised of an $\cA$-object whose atoms are replaced by $\cB$-objects.
In other words, $\cA\circ\cB$ contains all equivalence classes of sequences of the form
\[
	(A, B_1, \dots, B_k)_\simeq,
	\quad A\in\cA[\{P_1,\dots,P_k\}], B_i \in \cB[P_i], 1\le i\le k,
\]
where $k\ge 0$ and $P_1,\dots ,P_k$ is a partition of $\{1,\dots,\sum_{1\le i\le k}\lvert B_i\rvert\}$ with $\lvert P_i\rvert = \lvert B_i\rvert$ for $1\le i\le k$. The equivalence relation ``$\simeq$" terms two sequences $(A, B_1, \dots, B_k)$ and $(A', B_1', \dots, B_k')$ isomorphic if $A=A'$ and for any permutation $\sigma:\{1,\dots,k\} \to \{1,\dots,k\}$ such that $\sigma.A = A$ it holds that $B_{\sigma(i)} = B_i'$ for $1\le i\le k$. Hence any $M\in \cA\circ\cB$ possesses a core structure $A$ and \emph{components} $B_1,\dots,B_k$. The size is then given by $\lvert M\rvert := \sum_{1\le i\le k}\lvert B_i\rvert$. The respective generating series fulfils
\[
	(\cA\circ\cB)(x)
	= \cA(\cB(x)).
\]
\subsubsection{$\cR$-enriched trees}
The theory of this subsection is extensively treated in~\cite[Ch.~3]{MR1629341}. Let $\cR$ be a combinatorial class and $\cA$ the class of rooted Cayley trees. As we already defined in Section~\ref{subsec:main_result}, the class of $\cR$-enriched trees $\cA_{\cR}$ contains all sequences
\[
	(T, (R_v)_{v\in T}),
	\qquad T \in \cA, R_v \in\cR[P_v], v\in T, 
\]
where $P_v$ is the label set of the offspring of vertex $v\in T$ and the sequence $(R_v)_{v\in T}$ is canonically ordered, for example in breath first search appearance of $v\in T$. Further, recall that the size of any $A=(T, (R_v)_{v\in T})\in\cA_\cR$ is defined as $\lvert A\rvert = \lvert T\rvert$.
As any $\cR$-enriched tree is comprised of a root to which an $\cR$-structure is attached whose atoms are replaced by $\cR$-enriched trees, we obtain the functional equation
\[
	\cA_{\cR}
	= \cX \cdot \cR(\cA_{\cR}),
\]
see Theorem~2 in~\cite[Ch.~3]{MR1629341}.
This is a combination of the product and substitution construction and so the generating series satisfies the equation 
\begin{align}
\label{eq:gen_series_r-enriched}
	\cA_{\cR}(x)
	= x\cR(\cA_{\cR}(x)).
\end{align}
\begin{example}
\label{ex:plane_ordered_from_enriched}
Letting $\cR$ be one of the basic combinatorial classes, we retrieve basic models of trees, see also Figure~\ref{fi:R_seq_cyc}. To wit:
\begin{itemize}
    \item By choosing $\cR = \Set$ no additional structure is imposed on the offspring set of each vertex, so that we obtain that $\cA_{\Set} = \cA$, the class of rooted Cayley trees.
    \item When $\cR = \Seq$ the offspring of each vertex is given an ordering and we obtain that $\cA_{\Seq} = \cT$, the class of rooted labelled ordered trees.
    \item By cyclically ordering the offspring of each vertex (that is, $\cR=\Cyc$) we obtain the class $\cA_{\Cyc} = \cP$ of rooted labelled plane trees.
\end{itemize}
\begin{figure}[h]
\centering
  \def\svgwidth{0.6\columnwidth}
	\resizebox{0.8\textwidth}{!}{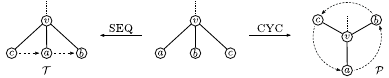}
		\caption{An $\cR$-enriched tree observed locally at some node $v$ and its offspring labelled by $\{a,b,c\}$ paired with the $\cR$-object $R_v$, where  $R_v = (b,a,c)_\simeq \in\Cyc[\{a,b,c\}]$ or $(c,b,a)\in\Seq[\{a,b,c\}]$.}
		\label{fi:R_seq_cyc}
\end{figure}
\end{example}


\subsection{Simply generated trees}
\label{subsec:simply_generated_trees}
In the following we recall results concerning simply generated trees discussed thoroughly in~\cite[Ch.~7 and 8]{MR2908619}. 
We will use these results to study the offspring distribution defined in~\eqref{eq:def_offspring_distribution_R-enriched}.
Denote the class of \emph{rooted unlabelled ordered trees} by $\widetilde{\cT}$ that is obtained by taking equivalence classes under relabelling in~$\cT$, the labelled version shown in Example~\ref{ex:plane_ordered_from_enriched}.
Let $\mathrm{\omega}=(\omega_k)_{k\ge 0}$ be a real-valued non-negative sequence and set $\Phi(x):=\sum_{k\ge 0}\omega_kx^k$. For any given finite ordered tree $T$ in $\widetilde{\cT}$ define its weight by
\[
	\omega(T)
	:= \prod_{v\in T} \omega_{d^+(v)}.
\]
Define by $\mT_n(\omega)$ the $n$-sized random tree with distribution
\begin{align}
\label{eq:T_n_simply_generated}
	\Pr{\mT_n(\omega) = T}
	= \frac{\omega(T)}{Z_n},
	\quad\text{ where } T\in\widetilde{\cT}_n, ~ Z_n
	= \sum_{T'\in\widetilde{\cT}_n}\omega(T').
\end{align}
We only consider values for $n$ such that $Z_n$, the so-called partition function, is strictly greater than zero. In the next lemma we gather important properties of the generating function of the sequence of partition functions $Z(x):=\sum_{n\ge 1}Z_nx^n$. 
\begin{lemma}[{\cite[Rem.~3.2, Thm.~7.1, Rem.~7.5]{MR2908619}}]
\label{lem:roc_Z_tau}
The generating function of the partition functions satisfies the relation
\[
    Z(x) = x\Phi(Z(x)).
\]
If in addition $\omega$ is such that $\omega_0>0$ and $\omega_k>0$ for some $k\ge 2$, then $Z(x)$ has radius of convergence 
\begin{align*}
	\rho_Z = \frac{\tau}{\Phi(\tau)},
	\quad\text{where}\quad
	\tau=Z(\rho_Z) \in (0, \infty), \Phi(\tau)\in(0,\infty). 
\end{align*}
\end{lemma}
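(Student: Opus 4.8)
The plan is to treat the two assertions separately: the functional equation is purely combinatorial, while the location of the singularity is a matter of analysing the implicit equation it defines.

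\textbf{The functional equation.} First I would establish $Z(x) = x\Phi(Z(x))$ by the symbolic method. Every nonempty tree $T \in \widetilde{\cT}$ decomposes uniquely into its root $o$, of some outdegree $k = d^+(o) \ge 0$, together with the ordered sequence $(T_1,\dots,T_k)$ of fringe subtrees hanging from $o$. Since the weight factorizes as $\omega(T) = \omega_k\prod_{i=1}^k \omega(T_i)$ and the sizes satisfy $\lvert T\rvert = 1 + \sum_i \lvert T_i\rvert$, summing over $k$ and over all ordered forests of subtrees yields $Z(x) = x\sum_{k\ge 0}\omega_k Z(x)^k = x\Phi(Z(x))$, the factor $x$ accounting for the root vertex. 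Equivalently one verifies directly that the coefficients $Z_n$ obey the convolution recursion induced by this decomposition, with base case $Z_1 = \omega_0$; this part is routine once the decomposition is written down.

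\textbf{Reduction to an inversion problem.} For the second assertion I would use that, since all $Z_n \ge 0$, the series $Z$ is nondecreasing on $[0,\rho_Z)$ and admits a limit $\tau := \lim_{x\to\rho_Z^-} Z(x) \in (0,\infty]$, which is positive because $Z_1 = \omega_0 > 0$. Let $\rho_\Phi$ denote the radius of convergence of $\Phi$. Rearranging the functional equation gives $x = Z(x)/\Phi(Z(x))$, so $Z$ is the inverse of the function $\psi(t) := t/\Phi(t)$, which I would study directly on $[0,\rho_\Phi)$. Here $\psi(0)=0$ (using $\Phi(0)=\omega_0>0$) and $\psi'(t) = g(t)/\Phi(t)^2$ with $g(t) := \Phi(t) - t\Phi'(t) = \omega_0 - \sum_{k\ge 2}(k-1)\omega_k t^k$. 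The nontriviality hypotheses enter precisely here: $g(0) = \omega_0 > 0$ makes $\psi$ increasing near $0$, while the existence of some $k\ge 2$ with $\omega_k>0$ makes $g$ strictly decreasing on $(0,\rho_\Phi)$, so $g$, and hence $\psi'$, changes sign at most once.

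\textbf{Locating $\tau$ and concluding.} Consequently $\psi$ has a unique maximum on $[0,\rho_\Phi]$, attained at the point $\tau$ defined as the zero of $g$ if one lies in $(0,\rho_\Phi)$ and as $\rho_\Phi$ otherwise. I would then argue, via the inverse-function correspondence, that $Z$ extends analytically and increasingly exactly as long as $\psi'(Z)\neq 0$, whence $\rho_Z = \psi(\tau) = \tau/\Phi(\tau)$ and $Z(\rho_Z)=\tau$. Finiteness is then checked in two regimes: if $\rho_\Phi=\infty$ the sum $\sum_{k\ge 2}(k-1)\omega_k t^k$ already diverges as $t\to\infty$, forcing the zero $\tau$ to be finite; if $\rho_\Phi<\infty$ then $\tau\le\rho_\Phi<\infty$ is immediate, and in the boundary case $g\ge 0$ on $(0,\rho_\Phi)$ bounds $\sum_{k\ge 2}(k-1)\omega_k t^k \le \omega_0$, so letting $t\to\rho_\Phi$ yields $\Phi(\tau)=\Phi(\rho_\Phi)<\infty$. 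Since $\Phi(\tau)\ge\omega_0>0$ always, this gives $\tau,\Phi(\tau)\in(0,\infty)$.

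I expect the main obstacle to be the rigorous justification that $\rho_Z$ \emph{coincides} with $\psi(\tau)$ rather than merely being bounded by it: one must rule out any analytic continuation of $Z$ past the maximum of $\psi$, and correctly handle the boundary case $\tau=\rho_\Phi$, where the singularity of $Z$ is inherited from that of $\Phi$ instead of being produced by the vanishing of $\psi'$ (a square-root singularity in the interior case). This is exactly the content of the smooth-implicit-function and inversion analysis underlying Janson's Theorem~7.1, which a fully careful treatment would invoke.
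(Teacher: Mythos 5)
The paper offers no proof of this lemma at all: it is imported verbatim from Janson's survey \cite{MR2908619} (Remark~3.2 for the functional equation, Theorem~7.1 and Remark~7.5 for the singularity), so there is no internal argument to compare against, and what you have written is essentially a reconstruction of Janson's proof. Your sketch is sound and follows the standard route: the root decomposition gives $Z(x)=x\Phi(Z(x))$; your $\tau$, defined via the zero of $g(t)=\Phi(t)-t\Phi'(t)$, is exactly Janson's solution of $\Psi(\tau)=1$ (with $\tau=\rho_\Phi$ in the boundary case); and your finiteness arguments for $\tau$ and $\Phi(\tau)$ in the two regimes $\rho_\Phi=\infty$ and $\tau=\rho_\Phi<\infty$ are precisely the ones needed. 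One remark on the step you flag as the main obstacle: ruling out continuation of $Z$ past $\psi(\tau)$ is in fact the easy direction, since for every $x<\rho_Z$ the coefficient recursion and monotone convergence give the pointwise identity $x=\psi(Z(x))$ with $\Phi(Z(x))<\infty$, whence $\rho_Z\le\sup_{[0,\rho_\Phi]}\psi=\psi(\tau)$ directly; the genuinely fiddly direction is the matching lower bound $\rho_Z\ge\psi(\tau)$, which needs the analytic inverse of $\psi$ on $[0,\tau)$, uniqueness of the solution branch through the origin, and Pringsheim's theorem to identify the continuation with the sum of the series. Two minor caveats: positivity of $\tau$ silently presupposes $\rho_Z>0$, equivalently $\rho_\Phi>0$ (for $\rho_\Phi=0$ the conclusion $\tau\in(0,\infty)$ fails, a degenerate case the quoted statement also glosses over but which is irrelevant here since subcriticality forces $\rho_{\cR}>0$); and in the interior case one should note that $\psi$ attains its maximum only once on $[0,\rho_\Phi]$ because $g$ is strictly decreasing there, which is exactly where the hypothesis $\omega_k>0$ for some $k\ge2$ enters, as you correctly observe.
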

If $\omega$ is a probability weight sequence we readily notice that $\mT_n(\omega)$ defined in~\eqref{eq:T_n_simply_generated} is just the size-constrained Bienaym\'e--Galton--Watson tree with offspring distribution $\omega$.
Thus, simply generated trees are a generalization of Bienaym\'e--Galton--Watson trees. However, as we will see in a moment, using a technique called \emph{tilting}, it is often possible to view $\mT_n(\omega)$ as a Bienaym\'e--Galton--Watson tree even if $\omega$ is not a probability sequence.
We actually claim even more: there are cases -- in particular the ones that we consider here -- where it is possible to transform $\omega$ to a probability sequence such that simultaneously the distribution of the underlying simply generated tree is not altered and in addition the offspring distribution is critical, meaning that the mean is $1$ and of finite variance. More specifically, letting $\tau$ be as in Lemma~\ref{lem:roc_Z_tau}, the candidate for the offspring distribution is the \emph{tilted} sequence
\begin{align}
\label{eq:pi_k_prob_seq}
	\pi_k
	:= \frac{\omega_k\tau^k}{\Phi(\tau)},
	\quad
	k \in\mathbb{N}_0.
\end{align}
Note that in general this is not necessarily a probability sequence (for example, if $\tau = \infty$). In order to establish conditions under which $(\pi_k)_{k\ge 1}$ is a probability sequence we need some more notation.
Denote the radius of convergence of $\Phi(x)$ by $\rho$. Set
\[
	\Psi(x)
	:= \frac{x\Phi'(x)}{\Phi(x)}
	\qquad\text{and}\qquad
	\nu 
	:= \lim_{x\to\rho} \Psi(x)
	\in (0,\infty].
\]
\begin{lemma}[{\cite[Thm.~7.1, Rem.~7.5 and Ch.~8 Ia]{MR2908619}}]
\label{lem:pi_k_fin_exp_moments}
Let $\omega$ be a weight sequence such that $\omega_0>0$ and $\omega_k>0$ for some $k\ge 2$. If in addition $0<\tau<\rho$ (case $(Ia)$ in \cite{MR2908619}) then
$(\pi_k)_{k\ge 0}$ is a probability sequence with mean $1$ and finite exponential moments.
In particular we have that $\rho_Z$, $Z(\rho_Z)$ and $\Phi(Z(\rho_Z))$ are finite.
\end{lemma}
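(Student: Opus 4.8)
The plan is to establish the three asserted properties of $(\pi_k)_{k\ge0}$ separately and then read off the finiteness statement as a byproduct. The only genuinely substantial point will be the criticality $\Psi(\tau)=1$; the remaining claims reduce to elementary power-series manipulations once we exploit that $\tau$ lies strictly inside the disc of convergence of $\Phi$.

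First I would record the two easy properties. Since $0<\tau<\rho$, the series $\Phi(\tau)=\sum_{k\ge0}\omega_k\tau^k$ converges, and it satisfies $\Phi(\tau)\ge\omega_0>0$ by hypothesis; hence each $\pi_k=\omega_k\tau^k/\Phi(\tau)$ is a well-defined nonnegative number and
\[
\sum_{k\ge0}\pi_k=\frac{1}{\Phi(\tau)}\sum_{k\ge0}\omega_k\tau^k=\frac{\Phi(\tau)}{\Phi(\tau)}=1,
\]
so $(\pi_k)$ is a probability sequence. Differentiating $\Phi$ term by term inside its disc of convergence then gives
\[
\sum_{k\ge0}k\pi_k=\frac{\tau}{\Phi(\tau)}\sum_{k\ge0}k\omega_k\tau^{k-1}=\frac{\tau\Phi'(\tau)}{\Phi(\tau)}=\Psi(\tau),
\]
so proving that the mean equals $1$ amounts exactly to showing $\Psi(\tau)=1$.

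For this crux I would pass to the inverse function. Writing $\psi(y):=y/\Phi(y)$, the relation $Z(x)=x\Phi(Z(x))$ from Lemma~\ref{lem:roc_Z_tau} reads $x=\psi(Z(x))$, i.e.\ $Z=\psi^{-1}$ near the origin, and a direct computation gives $\psi'(y)=(1-\Psi(y))/\Phi(y)$ on $[0,\rho)$. Now $Z$ has nonnegative coefficients, so by Pringsheim's theorem the point $x=\rho_Z$ is a singularity of $Z$, while $Z(\rho_Z)=\tau<\rho$ by hypothesis. If we had $\psi'(\tau)\ne0$, then $\psi$ would be analytic and locally invertible near $\tau$, and the analytic inverse function theorem would continue $Z=\psi^{-1}$ analytically past $\rho_Z=\psi(\tau)$, contradicting the singularity. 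Hence $\psi'(\tau)=0$, and since $\Phi(\tau)>0$ this forces $\Psi(\tau)=1$, which is precisely the internal branch-point mechanism defining case $(Ia)$ of~\cite{MR2908619}. I expect this step --- arguing that the singularity at $\rho_Z$ must be an internal branch point rather than inherited from $\Phi$ --- to be the main obstacle, and the place where the strict inequality $\tau<\rho$ is essential. (A size-biasing computation showing $y\Psi'(y)$ equals the variance of the law proportional to $\omega_k y^k$, hence $\Psi$ is nondecreasing, additionally gives uniqueness of such a $\tau$, but is not needed for the identity itself.)

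Finally, for the exponential moments I would note that for $s\ge0$ one has $\sum_{k\ge0}\pi_k e^{sk}=\Phi(\tau e^{s})/\Phi(\tau)$, which is finite whenever $\tau e^{s}<\rho$; since $\tau<\rho$ strictly, every $s$ with $0<s<\log(\rho/\tau)$ works (and all $s\ge0$ work if $\rho=\infty$), yielding finite exponential moments. The concluding ``in particular'' is then immediate: $\tau<\infty$ by assumption, $\Phi(\tau)<\infty$ as already used, and therefore $\rho_Z=\tau/\Phi(\tau)$ and $\Phi(Z(\rho_Z))=\Phi(\tau)$ are all finite and positive.
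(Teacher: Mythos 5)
Your proof is correct, but it is worth noting that the paper does not prove this lemma at all: it is imported verbatim from Janson's survey (Thm.~7.1, Rem.~7.5 and Ch.~8, case~(Ia) of the cited reference), so you are supplying a self-contained argument where the paper offers only a citation. Your route is the standard branch-point mechanism from the smooth implicit-function schema: writing $x=\psi(Z(x))$ with $\psi(y)=y/\Phi(y)$ and using Pringsheim plus the analytic inverse function theorem to force $\psi'(\tau)=0$, hence $\Psi(\tau)=1$. Janson's own development runs the implication in the opposite direction --- he \emph{defines} $\tau$ as the root of $\Psi(t)=1$ in $[0,\rho]$ (when $\nu\ge 1$) and then proves $Z(\rho_Z)=\tau$ --- whereas you start from $\tau:=Z(\rho_Z)$ as the paper does and derive criticality from the hypothesis $\tau<\rho$; the two are equivalent here, and your direction is arguably the more natural one given how the paper sets things up. The remaining pieces (normalization, the identity $\sum_k k\pi_k=\Psi(\tau)$, exponential moments via $\sum_k\pi_k e^{sk}=\Phi(\tau e^s)/\Phi(\tau)$ for $\tau e^s<\rho$) are exactly the elementary manipulations one would expect. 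Do note that your argument still quietly leans on Lemma~\ref{lem:roc_Z_tau} for $\rho_Z=\tau/\Phi(\tau)\in(0,\infty)$ and the left-continuity $Z(\rho_Z)=\lim_{x\uparrow\rho_Z}Z(x)$, so the proof is self-contained only modulo that (also cited) lemma; this is acceptable since the paper makes the same assumption available.
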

The other ranges for $\tau$ can be treated as well, see~\cite[Ch.~8]{MR2908619}, but that is not needed here.

\subsection{Local limit theorems}
Let in this subsection $\xi$ be a non-negative integer-valued random variable. Further let $\xi_1,\xi_2,\dots$ be iid copies of $\xi$ and set $\Xi_n:=\xi_1+\cdots+\xi_n$ for $n\in\ndN$. Define the span $d$ by
\[
	d
 	:= \max\{d\ge 1: d\mid i\text{ whenever }\Prb{\xi=i}>0\}.
\] 
The following local central limit theorem holds for any distribution with finite variance, so in particular in our intended application. 
\begin{lemma}[{\cite[Lem.~4.1 and Rem.~14.2]{MR2908619}}]
\label{lem:local_central_limit_theorem}
If $\Ex{\xi}=1$ and $\Va{\xi}<\infty$
\[
	\Prb{\Xi_n = n-1}
	\sim \frac{d}{\sqrt{2\pi\sigma^2n}}
	\quad \text{ as }n\to\infty\quad\text{and }n\text{ divisible by }d.
\]
Furthermore,
$$	\Prb{\Xi_n = m}
	\le \frac{d+o(1)}{\sqrt{2\pi\sigma^2 n}},
	\quad m\in\ndN.$$
\end{lemma}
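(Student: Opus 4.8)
The plan is to establish both statements by the classical characteristic-function (Fourier inversion) route to local limit theorems. Write $\sigma^2 := \Va{\xi}$ and let $\varphi(t) := \Exb{e^{\mathrm{i}t\xi}}$, so that $\Xi_n$ has characteristic function $\varphi(t)^n$. Since $\Xi_n$ is integer-valued, Fourier inversion over one period gives, for every $m\in\ndZ$,
\[
	\Prb{\Xi_n = m}
	= \frac{1}{2\pi}\int_{-\pi}^{\pi} \varphi(t)^n e^{-\mathrm{i}tm}\,dt .
\]
Because $\Exb{\xi}=1$, it is convenient to recenter: setting $\psi(t):=e^{-\mathrm{i}t}\varphi(t)=\Exb{e^{\mathrm{i}t(\xi-1)}}$ we have $\psi(t)^n = e^{-\mathrm{i}tn}\varphi(t)^n$, and hence $\Prb{\Xi_n=n-1}=\frac{1}{2\pi}\int_{-\pi}^{\pi}\psi(t)^n e^{\mathrm{i}t}\,dt$.

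First I would isolate the contribution of a neighbourhood of the origin. As $\Exb{\xi-1}=0$ and $\Va{\xi-1}=\sigma^2<\infty$, the second-order Taylor expansion $\psi(t)=1-\tfrac{\sigma^2}{2}t^2+o(t^2)$ holds as $t\to 0$. On a window $|t|\le \delta$ the substitution $t=s/(\sigma\sqrt n)$ together with dominated convergence yields $\psi(t)^n\to e^{-s^2/2}$, while $e^{\mathrm{i}t}\to 1$ uniformly; this produces the Gaussian integral $\frac{1}{2\pi\sigma\sqrt n}\int_{\ndR}e^{-s^2/2}\,ds\,(1+o(1))=\frac{1}{\sqrt{2\pi\sigma^2 n}}(1+o(1))$.

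The main step is to account for the span. Because the support of $\xi$ is contained in $d\ndZ$, the characteristic function is periodic, $\varphi(t+2\pi/d)=\varphi(t)$, so within $[-\pi,\pi]$ there are exactly $d$ points $t_j=2\pi j/d$ at which $|\varphi(t_j)|=1$; on any compact set bounded away from them a compactness argument gives $\sup|\varphi|<1$, whence that part of the integral decays geometrically and is negligible against $n^{-1/2}$. Near each $t_j$, periodicity gives $\varphi(t_j+u)=\varphi(u)$, so each of the $d$ points contributes a Gaussian peak of identical magnitude, the one computed above, multiplied by the phase $e^{-2\pi\mathrm{i}j(n-1)/d}$. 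Summing these phases over $j$ gives $d$ exactly when $n-1$ lies in the support lattice of $\Xi_n$ (the divisibility condition) and $0$ otherwise, consistent with $\Prb{\Xi_n=n-1}=0$ off the lattice. Collecting the $d$ aligned peaks yields $\Prb{\Xi_n=n-1}\sim \frac{d}{\sqrt{2\pi\sigma^2 n}}$.

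Finally, the uniform bound drops out of the same estimates: bounding $|e^{-\mathrm{i}tm}|\le 1$ gives $\Prb{\Xi_n=m}\le \frac{1}{2\pi}\int_{-\pi}^{\pi}|\varphi(t)|^n\,dt$, an integral independent of $m$, which by periodicity equals $\frac{d}{2\pi}\int_{-\pi/d}^{\pi/d}|\varphi(t)|^n\,dt=\frac{d}{\sqrt{2\pi\sigma^2 n}}(1+o(1))$; hence $\Prb{\Xi_n=m}\le\frac{d+o(1)}{\sqrt{2\pi\sigma^2 n}}$ uniformly in $m$. I expect the delicate point to be the span bookkeeping of the third paragraph: confirming there are precisely $d$ unimodular points per period, that each yields an equal Gaussian peak, that their phases interfere constructively exactly under the stated lattice condition, and that the complementary range decays geometrically using only finite variance — so that the tail of $\varphi$ must be controlled by compactness rather than by any higher-moment or smoothness input.
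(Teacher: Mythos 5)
The paper offers no proof of this lemma at all: it is imported verbatim as a citation to Janson \cite[Lem.~4.1 and Rem.~14.2]{MR2908619}. Your proposal supplies the standard self-contained argument --- Fourier inversion of the characteristic function, a Gaussian peak at the origin obtained from the second-order expansion of $\psi(t)=\Exb{e^{\mathrm{i}t(\xi-1)}}$, geometric decay away from the $d$ unimodular points of $\varphi$ via periodicity and compactness, and the uniform bound by discarding the phase $e^{-\mathrm{i}tm}$ --- which is essentially the route by which the cited result is proved in the literature, and it is sound. Three remarks. First, the hypotheses must implicitly include $\Va{\xi}>0$ (otherwise $\xi\equiv 1$ and the displayed asymptotic is vacuous); your compactness step, namely $\sup|\varphi|<1$ off the lattice $\tfrac{2\pi}{d}\ndZ$, silently uses that the support of $\xi$ has at least two points, which is exactly this non-degeneracy. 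Second, and more interestingly, your phase computation $\sum_{j=0}^{d-1}e^{-2\pi\mathrm{i}j(n-1)/d}=d\cdot\one_{\{d\mid n-1\}}$ shows that the correct aliasing condition is $d\mid n-1$, i.e.\ $n\equiv 1\ (\mathrm{mod}\ d)$ --- which is what is forced by $\Xi_n\in d\ndZ$ a.s.\ and what the cited source actually states --- so the condition ``$n$ divisible by $d$'' in the lemma as transcribed here is a slip; it is harmless for the paper, since the asymptotic is only ever invoked for $n$ with $\Prb{\Xi_n=n-1}>0$, and the uniform upper bound (the estimate actually used in the proof of Theorem~\ref{thm:algo_runtime_n}) is unaffected. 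Third, the steps you leave implicit are genuinely routine: the dominating function $|\psi(s/(\sigma\sqrt{n}))|^{n}\le e^{-s^{2}/4}$ on the central window, and the half-peak bookkeeping at $t=\pm\pi$ when $d$ is even (there are $d$ full peaks per period once the two boundary half-neighbourhoods are identified). With those details written out, your argument is a complete and correct proof of both assertions.
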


\section{Proofs}
\label{sec:proof}
In this section we prove Theorem~\ref{thm:algo_runtime_n}. We first  show that Algorithm~\ref{algo:size-constrained-gw-tree-devroye} runs in linear time in our setting as claimed in Lemma~\ref{lem:devroye_lin_time}.  The following statement about the distribution $\xi$ defined in~\eqref{eq:def_offspring_distribution_R-enriched} will be a helpful tool for that matter and is a straightforward consequence of results about simply generated trees in presented in the previous section.  
\begin{lemma}
\label{lem:xi_finite_exp_mom}
We have $\Ex{\xi}=1$ and there exists $\eps>0$ such that $\Ex{(1+\eps)^\xi}<\infty$. In particular, the quantities $\rho_{\cA_\cR}$, $\cA_\cR(\rho_{\cA_\cR})$ and $\cR(\cA_\cR(\rho_{\cA_\cR}))$ are finite.
\end{lemma}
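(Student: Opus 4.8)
The plan is to recognize the offspring distribution $\xi$ defined in~\eqref{eq:def_offspring_distribution_R-enriched} as the tilted probability sequence attached to a suitable simply generated tree, and then to invoke the results of Section~\ref{subsec:simply_generated_trees} essentially verbatim. Concretely, I would set the weight sequence to be $\omega_k := r_k/k!$ for $k\in\ndN_0$, so that $\Phi(x) = \sum_{k\ge 0}\omega_k x^k = \cR(x)$. The defining relation $Z(x) = x\Phi(Z(x))$ from Lemma~\ref{lem:roc_Z_tau} then coincides with~\eqref{eq:relRAR}, and since both $Z$ and $\cA_\cR$ are the unique formal power series with vanishing constant term solving this equation, we have $Z = \cA_\cR$. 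In particular $\rho_Z = \rho_{\cA_\cR}$ and $\tau = Z(\rho_Z) = \cA_\cR(\rho_{\cA_\cR})$.

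Next I would check that we are in case $(Ia)$ of Lemma~\ref{lem:pi_k_fin_exp_moments}, i.e.\ that $0 < \tau < \rho$ with $\rho = \rho_{\cR}$ the radius of convergence of $\Phi = \cR$. The non-triviality assumption~\ref{assumption2} supplies $\omega_0 = r_0 > 0$ and $\omega_k > 0$ for some $k\ge 2$, which is precisely the hypothesis of Lemmas~\ref{lem:roc_Z_tau} and~\ref{lem:pi_k_fin_exp_moments}; it also guarantees via Lemma~\ref{lem:roc_Z_tau} that $\tau\in(0,\infty)$, with positivity following already from $r_0 > 0$ forcing $\cA_\cR(x) > 0$ for $x > 0$. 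The remaining strict inequality $\tau = \cA_\cR(\rho_{\cA_\cR}) < \rho_{\cR}$ is exactly the subcriticality assumption~\ref{assumption1}.

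With case $(Ia)$ confirmed, Lemma~\ref{lem:pi_k_fin_exp_moments} tells us that the tilted sequence $\pi_k = \omega_k\tau^k/\Phi(\tau)$ from~\eqref{eq:pi_k_prob_seq} is a genuine probability sequence with mean $1$ and finite exponential moments. It then only remains to observe that $\pi_k = p_k$: substituting $\omega_k = r_k/k!$, $\tau = \cA_\cR(\rho_{\cA_\cR})$ and $\Phi(\tau) = \cR(\cA_\cR(\rho_{\cA_\cR}))$ into the definition of $\pi_k$ reproduces~\eqref{eq:def_offspring_distribution_R-enriched} verbatim. This yields $\Ex{\xi} = 1$ together with the existence of $\eps > 0$ with $\Ex{(1+\eps)^\xi} < \infty$. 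Finally, the asserted finiteness of $\rho_{\cA_\cR}$, $\cA_\cR(\rho_{\cA_\cR})$ and $\cR(\cA_\cR(\rho_{\cA_\cR}))$ is read off from the concluding statement of Lemma~\ref{lem:pi_k_fin_exp_moments} under the identifications $\rho_Z = \rho_{\cA_\cR}$, $Z(\rho_Z) = \cA_\cR(\rho_{\cA_\cR})$ and $\Phi(Z(\rho_Z)) = \cR(\cA_\cR(\rho_{\cA_\cR}))$.

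Since every step is a direct translation, there is no genuine obstacle beyond bookkeeping; the one point deserving care is the presence of the factorials, namely that $\cR$ is an \emph{exponential} generating function, so that the correct weight sequence is $\omega_k = r_k/k!$ rather than $r_k$, and that this factor cancels consistently when identifying $\pi_k$ with $p_k$.
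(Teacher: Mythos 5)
Your proof is correct and follows essentially the same route as the paper: identify the weight sequence $\omega_k = r_k/k!$ so that $\Phi = \cR$ and $Z = \cA_\cR$, use non-triviality and subcriticality to place the problem in case $(Ia)$ of Lemma~\ref{lem:pi_k_fin_exp_moments}, and observe that the tilted sequence $\pi_k$ coincides with $p_k$. Your closing remark about the exponential generating function convention is exactly the bookkeeping point the paper's proof also handles.
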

\begin{proof}
Let $\mT_n(\omega)$  be the simply generated tree of size $n$ with weight sequence $\omega=(r_k/k!)_{k\ge 0}$ as defined in~\eqref{eq:T_n_simply_generated}, i.e.,
\begin{align}
\label{eq:T_n_simply_generated_proof}
	\Pr{\mT_n(\omega) = T}
	= \frac{\omega(T)}{Z_n}, \qquad \text{where }
	\omega(T) = \prod_{v\in T}\frac{r_{d^+(v)}}{d^+(v)!}, ~
	Z_n = \sum_{T' \in \widetilde{\cT}_n} \omega(T'), ~
	T\in\widetilde{\cT}_n.
\end{align} 
Then the generating function $Z(x)$ of the partition functions $(Z_n)_{n\ge 0}$ is recursively given by $Z(x)=x\cR(Z(x))$. 
Let $(\cA_\cR)_n$ be the collection of objects in $\cA_\cR$ of size $n$. Since $\cA_\cR=\cX \cdot \cR(\cA_\cR)$ we know by~\eqref{eq:gen_series_r-enriched} that
$
	\cA_\cR(x)
	= x \cR(\cA_\cR(x))
$
by which we deduce 
\begin{align}
\label{eq:A(x)_equal_part_fct}
	\cA_\cR(x) = Z(x)
	\qquad\text{and}\qquad
	\frac{\lvert(\cA_\cR)_n\rvert}{n!}
	= Z_n,\quad n\in\ndN_0. 
\end{align}
We conclude that due to Lemma~\ref{lem:roc_Z_tau} the radius of convergence $\rho_{\cA_\cR}$ of $\cA_\cR(x)$ (or equivalently $Z(x)$) is given by
\[
	\rho_{\cA_\cR}
	= \frac{\tau}{\cR(\tau)},
	\qquad\text{where }
	\tau = \cA_\cR(\rho_{\cA_\cR}).
\]
Assumption~\ref{assumption1} immediately implies that $0<\tau < \rho_\cR$ and we deduce due to Lemma~\ref{lem:pi_k_fin_exp_moments} that $(\pi_k)_{k\ge 0}$ as defined in~\eqref{eq:pi_k_prob_seq} is the probability distribution with mean $1$ and finite exponential moments such that the distribution of $\mT_n(\omega)$ is not altered by switching to $(\pi_k)_{k\in\ndN_0}$. Further we observe that this is also the distribution of $\xi$ given in~\eqref{eq:def_offspring_distribution_R-enriched}, i.e.
\[
	\pi_k
	= \frac{r_k\tau^k}{\cR(\tau)k!}
	= \frac{r_k\cA(\rho_\cA)^k}{\cR(\cA(\rho_\cA))k!}
	= p_k,
	\quad k\in\ndN_0
\]
and hence the claim is verified.
\end{proof}
With these considerations at hand we first prove Lemma~\ref{lem:devroye_lin_time} and then Theorem~\ref{thm:algo_runtime_n}. In the following let $\xi_1,\xi_2,\dots$ be independent copies of $\xi$.
\begin{proof}[Proof of Lemma~\ref{lem:devroye_lin_time}]
Define 
\[
    K
    := \max_{1\le i\le n}\xi_i, \quad
    \tau_n := \Ex{K} \quad\text{and}\quad
    \varphi_n
    := \mathbb{P}\bigg(\sum_{1\le i\le n}\xi_i = n-1\bigg).
\]
According to~\cite{MR2888318} the expected time needed in Step 1 of Algorithm~\ref{algo:size-constrained-gw-tree-devroye} (i.e.~the expected running time of Algorithm~\ref{algo:multinomial}) is $O\big((1+\tau_n)/\varphi_n)\big) + O(n)$ if the probabilities $(p_0,p_1,\dots)$ are given.
Here the term $1+\tau_n$ corresponds to  the expected time until a multinomial vector $(N_0,N_1,\dots,N_K,0,\dots)$ is generated, and it takes on average $\varphi_n^{-1}$ rejections until a vector is found such that $\sum_{1\le i\le n}iN_i = n-1$.
Note that after the (random) multinomial vector has been generated in Algorithm~\ref{algo:size-constrained-gw-tree-devroye}, Step~$2$ is deterministic and takes $O(n)$ steps, Step~$3$ is a standard shuffling procedure (e.g. \cite[Alg.~P (Shuffling)]{knuth1997}) of $n$ elements and takes $O(n)$ steps and Step~$4$ is again performed in $O(n)$ steps.
In our setting we additionally have to take care of the computation time of $(p_0,p_1,\dots,p_K)$ in Step 1. Denoting the expected time to do so by $X$ we obtain that the total expected running time of Algorithm~\ref{algo:size-constrained-gw-tree-devroye} in our setting is
\[
   O\left(\frac{1+ \tau_n + X}{\varphi_n}\right) + O(n).
\]
Lemma~\ref{lem:xi_finite_exp_mom} guarantees that $\Ex{\xi^2}<\infty$, so we immediately obtain from~\cite{MR2888318} that $\tau_n=O(n^{1/2})$ and $\varphi_n^{-1}=O(n^{1/2})$.
This means that it suffices to show that $X=O(n^{1/2})$.
Assumption~\ref{assumption3} yields that there exists a non-negative sequence $(f_i)_{i\in\ndN_0}$ with $f_n\to 0$ as $n\to\infty$ such that the expected time to compute $(p_0,p_1,\dots,p_K)$ is 
\begin{align}
\label{eq:comp_time_pk}
    X
    = \mathbb{E}\bigg[\sum_{0\le i \le K} \mathrm{e}^{f_i\cdot i}\bigg]
    = \sum_{m\ge 1} \Prb{K=m} \sum_{0\le i\le m}\mathrm{e}^{f_i\cdot i}.
\end{align}
Since $f_n \to 0$ we obtain that for every $\eps>0$  there is $C>0$ such that for all  $m \in \mathbb{N}$
\[
     \sum_{0\le i\le m}\mathrm{e}^{f_i\cdot i} \le C \cdot \mathrm{e}^{\eps m}.
\]
Thus, continuing with~\eqref{eq:comp_time_pk}
\[
    X
    \le  C\sum_{m\ge 1} \Prb{K=m} \mathrm{e}^{\eps m}
    = C \Ex{\mathrm{e}^{\eps K}}.
\]
By applying Lemma~\ref{lem:xi_finite_exp_mom} we may choose a $t>0$ such that $\Ex{\mathrm{e}^{t\xi}}<\infty$.
We use the Log-Sum-Exp  estimate $\max_{1\le i\le n}x_i \le \log(\mathrm{e}^{\alpha x_1}+\cdots+\mathrm{e}^{\alpha x_n})/\alpha$ for any $(x_1,\dots,x_n)\in\ndN_0^n$ and $\alpha>0$ to obtain 
\[
    \Ex{\mathrm{e}^{\eps K}}
    \le \Ex{\mathrm{e}^{\eps/t\cdot \log( \mathrm{exp}(t\xi_1)+\cdots+\mathrm{exp}(t\xi_n))}}
    = \Ex{( \mathrm{exp}(t\xi_1)+\cdots+\mathrm{exp}(t\xi_n))^{\varepsilon/t}}.
\]
Pick $\eps > 0$ such that $0 < \eps/t < 1/2$.
Then the Jensen inequality entails ($x \mapsto x^{\varepsilon/t}$ is concave)
\begin{align*}
    \Ex{( \mathrm{exp}(t\xi_1)+\cdots+\mathrm{exp}(t\xi_n))^{\varepsilon/t}}
    \le \Ex{ \mathrm{exp}(t\xi_1)+\cdots+\mathrm{exp}(t\xi_n)}^{\varepsilon/t}
    = (n \Ex{\mathrm{e}^{t\xi}})^{\eps/t}
    = o(n^{1/2}).
\end{align*}
\end{proof}
\begin{proof}[Proof of Theorem~\ref{thm:algo_runtime_n}]
Subsequently we go through each step in Algorithm~\ref{algo:R-enriched_tree} and explain how the expected runtime of $O(n)$ is achieved. For Step 1 see Lemma~\ref{lem:devroye_lin_time}. Let us next investigate Step 2.
Recall that $\cA(\rho_{\cA})<t_0<\rho_{\cR}$. 
The probability that $\Gamma\cR(t_0)$ produces a $k$-sized object is  
\[
	\Pr{\lvert\Gamma\cR(t_0)\rvert = k}
	= \frac{r_k t_0^k}{k! \cR(t_0)},
	\qquad k\in\ndN_0.
\]
Hence the time until an object of size $k\in\ndN_0$ is distributed like  a random variable $G(k)$ with geometric distribution and with mean
\[
	g(k)
	= \frac{r_k t_0^k}{k! \cR(t_0)} 
	\sum_{\ell \ge 1} \ell \left( 1 - \frac{r_k t_0^k}{k! \cR(t_0)} \right)^{\ell-1}
	= \frac{k! \cR(t_0)}{r_k t_0^k}.
\] 
Let $W_n$ denote the total waiting time until Step 2 is completed and set $\Xi_n = \sum_{1\le i\le n}\xi_i$. Recall that $\mT_n$ can be equivalently represented by its outdegree sequence as outlined in~\eqref{eq:condition_degree_sequence_GW} so that we deduce
\begin{align}
\label{eq:Ex_W_n}
	\Ex{W_n}
	= \Exb{ \sum_{v\in \mT_n} G({d_{\mT_n}^+(v)})}
	= \sum_{1\le i\le n}\Exb{ G(\xi_i) \Bigm| \Xi_n = n-1, S_t>0\text{ for }1\le t\le n-1 }.
\end{align}
Next we use the fact, sometimes referred to as the \emph{cycle lemma}, see for example~\cite{MR1034142}, that for any sequence $(d_1,\dots,d_n)$ of non-negative integers such that $\sum_{1\le i\le n}d_i = n-1$ there exists a unique $1\le \ell\le n$ such that the shifted sequence $(\tilde{d}_1,\dots,\tilde{d}_n)=(d_\ell,\dots,d_n,d_1,\dots,d_{\ell-1})$ fulfils that $1+\sum_{1\le i\le t}(\tilde{d}_i-1) >0$ for $1\le t\le n-1$. Since this shift is only one of $n$ possible shifts and we are dealing with iid random variables we obtain
\[
    \Prb{\Xi_n = n-1, S_t>0\text{ for }1\le t\le n-1}
    = \frac{1}{n} \Prb{\Xi_n = n-1},
\]
a well-known relation. 
Further, as $\xi_1,\xi_2,\dots$ are iid the random variable $G(\xi_i)$ is not altered by rotating $(\xi_1,\dots,\xi_n)$ so that we also get for any $k\in\ndN$
\[
    \Prb{G(\xi_i)=k,\Xi_n = n-1, S_t>0\text{ for }1\le t\le n-1}
    = \frac{1}{n}  \Prb{G(\xi_1)=k,\Xi_n = n-1},
    ~~ 1\le i\le n.
\]
Combining the latter two equations into \eqref{eq:Ex_W_n}  yields
\begin{align}
    \label{eq:W_n}
    \Ex{W_n}
    = n\Exb{G(\xi_1) \Bigm| \Xi_n = n-1}.
\end{align}
Next we compute
\begin{align*}
    \Exb{G(\xi_1) \Bigm| \Xi_n = n-1}
    &= \sum_{k,\ell\ge 0} \ell \Prb{G(k)=\ell}
    \Prb{\xi_1=k\Bigm| \Xi_n = n-1} \\
    &= \sum_{k\ge 0}g(k)\Prb{\xi_1=k\Bigm| \Xi_n = n-1}.
\end{align*}
With this at hand we obtain with Lemma~\ref{lem:local_central_limit_theorem} (which we are allowed to apply due to Lemma~\ref{lem:xi_finite_exp_mom}) that 
\begin{align}
	\label{eq:GG}
	 \Exb{G(\xi_1) \Bigm| \Xi_n = n-1}
	&= \sum_{k\ge 0} g(k) \Pr{\xi=k} \frac{\Pr{\Xi_{n-1} = n-k-1}}{\Pr{\Xi_n = n-1}}  \\
	&= O\left( \Ex{g(\xi)} \right). \nonumber
\end{align}
Since $t_0>\cA(\rho_{\cA})$, 
\begin{align}
	 \Ex{g(\xi)} =  \sum_{k\ge 0} \frac{\cR(t_0)}{\cA(\rho_{\cA})} \left(\frac{\cA(\rho_{\cA})}{t_0}\right)^k  < \infty.
\end{align}
Looking back at~\eqref{eq:W_n} this immediately gives us $\Ex{W_n} = O(n)$, as desired. Finally, generating a uniform permutation of $[n]$ in Step 3 takes time $O(n)$, see for example \cite[Alg.~P (Shuffling)]{knuth1997}.
\end{proof}

\section{Main Applications}

We study several models of random trees, graphs and permutations. For each we explain in detail how they fit into the general framework of $\cR$-enriched trees and devise linear time exact size sampling algorithms by verifying the conditions of our main theorem.

\label{sec:applications}


\subsection{Subcritical classes of connected graphs}
\label{ssec:sccg}

Let us first recall a few basic notions from graph theory. A subgraph of a graph $G$ is called a \emph{block} of $G$ if is is maximal such that it is either (isomorphic) to an edge or if it is 2-connected otherwise. We call a class $\cG$ \emph{block-stable} if it contains the graph that is isomorphic to a single edge and that has the property that a graph belongs to $\cG$ if and only if all of its blocks belong to $\cG$. Block stable classes are ubiquitous, and include, for example, classes that are specified by excluding a finite list of minors. 

It is well-known, see for example~\cite{MR2465772}, that any block stable class $\cC$ of connected graphs satisfies the decomposition
\begin{align}
	\label{eq:blockdecomp}
    \cC^\bullet \simeq \cX\cdot (\Set\circ\cB'\circ\cC^\bullet),
\end{align}
where $\cB$ denotes the class of 2-connected graphs in $\cC$ (together, possibly, with the graph that is isomorphic to an edge) and $\cX$ contains a single vertex. The combinatorial constructions used in that formula are defined in the preliminaries, see Section~\ref{ssec:classes}. This decomposition of connected graphs  is a consequence from the decomposition into blocks that enables us to describe a graph in terms of a tree and associated subgraphs of higher connectivity.
Roughly speaking, Equation~\eqref{eq:blockdecomp} follows from the fact that the root vertex of a connected rooted graph is incident to an unordered collection of blocks, that is, maximal $2$-connected subgraphs. The entire graph may be described by this collection, with arbitrary rooted connected graphs glued to each of its non-root vertices.

What is important here is that block stable classes are isomorphic to enriched trees $\cA_\cR$ with $\cR = \Set \circ \cB'$; this observation is not new and was also exploited  elsewhere~\cite{MR3551197,StEJC2018,MR4132643}. The correspondence is illustrated in Figure~\ref{fi:block}. Roughly speaking, we ``unroll'' the decomposition in~\eqref{eq:blockdecomp}.
That is, given a rooted connected graph, we form the collection of blocks incident to the root vertex. This collection will become the decoration of the root of the associated tree.
The non-root vertices of this collection correspond to the children of the root of the associated tree.
As the entire graph consists of this collection with arbitrary rooted graphs glued to each of its non-root vertices, this transformation is then recursively applied to each of these graphs, grafting the resulting enriched trees to the children of the root in the enriched tree.


\begin{figure}[t]
	\centering
	\begin{minipage}{1.0\textwidth}
		\centering
		\includegraphics[width=0.97\textwidth]{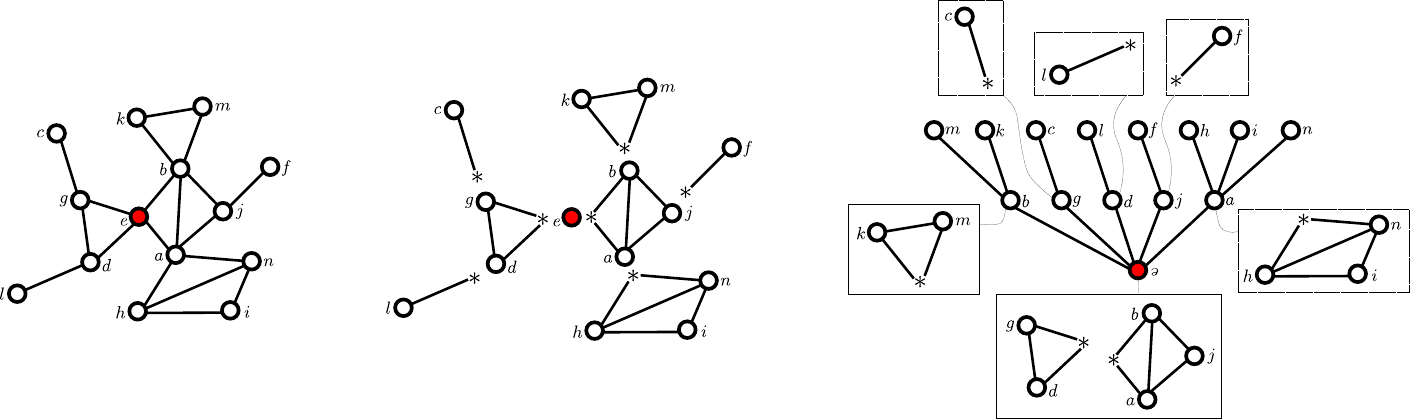}
		\caption{A rooted connected graph, its block decomposition, and the corresponding $\cR$-enriched tree.}
		\label{fi:block}
	\end{minipage}
\end{figure}

From the description of block stable classes, see also~\eqref{eq:relRAR},  we immediately obtain the relation 
\[
    \cC'(x) = \exp(\cB'(x \cC'(x)))
\]
for the corresponding exponential generating functions. We will study here the particular case in which the composition of the generating functions is \emph{subcritical}, meaning that the largest value that $x \cC'(x)$ can attain, where $x$ is at most the radius of convergence of $\cC(x)$, lies strictly within the disc of convergence of~$\cB'(x)$.
\begin{definition}
	\label{def:subcritgraph}
Let $\cC$ be a block stable class of connected graphs and $\cB$ the corresponding class of 2-connected graphs together with the graph that is isomorphic to an edge. Let $\rho_\cC$ and $\rho_\cB$ denote the radii of convergence of $\cC(x)$ and $\cB(x)$. We say that $\cC$ is \emph{subcritical} if $\rho_\cB > \rho_\cC \cC'(\rho_\cC)$.
\end{definition}
Subcritical graph classes have been studied from various viewpoints and include many important classes like trees, outerplanar and series-parallel graphs, but exclude also others, like planar graphs. From an \emph{analytical} viewpoint, in the subcritical case the behavior of $\cC(x)$ near its singular points is not dictated by the behavior of $\cB(x)$, but it is rather a consequence of the composition $\cB'(x\cC'(x))$; this is in stark contrast to critical compositions (where necessarily $\rho_\cB = \rho_\cC \cC'(\rho_\cC)$), where there is an explicit interplay. From a \emph{combinatorial} viewpoint subcritical classes are very much tree-like, in the sense that the blocks are typically small, the largest one having at most logarithmic size in the size of the graph. This makes it possible to study subcritical classes rather abstractly without explicitly fixing $\cB$, and by now there are many results that address the local as well the global structure of 'typical' members of such classes~\cite{MR3184197,MR2534261,MR2873207,MR2675698}. Here we extend this list of results by providing linear-time exact-size samplers for many important classes including cactus, outerplanar and series-parallel graphs; many other classes for which there is a decomposition of the 2-connected graphs can be treated analogously.

In order to apply our main result we begin with the following fact, which is a simple consequence of the definition of subcritical classes.
\begin{fact}
Let $\cC$ be subcritical. Then $\cC$ has the properties~\ref{assumption2} and~\ref{assumption1} in the definition of a tame class of $\cR$-enriched trees for $\cR= \Set \circ \cB'$. 
\end{fact}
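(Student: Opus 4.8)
The plan is to translate the defining inequality of subcriticality (Definition~\ref{def:subcritgraph}) literally into the subcriticality condition~\ref{assumption1} of Definition~\ref{def:tame} by matching the relevant generating-function quantities, and to read off the non-triviality condition~\ref{assumption2} from block-stability. The whole argument is a dictionary between the quantities attached to $\cC$ and those attached to $\cA_\cR$, so first I would set up this dictionary precisely.

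Since $\cR = \Set \circ \cB'$ we have $\cR(x) = \exp(\cB'(x))$; as $\exp$ is entire and neither differentiation nor this composition changes the radius of convergence, this gives $\rho_\cR = \rho_{\cB'} = \rho_\cB$. On the other side, I would use the displayed relation $\cC'(x) = \exp(\cB'(x\cC'(x))) = \cR(x\cC'(x))$: multiplying by $x$ shows that $f(x) := x\cC'(x) = \cC^\bullet(x)$ satisfies $f(x) = x\cR(f(x))$, which is exactly the functional equation~\eqref{eq:relRAR} characterizing $\cA_\cR(x)$. By uniqueness of the formal power series solution with vanishing constant term (equivalently, by the combinatorial isomorphism $\cA_\cR \simeq \cC^\bullet$ obtained by unrolling~\eqref{eq:blockdecomp}, cf.\ Figure~\ref{fi:block}) we conclude $\cA_\cR(x) = x\cC'(x)$. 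Because pointing, multiplication by $x$, and differentiation all leave the radius of convergence unchanged, this yields $\rho_{\cA_\cR} = \rho_\cC$.

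With these identities in hand I would evaluate the key quantity $\cA_\cR(\rho_{\cA_\cR})$. All coefficients of $x\cC'(x)$ are non-negative, so the function is nondecreasing on $[0,\rho_\cC]$ and its supremum there is attained in the monotone limit $x \uparrow \rho_\cC$; hence $\cA_\cR(\rho_{\cA_\cR}) = \rho_\cC \cC'(\rho_\cC)$, which is precisely the ``largest value that $x\cC'(x)$ can attain'' in Definition~\ref{def:subcritgraph}. Combining this with $\rho_\cR = \rho_\cB$, the subcriticality hypothesis $\rho_\cB > \rho_\cC \cC'(\rho_\cC)$ reads verbatim as $\rho_\cR > \cA_\cR(\rho_{\cA_\cR})$, which is condition~\ref{assumption1}; note that the strict inequality simultaneously forces $\cA_\cR(\rho_{\cA_\cR}) < \infty$, so the quantities of Definition~\ref{def:tame} are well defined.

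It remains to verify the non-triviality condition~\ref{assumption2}, which I would do combinatorially. The coefficient $r_0 = |\cR_0|$ counts sets of derived blocks of total size $0$, i.e.\ the single empty collection, so $r_0 = 1 > 0$. For the existence of some $k \ge 2$ with $r_k > 0$, I would invoke block-stability, which guarantees that the single edge lies in $\cB$; its derived version is a $\cB'$-object of size $1$, and the set consisting of two such derived edges (placed on the two distinct labels) is an $\cR$-object of size $2$, so $r_2 \ge 1 > 0$. This gives~\ref{assumption2} and completes the proof. The only step needing genuine care is the clean identification $\cA_\cR(x) = x\cC'(x)$ together with the matching $\rho_\cR = \rho_\cB$, $\rho_{\cA_\cR} = \rho_\cC$, since it is exactly this bookkeeping that makes the two notions of subcriticality coincide; everything else is immediate.
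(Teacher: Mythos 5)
Your proof is correct and is exactly the argument the paper intends: the paper states this Fact without proof as ``a simple consequence of the definition,'' and the intended content is precisely your dictionary $\cA_\cR(x)=\cC^\bullet(x)=x\cC'(x)$, $\rho_{\cA_\cR}=\rho_\cC$, $\rho_\cR=\rho_{\cB'}=\rho_\cB$ (via the functional equation $\cC'(x)=\exp(\cB'(x\cC'(x)))$ matching~\eqref{eq:relRAR}), under which Definition~\ref{def:subcritgraph} becomes condition~\ref{assumption1} verbatim, plus the observation that the empty set gives $r_0=1$ and two derived edges give $r_2\ge 1$ by block-stability. No gaps; your filled-in details (coefficient domination for $\rho_\cR=\rho_{\cB'}$, monotone limit for $\cA_\cR(\rho_{\cA_\cR})=\rho_\cC\cC'(\rho_\cC)$) are the right ones.
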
 

So, what remains to be done in order to obtain an expected linear-time exact-size sampler for $\cR$-enriched trees by applying Theorem~\ref{thm:algo_runtime_n} is to verify the properties~\ref{assumption3} and~\ref{assumption4}  in the definition of tame enriched trees for the classes at hand. In particular, we have to show that $|\cR_k| = |\Set \circ \cB'_k|$ can be computed in time $e^{o(k)}$ and that we have a Boltzmann sampler for $\cR = \Set \circ \cB'$. \emph{Assuming} for the moment that this can be done (we will verify this condition for cacti graphs, outerplanar graphs, and series-parallel graphs in the following subsections), we  may apply Theorem~\ref{thm:algo_runtime_n} that utilizes Algorithm~\ref{algo:R-enriched_tree} with the a random variable $\xi$ that has distribution, see also~\eqref{eq:def_offspring_distribution_R-enriched},
\begin{align}
    \mathbb{P}(\xi = k)
    = \frac{|\cR_k| \rho_\cC \cC^\bullet(\rho_{\cC})^k }{\cC(\rho_\cC) k!},
    \quad
    \text{with pgf}
    \quad
    \mathbb{E}[z^\xi] = \frac{\rho_\cC\cR(\cC^\bullet(\rho_\cC) z)}{\cC^\bullet(\rho_{\cC})},
\end{align}
to obtain a linear-time exact-size sampler for $\cR$-enriched trees.  
In order to obtain a linear-time exact-size sampler for $\cC$ we need to transform this enriched into a rooted graph and drop the root-vertex:

\begin{algo} (Uniform $n$ vertex graphs from a subcritical graph class $\cC$)
	\label{algo:unigraph}
	Let $\cC$ be subcritical and assume that properties~\ref{assumption3} and~\ref{assumption4}  in the definition of tame enriched trees are met. Then the following generates a uniform $n$-vertex graph from $\cC$ in expected time $O(n)$.
	\label{algo:graphs}
	\begin{enumerate}
		\item Generate a uniform $n$-vertex $\cR= \Set \circ \cB'$ enriched tree using Algorithm~\ref{algo:R-enriched_tree}.
		\item Transform the enriched tree into a rooted graph from $\cC^\bullet$.
		\item Create an unrooted graph by forgetting which was the root vertex.
	\end{enumerate}
\end{algo}

First, the algorithm is correct, since a uniform enriched tree corresponds to a uniform rooted connected graph. Any $n$-vertex labelled graph corresponds to $n$ rooted versions, hence forgetting the root vertex yields the uniform distribution on $n$-vertex labelled unrooted graphs.
	
Second,  Theorem~\ref{thm:algo_runtime_n} guarantees that generating a uniform $n$-vertex enriched tree takes expected time $O(n)$. Transforming it into a graph takes expected time $O(n)$ by the following lemma, hence Algorithm~\ref{algo:unigraph} runs in expected linear time.

\begin{lemma}
	A uniformly selected $n$-vertex $\cR=\Set \circ \cB'$ enriched tree may be transformed into a rooted connected graph from $\cC^\bullet$ in an expected linear number of steps.
\end{lemma}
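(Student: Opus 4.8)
The plan is to realize the graph by a single traversal of the enriched tree that ``unrolls'' the block decomposition~\eqref{eq:blockdecomp} exactly as sketched around Figure~\ref{fi:block}, and then to bound the total work by the expected number of edges of the resulting graph. The correctness of the transformation (that its output is a rooted connected graph from $\cC^\bullet$ and that the map is a bijection onto enriched trees) is the combinatorial correspondence already recorded in the text; the substance of the lemma is the running time, on which I focus.

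First I would describe the transformation explicitly. Recall that a decoration $R_v \in (\Set \circ \cB')[P_v]$ is a set of derived $2$-connected graphs, each carrying a distinguished $\star$-vertex, whose remaining atoms are labelled by the children $P_v$ of $v$. Traversing $T$ in breadth-first order, at each vertex $v$ I create the corresponding graph vertex and, for every block $B$ in the set $R_v$, identify the $\star$-vertex of $B$ with $v$ and each remaining atom of $B$ with the child of $v$ carrying the same label, inserting the edges of $B$ into the output graph. Since after Step~3 of Algorithm~\ref{algo:R-enriched_tree} the decorations are already labelled by the actual children, each such identification is a constant-time lookup, so the cost incurred at $v$ is $O(1 + d_{\mT_n}^+(v) + e(R_v))$, where $e(R_v)$ denotes the number of edges appearing in the blocks of $R_v$. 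Summing over all vertices and using $\sum_{v\in\mT_n} d_{\mT_n}^+(v) = n-1$, the total running time is $O(n + E)$ with $E := \sum_{v\in\mT_n} e(R_v)$ the total number of edges of the realized graph.

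It therefore remains to show $\Ex{E} = O(n)$, and here I would reuse the cycle-lemma reduction from the proof of Theorem~\ref{thm:algo_runtime_n}. Writing $\bar e(k)$ for the expected number of edges of a uniform object in $\cR_k$ (which is precisely the conditional law of $R_v$ given $d_{\mT_n}^+(v) = k$), the same rotation argument together with the local limit theorem in Lemma~\ref{lem:local_central_limit_theorem} yields
\[
	\Ex{E} = n\,\Exb{\bar e(\xi_1) \bigm| \Xi_n = n-1} = O\!\left(n\,\Ex{\bar e(\xi)}\right).
\]
The remaining point is that $\Ex{\bar e(\xi)} < \infty$. A uniform object in $\cR_k$ is a collection of blocks whose atom-sets partition $[k]$, so, adjoining the shared $\star$-vertex, it is realized on at most $k+1$ vertices; hence $\bar e(k) \le \binom{k+1}{2} = O(k^2)$ deterministically. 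Since $\xi$ has finite exponential moments by Lemma~\ref{lem:xi_finite_exp_mom}, we obtain $\Ex{\bar e(\xi)} \le C\,\Ex{\xi^2} < \infty$, and therefore $\Ex{E} = O(n)$.

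The main obstacle is exactly this bound $\Ex{E} = O(n)$: the worst-case edge count of a single decoration is quadratic in its size, so a deterministic bound is useless, and the point is to exploit that large decorations are exponentially unlikely. The cycle-lemma/local-limit reduction converts the awkward conditioning on the degree sequence of $\mT_n$ into the unconditioned moment $\Ex{\bar e(\xi)}$, where the polynomial growth of $\bar e(k)$ is harmless against the exponential tails of $\xi$. Everything else—the traversal, the label matching, and the constant-time gluing of blocks under the RAM model—is routine bookkeeping.
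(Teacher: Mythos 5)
Your proposal is correct and follows essentially the same route as the paper: traverse the tree, charge each vertex $v$ the number of edges in its decoration, bound that by $O(d^+_{\mT_n}(v)^2)$ since a decoration of size $k$ lives on at most $k+1$ vertices, and then reduce the conditioned sum to an unconditioned moment via the cycle-lemma/local-limit argument of Equations~\eqref{eq:Ex_W_n}, \eqref{eq:W_n} and \eqref{eq:GG}, which is finite because $\xi$ has exponential tails. The only cosmetic difference is that you apply the rotation argument to the edge counts $e(R_v)$ before invoking the deterministic $O(k^2)$ bound, whereas the paper bounds the edges by $O(d^+(v)^2)$ first; the mathematical content is identical.
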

\begin{proof}
	Let $(T, (R_v)_{v \in T})$ denote an $n$-vertex $\cR$-enriched tree for $\cR= \Set \circ \cB'$. We may construct the associated rooted graph by traversing the tree $T$ in some order (for example breadth-first-search) and gluing the respective blocks together as depicted in Figure~\ref{fi:block}. 
	
	At any point in this traversal, when we arrive at a vertex $v \in T$ we glue the blocks from $R_v$ to the graph constructed so far. If the graphs are represented by adjacency lists, then the time required for this step is bounded by a constant multiple of the number of edges in the blocks from $R_v$. As the number of non-$*$-vertices of $R_v$ agrees with the number $d_T^+(v)$ of children of $v$ in $T$, the number of edges in $R_v$ is at most $O(d_T^+(v)^2)$.
	
	Hence the time required for transforming $(T, (R_v)_{v \in T})$ into a graph is bounded by a constant multiple of $\sum_{v \in T} d_T^+(v)^2$. Hence it follows from Lemma~\ref{le:disenrichedtree} that for a uniformly selected $n$-vertex $\mA_n (\mT_n,(\mR_v)_{v\in \mT_n})$ the expected required time is  bounded by a constant multiple of
	\[
	\Exb{ \sum_{v \in \mT_n} d^+_{\mT_n}(v)^2}.
	\]
	By identical arguments as for Equations~\eqref{eq:Ex_W_n},~\eqref{eq:W_n}, and~\eqref{eq:GG}, it follows that this bound belongs to $O(n)$ and the proof is complete.
\end{proof}

The following subsections are devoted to verifying~\ref{assumption3} and~\ref{assumption4} for the classes of cactus, outerplanar and series-parallel graphs, which are all subcritical, see for example~\cite{MR3551197}. Further minor-closed classes (such as the class of graphs that contain no cycle of length at least five) that fall into the present setting were described in~\cite{zbMATH06569061}, but we omit the details.

\subsubsection*{Cactus Graphs}

A cactus graph is a graph in which each edge is contained in at most one cycle, that is, the class of such graphs is the block-stable class in which each every block is either an edge or a cycle. So, for this class we have
\[
    \cB = e + \Cyc(\cX),
    \quad
    \text{and}
    \quad
    \cR = \Set \circ \cB',
\]
where $e$ is class of graphs that consists only of a single edge of size two. We immediately obtain that, see the relations for generating functions in Section~\ref{ssec:classes} or directly in~\cite[Sec.~8.4]{MR3551197},
\[
    \cB'(x) = x + \frac{x^2}{2(1-x)},
    \quad
    \cR(x) = (\Set \circ \cB')(x) = e^{\cB'(x)}.
\]
In order to verify Condition~\ref{assumption3} we have to show how to compute $|\cR_k|$ in time $e^{o(k)}$. In this case, the counting sequence for $\cB'$ is explicit: we have $|\cB'_1| = 1$ and $|\cB'_k| = k!/2$ for $k \ge 2$. Then, computing the $k$-th coefficient of $e^{\cB'(x)}$ can readily be done in polynomial (actual, at most cubic) time by using for example the recursive method~\cite{MR510047}.

It remains to verify Condition~\ref{assumption4}.
From the general principles for the construction of Boltzmann samplers~\cite{MR2095975} we infer that a Boltzmann distributed object from $\cR$ contains a Poisson number of independent $\cB'$ components, each of which is also Boltzmann distributed. Moreover, a Boltzmann distributed object from $\cB'$ is with some constant probability an edge, and with the remaining probability a Boltzmann distributed cycle. To be completely explicit in this example, the Boltzmann sampler for $\cR$ is given by the following algorithm.

\begin{algo}
\label{algo:genericSetB}
Boltzmann sampler $\Gamma \cR(t)$ for the class of cactus graphs, where $0 \le t < \rho_\cR$.
	\begin{enumerate}
		\item Let $\ell$ be Poisson distributed with parameter $\cB'(t)$.
        \item For each $1\le k \le \ell$ let independently $b'_i$ be a random graph from the Boltzmann distribution from $\cB'$ with parameter $t$.
		\item Distribute uniformly at random labels from $[\sum_{1 \le k \le \ell} |b_k'|]$ to $b'_1, \dots, b'_\ell$ and return the collection of relabeled $b'_1, \dots, b'_\ell$.
	\end{enumerate}
\end{algo}
Note that this algorithm is actually a \emph{generic} algorithm for sampling from the Boltzmann distribution for a class $\Set \circ \cB'$, provided that we have a corresponding sampler for $\cB'$. We will (re-)use this algorithm in the following examples -- outerplanar and series-parallel graphs -- as well. 
It remains to specify the Boltzmann sampler for $\cB'$.
\begin{algo}
\label{algo:cactusbi}
A Boltzmann sampler $\Gamma \cB'(t)$ for the class of connected cactus graphs without a cut vertex, where $0 \le t < \rho_\cB = \rho_\cR = 1$.
	\begin{enumerate}
		\item Let $h$ be equal to one with probability $t / \cB'(t)$ and zero otherwise.
		\item If $h=1$ create an edge and otherwise a cycle with $2 + d$ vertices, where $d$ is geometrically distributed with parameter $t$.
		\item Replace in the created graph the largest label with $\star$ and return.
	\end{enumerate}
\end{algo}

\subsubsection*{Outerplanar Graphs}

An outerplanar graph is a planar graph that can be embedded in such a way that every vertex lies on the boundary of the outer face. In this case, the blocks essentially correspond to edges and dissections of polygons, see for example~\cite[Sec.~8.5]{MR3551197} for the (well-known) following statement.
\begin{lemma}
\label{lem:decompouterplanarbi}
Let $\cB$ be the class of all connected outerplanar graphs not containing a cut-vertex. Then there is a bijection
\[
    \cB' + \cB' \simeq \cX + \cD,
\]
where the class $\cD$ of dissections satisfies $\cD = \cX + \Seq_{\ge 2} \circ \cD$.
\end{lemma}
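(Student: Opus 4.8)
I will prove the two assertions in turn: first the recursive identity that defines $\cD$, then the bijection $\cB' + \cB' \simeq \cX + \cD$.

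First I would fix $\cD$ to be the class of \emph{edge-rooted dissections}, that is, subdivisions of a convex polygon into smaller polygons by pairwise non-crossing diagonals, each equipped with a distinguished oriented boundary edge (the \emph{root edge}), and with the labelled atoms being the boundary edges other than the root edge. The identity $\cD = \cX + \Seq_{\ge 2}\circ\cD$ is then the standard root-face decomposition. The inner face incident to the root edge is either a $2$-gon, in which case the object reduces to the root edge together with a single further boundary edge, giving the atom $\cX$; or a genuine $(k+1)$-gon with $k \ge 2$ sides. In the latter case, reading the $k$ non-root sides of this face in the cyclic order induced by the root orientation, each side is the root edge of a uniquely determined sub-dissection, and re-gluing the $k$ sub-dissections along these sides recovers the object. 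Since the non-root boundary edges of the whole object are exactly the disjoint union of those of the $k$ components, this correspondence is size-preserving and invertible, yielding the $\Seq_{\ge 2}\circ\cD$ summand.

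For the bijection I would invoke the classical structure theory of $2$-connected outerplanar graphs: a $2$-connected outerplanar graph on $n \ge 3$ vertices has an essentially unique outerplanar embedding whose outer face is bounded by a Hamiltonian cycle $H$, and $H$ is moreover the \emph{unique} Hamiltonian cycle of the graph. Hence the remaining edges are non-crossing chords of the $n$-gon $H$, and the graph is encoded bijectively by the resulting dissection. It then remains to match rootings and labels. Writing $\cB' = \cX + \cB'_{\ge 3}$, where the leading $\cX$ is the derived single edge and $\cB'_{\ge 3}$ collects the derived graphs on at least three vertices, and using $\cX + \cD = 2\cX + \Seq_{\ge 2}\circ\cD$, the claim reduces to $2\,\cB'_{\ge 3} \simeq \Seq_{\ge 2}\circ\cD$. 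For a graph in $\cB'_{\ge 3}$ the ghost vertex $\star$ lies on $H$; the second copy supplies one of the two orientations of $H$, and the pair consisting of the ghost vertex and the orientation is precisely a directed boundary edge issuing from $\star$, which we take to be the root edge. Transferring the label of each non-ghost vertex to its outgoing cycle edge gives a bijection between the $n-1$ non-ghost vertices and the $n-1$ non-root boundary edges, turning the graph into a labelled edge-rooted dissection whose root face is a genuine polygon, that is, an element of $\Seq_{\ge 2}\circ\cD$. The degenerate single edge is dealt with by hand: its two derived-and-oriented copies are sent to the free atom $\cX$ and to the base-case $\cX$ inside $\cD$.

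The main obstacle is the graph-theoretic input, namely the uniqueness of the Hamiltonian cycle (equivalently, of the outerplanar embedding): it is this uniqueness that makes the assignment ``graph $\mapsto$ dissection'' well defined and free of hidden choices, hence invertible. Once it is granted, what remains is careful but routine bookkeeping --- checking that the ghost-plus-orientation datum is the same as a directed root edge, and that the vertex-to-outgoing-edge relabelling is a size-preserving bijection of labelled classes. I would verify the case $n = 2$ separately, as it is the only place where the factor of two is split between the two $\cX$-summands rather than arising from an orientation of a genuine cycle.
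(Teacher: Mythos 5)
Your proof is correct, and it fills in the standard argument that the paper itself does not spell out: the paper simply cites this statement as well known (referring to \cite[Sec.~8.5]{MR3551197} and Figure~\ref{fi:decomp1}). Your two ingredients --- the root-face decomposition of edge-rooted dissections giving $\cD = \cX + \Seq_{\ge 2}\circ\cD$, and the uniqueness of the Hamiltonian cycle in a $2$-connected outerplanar graph, which turns ``derived graph plus orientation'' into a labelled edge-rooted dissection (with the single-edge case split off by hand) --- are exactly the classical route, and your bookkeeping of the factor of two and of the vertex-to-outgoing-edge relabelling is sound.
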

See Figure~\ref{fi:decomp1} for the specification of the class of dissections and the corresponding bijection.
Lemma~\ref{lem:decompouterplanarbi} enables us to compute via the recursive method~\cite{MR510047} $[x^k]\cB'(x)$ and $[x^k]e^{\cB'(x)}$ in time $e^{o(k)}$ with plenty of room to spare; this verifies Condition~\ref{assumption3}.
In order to verify~\ref{assumption4} we use, as already announced, Algorithm~\ref{algo:genericSetB} as the Boltzmann sampler for $\cR = \Set\circ \cB'$, where we additionally need to specify the sampler for $\cB'$.
This, in turn, following the general Boltzmann sampling principles~\cite{MR2095975}, can be realized by making a two-way choice between $\cX$ (with probability $t/2\cB'(t)$) and $\cD$ (with the remaining probability), in complete analogy to what we did in Algorithm~\ref{algo:cactusbi}.
Finally, the sampler for $\cD$ can be immediately obtained from the specification: we make (again) a two-way choice between $\cX$ (this time with probability $t/\cD(t)$) and $\Seq_{\ge2} \circ \cD$, where in the latter the number of $\cD$ components follows a geometric distribution, that is conditioned to be $\ge 2$, with parameter $\cD(t)$.

\begin{figure}[t]
	\centering
	\begin{minipage}{0.7\textwidth}
  		\centering
  		\includegraphics[width=1.0\textwidth]{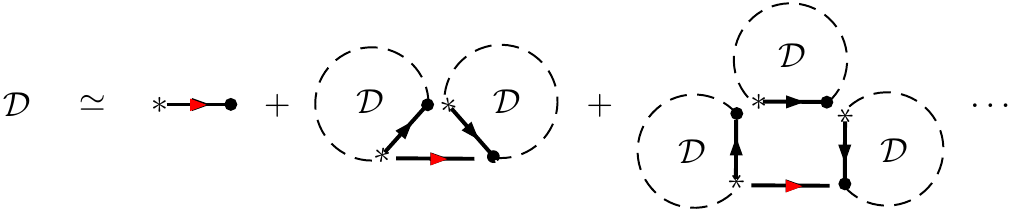}
  		\caption{Recursive specification of the class $\cD$.}
  		\label{fi:decomp1}
	\end{minipage}
\end{figure}

\subsubsection*{Series-Parallel Graphs}
In the case of series-parallel graphs we will use the following property of the associated class $\cB'$ (connected series-parallel graphs with no cut-vertex).
\begin{lemma}
\label{lem:decomp2connSP}
Let $\cX$ be the class of graphs consisting of a single graph that is an isolated vertex, $\cX_2$ the class consisting of a single graph that contains exactly two isolated vertices, and $e$ the class consisting of a single graph that is an isolated edge, where the size of it is defined to be zero. Then 
the class $\cB'$ has the decomposition
\[
	\cB' + \cB'_{(rm)}
	\simeq
	e\cdot \cX + \cB'_{(r)} + \cB'_{(m)},
\]
where $\cB'_{(rm)} \subseteq \cB'$ and
\[
	\cB'_{(r)}
	    \simeq
	\cX_2 \cdot (e + \cP)^2 \cdot \cD,
	\enspace
	\cB'_{(m)} \simeq \cX \cdot \big(e \cdot \Set_{\ge 2}(\cS) + \Set_{\ge 3}(\cS)\big),
	\enspace
	\cB'_{(rm)} \simeq \cX \cdot \cP \cdot \cS,
\]
and
\[
	\cD \simeq e + \cS + \cP,
	\cS = (e + \cP) \cdot \cX \cdot \cD,
	\cP = e \cdot \Set_{\ge_1}(\cS) + \Set_{\ge_2}(\cS).
\]
\end{lemma}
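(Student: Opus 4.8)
The plan is to proceed in two stages: first establish the three relations for the two-terminal classes $\cD,\cS,\cP$ at the bottom of the statement, and then use them to describe the derived class $\cB'$ by analysing the local series--parallel structure at the pointed vertex $\star$.

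For the network relations I would recall that a two-terminal series--parallel graph (a \emph{network}) with distinguished poles is, by the classical series--parallel decomposition, exactly one of three mutually exclusive types: the single pole-joining edge $e$; a \emph{series} network, in which the poles are separated by a cut vertex; or a \emph{parallel} network, in which the poles are joined by at least two internally disjoint networks. This trichotomy is precisely $\cD \simeq e + \cS + \cP$. For $\cS$, I would peel off the series component incident to the first pole: by maximality of the decomposition this first component is itself non-series, hence lies in $e + \cP$, the vertex at which it is glued to the remainder contributes one atom $\cX$, and the remainder is an arbitrary network, giving $\cS \simeq (e+\cP)\cdot\cX\cdot\cD$. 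For $\cP$, the parallel components form an unordered set of non-parallel networks, i.e.\ of members of $e + \cS$; since we work with \emph{simple} graphs at most one of them can be the edge $e$, so splitting according to whether the edge is present yields $\cP \simeq e\cdot\Set_{\ge 1}(\cS) + \Set_{\ge 2}(\cS)$.

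For the main identity I would view a $\cB'$-object as a $2$-connected series--parallel graph carrying a distinguished pole $\star$ that does not contribute to the size, and classify it by the role of $\star$ in the series--parallel decomposition. The single edge gives the base term $e\cdot\cX$. Otherwise $\star$ is incident either to a series structure or to a parallel structure. When $\star$ is a series cut vertex, it is flanked by two non-series components (each in $e+\cP$, explaining the factor $(e+\cP)^2$), the two genuine poles of the ambient network are ordinary vertices (the factor $\cX_2$), and the remaining incidences assemble into a single network $\cD$; this produces $\cB'_{(r)} \simeq \cX_2\cdot(e+\cP)^2\cdot\cD$. When $\star$ is a pole of a parallel structure, the opposite pole is an ordinary vertex $\cX$ and the parallel components are series networks together with an optional edge, with the multiplicities forced up by one relative to $\cP$ by the requirement that the configuration be genuinely $2$-connected and of parallel type; this gives $\cB'_{(m)} \simeq \cX\cdot\big(e\cdot\Set_{\ge 2}(\cS) + \Set_{\ge 3}(\cS)\big)$.

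The two descriptions are not disjoint: a configuration in which $\star$ is simultaneously of series and of parallel type is counted once in $\cB'_{(r)}$ and once in $\cB'_{(m)}$. I would show that this overlap is parametrised exactly by $\cB'_{(rm)} \simeq \cX\cdot\cP\cdot\cS$ (an atom for the relevant vertex, a parallel part, and a series part), so that the inclusion--exclusion $\cB' = e\cdot\cX + \cB'_{(r)} + \cB'_{(m)} - \cB'_{(rm)}$ holds; rearranging gives the stated identity $\cB' + \cB'_{(rm)} \simeq e\cdot\cX + \cB'_{(r)} + \cB'_{(m)}$. The main obstacle is exactly this bookkeeping: one must set up the bijections so that the pointed vertex is assigned a well-defined type, verify that the simple-graph constraint (at most one parallel edge) is respected throughout, pin down the shifted multiplicities $\ge 2$ and $\ge 3$ in $\cB'_{(m)}$, and check that the double-counted family is captured precisely by $\cX\cdot\cP\cdot\cS$. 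A convenient sanity check at the end is to translate every line into exponential generating functions and verify the resulting polynomial identity in the series associated with $\cD,\cS,\cP$ from the first stage.
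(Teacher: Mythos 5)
First, a point of reference: the paper does not prove this lemma at all. It imports the decomposition verbatim from \cite[Sec.~6.1]{MR2534261} and explicitly declines to reproduce the underlying bijections, so your attempt is a reconstruction of an argument the paper deliberately outsources. Your first stage --- the trichotomy $\cD \simeq e + \cS + \cP$ and the equations for $\cS$ and $\cP$, including the observation that simplicity permits at most one edge among the parallel components --- is the standard series--parallel network decomposition and is fine.

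The second stage has a genuine gap. You frame the main identity as an inclusion--exclusion over a well-defined ``type'' of the pointed vertex, with $\cB'_{(rm)}$ an overlap ``counted once in $\cB'_{(r)}$ and once in $\cB'_{(m)}$'', i.e.\ $\cB' = e\cdot\cX + \cB'_{(r)} + \cB'_{(m)} - \cB'_{(rm)}$ with each of $\cB'_{(r)}, \cB'_{(m)}$ a genuine subclass of $\cB'$. That is not what the identity asserts and it cannot be repaired in that form: $\cX_2\cdot(e+\cP)^2\cdot\cD$ is not a subclass of $\cB'$, and the natural map from it to vertex-pointed graphs has fibres of size greater than one. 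Concretely, at size $3$ (four vertices) there are exactly $9$ pointed $2$-connected series--parallel graphs, while $\cX_2\cdot(e+\cP)^2\cdot\cD$ contains $12$ objects, $\cX\cdot\bigl(e\cdot\Set_{\ge 2}(\cS)+\Set_{\ge 3}(\cS)\bigr)$ contains $3$, and $\cX\cdot\cP\cdot\cS$ contains $6$: each of the three copies of $K_4$ minus an edge in which $\star$ has degree $3$ occurs \emph{twice} in $\cB'_{(r)}$, twice in $\cB'_{(rm)}$ and once in $\cB'_{(m)}$, so the bookkeeping per graph is $1+2 = 2+1$ rather than $1 = 1+1-1$. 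A classification of each pointed graph into one type, with a pairwise overlap subtracted once, cannot produce multiplicities exceeding one. Relatedly, your reason for the shift $\Set_{\ge 2}, \Set_{\ge 3}$ in $\cB'_{(m)}$ (``genuinely $2$-connected'') is not the right one --- two parallel components already yield a $2$-connected graph; exactly two components are excluded because such a configuration places $\star$ on a ring and is already accounted for on the $(r)$ side. Your closing generating-function check would pass (the identity is true; I verified it up to order $3$), but it would validate the statement, not the combinatorial argument you sketched. An actual proof has to set up the explicit multiset bijection of \cite{MR2534261}, organised around the tree of $3$-connected components (which for series--parallel graphs are rings and multi-edges, whence the subscripts $r$ and $m$), or else simply cite that source as the paper does.
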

This lemma is taken from~\cite[Sec.~6.1]{MR2534261}, where also the simple bijections -- that can be implemented in linear time -- behind the given  isomorphisms are depicted. We will not repeat this here, since it would be a mere reconstruction of what is done in~\cite{MR2534261}.
Let us just mention that in Lemma~\ref{lem:decomp2connSP} the classes $\cD,\cS$ and $\cP$ correspond to the well-known classes of general, series and parallel networks, which are, roughly speaking, 2-connected series-parallel graphs with a removed edge whose endpoints are distinguished and do not contribute to the size. Their decomposition has been known since the 1980's.
Moreover, the decomposition of $\cB'$ follows from the general decomposition scheme of connected graphs in their 3-connected components; for details we refer to our primary source~\cite{MR2534261} and the paper~\cite{MR2465772}.

With Lemma~\ref{lem:decomp2connSP} at hand we proceed as usual in this section. We are again in a position to compute via the recursive method~\cite{MR510047}  $[x^k]\cB'(x)$ and $[x^k]e^{\cB'(x)}$ in time $e^{o(k)}$ with room to spare; this verifies Condition~\ref{assumption3}. In order to verify~\ref{assumption4} we again use Algorithm~\ref{algo:genericSetB} as the Boltzmann sampler for $\cR = \Set\circ \cB'$, where the last remaining step is to specify the sampler for $\cB'$. Using the decomposition in Lemma~\ref{lem:decomp2connSP} we readily obtain a Boltzmann sampler for $\cB' + \cB'_{(rm)}$ from the general principles for the construction of Boltzmann samplers from~\cite{MR2095975}. In addition to that, since $\cB'_{(rm)} \subseteq \cB'$, we obtain a sampler for $\cB'$ by rejection: if the sampled graph is in $\cB'_{(rm)}$, we reject it with probability $1/2$ and repeat the experiment. 

\subsection{Bienaym\'e--Galton--Watson trees conditioned on the number of vertices with given degrees.}
\label{sec:gw_leaves}

Throughout this section we fix a proper subset $\Omega \subsetneqq \ndN_0$ satisfying $0 \in \Omega$.  See Remark~\ref{rem:conditions} below for comments on this assumption. We let $\Omega^c := \ndN_0 \setminus \Omega$ denote its complement. Let $\zeta$ be a random non-negative integer satisfying 
\begin{align}
\label{eq:cond_zeta}
	\Pr{\zeta= 0} >0, \quad \Pr{\zeta \ge 2} >0, \quad \text{and} \quad \Pr{\zeta \in \Omega} > 0.
\end{align}
Our aim in this section is to develop an expected linear-time sampler for a tree $\mA_n^\Omega$ that is distributed like  a $\zeta$-Galton--Watson $\mA$ tree conditioned on having $n$ vertices with outdegree in $\Omega$. Formally, letting $L_\Omega(\cdot)$ denote the number of vertices with outdegree in $\Omega$, we set $\mA_n^\Omega= (\mA \mid L_\Omega(\mA) = n)$.  Of course, we only consider integers $n$ for which this is well defined, that is, where $\Pr{L_\Omega(\mA) =n} > 0$.  Furthermore, in order to obtain a procedure that samples in linear time, we assume that
\begin{align}
	\label{eq:finexpzeta}
	\Ex{\zeta} = 1 \qquad \text{and} \qquad \Ex{(1+\epsilon)^\zeta} < \infty \quad \text{for some $\epsilon>0$}.
\end{align}
We also assume that the weight $\Pr{\zeta=k}$ may be computed in time $\exp(o(k))$ for each $k \ge 0$, and that the probability $\Pr{\zeta \in \Omega}$ is also given.

A result by Kortchemski~\cite[Thm.~8.1]{MR2946438} asserts that for some constant $c = c(\Omega) >0$
\begin{align}
	\label{eq:rhoaaa}
	\Pr{L_\Omega(\mA) = n} \sim c n^{-3/2}.
\end{align}
Hence, we may use Boltzmann sampling as described in the introduction (rejection and truncation) to obtain a polynomial time exact-size sampler for $\mA_n^\Omega$. This performance is not optimal, hence our motivation for describing a generator that accomplishes this in linear time.

The procedure we are going to describe is based on the fact  that rooted trees satisfying  $L_\Omega(\cdot) = n$ correspond bijectively to $n$-vertex $\cR$-enriched trees for a specific class $\cR$, see  \cite{MR1284403,MR3335013}.  Since  we may generate $\cR$-enriched trees in expected linear time via Algorithm~\ref{algo:R-enriched_tree}, and since the transformation to a plane tree with $L_\Omega(\cdot) =n$ also takes expected linear time, we will arrive at generator for $\mA_n^\Omega$ that runs in expected time~$O(n)$.
To be fully precise, we will use a straight-forward extension of Algorithm~\ref{algo:R-enriched_tree} to weighted species, because $\mA_n^\Omega$ is not (necessarily) uniform among all plane trees~$A$ with $L_\Omega(A) = n$. To wit, for a tree $A$ 
\begin{align}
	\Pr{\mA_n^\Omega= A} = \frac{\Pr{\mA = A}}{\Pr{L_\Omega(\mA) = n}}  =  \frac{1}{\Pr{L_\Omega(\mA) = n}} \prod_{v \in A} \Pr{\zeta = d^+_A(v)}.
\end{align}
Consequently, the random $n$-vertex $\cR$-enriched tree corresponding to $\mA_n^\Omega$ is not (necessarily) uniform. 
Furthermore, again to be fully precise, Algorithm~\ref{algo:R-enriched_tree} is formulated for labelled structures. The plane trees we generate here are asymmetric unlabelled structures. That is, it is irrelevant whether we consider them as labelled or unlabelled, since they have no non-trivial symmetries. Thus, we may safely ignore labels in this section.

Let us start with the description of the weighted species $\cR$ in question. 
For each integer $k \ge 0$ we let $\cR_k$ denote the collection of all tuples $R = (y, x_1, \ldots, x_\ell)$ satisfying $\ell \ge 0$, $y \in \Omega$,  $x_1, \ldots, x_\ell \in \Omega^c -1$, and $y + \sum_{i=1}^\ell x_i = k$. To each such tuple $R$ we assign a weight $\gamma(R)$  by
\begin{align}
	\gamma(R) = \Pr{\zeta=y} \prod_{i=1}^\ell \Pr{\zeta= x_i +1}.
\end{align}
It was shown in \cite{MR1284403,MR3335013} in a more general context that an ordered rooted tree $A$
corresponds bijectively to a pair $(T,\beta)$ of an ordered rooted tree $T$ with $n$ vertices, and a map~$\beta$ that assigns to each inner vertex $v \in T$ a structure $\beta(v) \in \cR_{d^+_T(v)}$. Here $A$ is constructed from~$T$ by a blow-up procedure that replaces a vertex~$v \in T$ and the edges to its children by a tree constructed from $\beta(v)$ as illustrated in Figure~\ref{fi:blowup}. 

\begin{figure}[h]
	\centering
	\begin{minipage}{1.0\textwidth}
		\centering
		\includegraphics[width=0.50\textwidth]{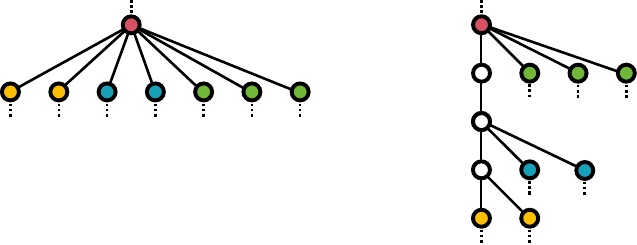}
		\caption{Blow-up procedure of a vertex $v$ (red) having $5$ children and decoration $\beta(v) = (2,2,0,3)$. }
		\label{fi:blowup}
	\end{minipage}
\end{figure}

This  correspondence is weight-preserving in the sense that the weight $\Pr{\mA= A}$ of the tree $A$ equals the weight $\prod_{v \in T} \gamma(\beta(v))$ of the decorated tree $(T, \beta)$.
Furthermore,  the random decorated tree $(\mT_n, \beta_n)$ corresponding to the random tree $\mA_n^\Omega$ has the property that $\mT_n$ is distributed like a $\xi$-Galton--Watson tree conditioned on having $n$ vertices, where the random integer $\xi \ge 0$ has probability generating function
\begin{align}
	\label{eq:exprxi}
	\Ex{z^\xi} = \left( \sum_{k \in \Omega} \Pr{\zeta = k} z^k \right) \left(1 - \sum_{k \in \Omega^c} \Pr{\zeta=k} z^{k-1}  \right)^{-1}.
\end{align}
Conditional on $\mT_n$, each decoration $\beta_n(v)$ gets drawn from  $\cR_{d^+_{\mT_n}(v)}$ with probability proportional to its $\gamma$-weight, independently from the rest. Setting $|\cR_k|_\gamma= \sum_{R \in \cR_k} \gamma(R)$ for each $k \ge 0$, the ordinary generating series  of the species $\cR$ is given by \begin{align}
	\cR(z) := \sum_{k \ge 0} |\cR_k|_\gamma z^k = \Ex{z^\xi}.
\end{align}
See~\cite{MR1284403,MR3335013,MR4132643} for detailed justifications.

Our strategy for generating $\mA_n^\Omega$ in expected time $O(n)$ is to generate the $\cR$-enriched plane tree $(\mT_n, \beta_n)$ with Algorithm~\ref{algo:R-enriched_tree} and apply the blow-up procedure. In order to verify that this works we have to do two things. First, we have to check that the
conditions of Algorithm~\ref{algo:R-enriched_tree} are met. Second, we have to check that the expected time for applying the blow-up procedure to $(\mT_n, \beta_n)$ is~$O(n)$. Note that there is a subtle difficulty in the second step, because the number of vertices of $\mA_n^\Omega$ may be much larger than $n$ and the blow-up procedure may take very long.

Let us start by verifying the conditions of Algorithm~\ref{algo:R-enriched_tree}. 
Condition~\eqref{eq:finexpzeta} and the definition of the probability generating function of $\xi$ in~\eqref{eq:exprxi} entail that $\cR(z) = \Ex{z^\xi}$ has radius of convergence
	$\rho_\cR > 1$. (Specifically, $\rho_\cR$ is the supremum of the collection of all $x>0$ for which $\Ex{x^{\zeta} \one_{\zeta \in \Omega}}< \infty$ and $\Ex{x^{\zeta-1} \one_{\zeta \in \Omega^c}}< 1$.)
For any parameter $t>0$ with $\cR(t)<\infty$ we  define a Boltzmann sampler $\Gamma \cR (t)$  with distribution
\begin{align}
	\Pr{\Gamma \cR(t) = R} = \frac{\gamma(R) t^{k(R)}}{\cR(t)}, \qquad R \in \bigsqcup_{k \ge 0} \cR_k
\end{align}
for $k(R) \ge 0$ the unique integer with $R \in \cR_{k(R)}$.

\begin{algo}{A Boltzmann sampler $\Gamma \cR(t)$:}
	\begin{enumerate}
		\item Generate a random integer $y$ with probability generating function
		$$
		    \Ex{(zt)^\zeta\one_{\zeta \in \Omega}} / \Ex{t^\zeta \one_{\zeta \in \Omega}}.
        $$
		\item Generate a random integer $\ell$ with geometric distribution with parameter $\Ex{t^{\zeta-1} \one_{\zeta \in \Omega^c}}$.
		\item For each $1 \le i \le \ell$ generate a random integer $x_i$ with probability generating function given by $\Ex{t^{\zeta-1} \one_{\zeta \in \Omega^c}}^{-1} \sum_{k \in \Omega^c} \Pr{\zeta=k} (zt)^{k-1}$. 
		\item Return $(y, x_1, \ldots, x_\ell)$.
	\end{enumerate}
\end{algo}

For any integer $k \ge 0$ (satisfying $|\cR_k|_\gamma>0$) we may condition $\Gamma \cR(t)$ on returning an element from~$\cR_k$. The element generated in this way is drawn with probability proportional to its $\gamma$-weight from $\cR_k$. Since all coordinates of a tuple from $\cR_k$ are at most $k$, we may fix some $K \ge k$ and work with a truncated version $\Gamma_{\le K} \cR(t)$ instead. That is,  $\Gamma_{\le K} \cR(t)$ uses truncated versions $(y \mid y \le K)$ and $(x_i \mid x_i \le K)$ instead, and conditioning $\Gamma_{\le K} \cR(t)$ on producing an element from $\cR_k$ also  yields a random element that gets drawn with probability proportional to its $\gamma$-weight. Furthermore, constructing $\Gamma_{\le K} \cR(t)$ only requires knowledge of $\Pr{\zeta \in \Omega}$ and the probabilities $\Pr{\zeta=i}$ for $0 \le i \le K$. We assumed that $\Pr{\zeta \in \Omega}$ is given and that $\Pr{\zeta=i}$ may be computed in $e^{o(i)}$ steps. Hence constructing $\Gamma_{\le K} \cR(t)$ requires $e^{o(K)}$ preprocessing time.
Running a single instance of $\Gamma_{\le K} \cR(t)$  only requires constant time in expectation. Moreover, $\Pr{\Gamma_{\le K} \cR(t) \in \cR_k} \ge \Pr{\Gamma \cR(t) \in \cR_k}$, hence generating a random element from $\cR_k$ using $\Gamma_{\le K} \cR(t)$ is at least as fast as using $\Gamma \cR(t)$.


We may now state our final algorithm for sampling $\mA_n^\Omega$.

\begin{algo} A generator for $\mA_n^\Omega$ that runs in expected time $O(n)$.
	\label{algo:anomega}
	\begin{enumerate}
		\item Use Algorithm~\ref{algo:size-constrained-gw-tree-devroye} to sample a  Bienaym\'e--Galton--Watson tree $\mT_n$ with offspring distribution $\xi$ conditioned on having $n$ vertices.
		\item Let $K$ denote the maximal outdegree of $\mT_n$. For a fixed $1 < t_0 <\rho_{\cR}$ repeatedly call for each $v\in \mT_n$ the sampler $\Gamma_{\le K} \cR(t_0)$ until it produces an object $\beta_n(v)$ from $\cR_{d^+_{\mT_n}(v)}$.
		\item Perform the blow-up procedure illustrated in Figure~\ref{fi:blowup} on $(\mT_n, \beta_n)$ to create $\mA_n^\Omega$.
	\end{enumerate}
\end{algo}
Here's a justification why Algorithm~\ref{algo:anomega} runs in expected time $O(n)$.
\begin{proof}
We first show that Step (1) can be implemented in expected time $O(n)$.
The expression of $\Ex{z^\xi}$ in~\eqref{eq:exprxi} allows us to compute $\Pr{\xi=k}$ in $e^{o(k)}$ steps for any $k \ge 0$, since we assumed $\Pr{\zeta = k}$ to be computable in $e^{o(k)}$ steps. Furthermore, using $\Ex{\zeta}=1$ it follows from~\eqref{eq:exprxi} that $\Ex{\xi}=1$, see~\cite[Thm.~6]{MR3335013} for details on the calculation.
Moreover, $\xi$ has finite exponential moments.
Thus, Algorithm~\ref{algo:size-constrained-gw-tree-devroye} samples from the distribution of $\mT_n$ in expected time $O(n)$.

We proceed with the analysis of Step (2) in Algorithm~\ref{algo:anomega}. Determining the maximum degree~$K$ takes $O(n)$ steps. As argued before, constructing the sampler $\Gamma_{\le K} \cR(t)$ takes $e^{o(K)}$ steps. Hence, the expected time for doing so is bounded by
\[
    \Ex{e^{o(K)}}
    \le
    \mathbb{E}\left[\sum_{v \in \mT_n} e^{o(d^+_{\mT_n}(v)}\right].\]
By identical arguments as for Equations~\eqref{eq:Ex_W_n},~\eqref{eq:W_n}, and~\eqref{eq:GG}, and since $\xi$ has an exponential tail, it follows that this bound belongs to $O(n)$.

The unique generating series $\cA_\cR(z)$ with $\cA_{\cR}(z) = z \cR(\cA_\cR(z))$ satisfies $[z^n] \cA_\cR(z) = \Pr{L_\Omega(\mA) = n}$.
By~\eqref{eq:rhoaaa} it follows that it has radius of convergence $\rho_{\cA_\cR} = 1$ and hence $\cA_\cR(\rho_{\cA_\cR})=\cR(1) = 1$ (since $\cR(z) = \Ex{z^\xi}$).  We observed above that $\rho_\cR>1$.
Thus, as justified in the proof of Thm.~\ref{thm:algo_runtime_n}, we may generate the decoration $\beta_n$ in expected time $O(n)$ by repeatedly running $\Gamma_{\le K} \cR(t)$ for each vertex $v \in \mT_n$ until we generate an element from $\cR_{d_{\mT_n}^+(v)}$. 

We conclude with the analysis of the blow-up procedure in Step (3). The time required for performing the blow-up of a vertex $v \in \mT_n$ with decoration $\beta_n(v) = (y(v), x_1(v), \ldots, x_{\ell(v)}(v))$ is bounded by $O(d_{\mT_n}^+(v) + \ell(v))$.
Recall that the outdegrees of a tree with $n$ vertices sum up to $n-1$. 
Summing over the $n$ vertices of $\mT_n$, the total time for performing all blow-up operations is hence bounded by 
\[
	n-1 + \sum_{v \in \mT_n} \ell(v).
\]
Arguing analogously as for Equations~\eqref{eq:Ex_W_n},~\eqref{eq:W_n}, and~\eqref{eq:GG}, only with conditional moments, and using $t>1$ and $\Ex{\ell}<\infty$, it follows that
\begin{align*}
	\Ex{\sum_{v \in \mT_n} \ell(v)} &= O(n) \sum_{k \ge 0} \Ex{\ell \mid y + x_1 + \ldots + x_\ell= k }  \Pr{\xi=k} \\
	&= O(n) \Ex{\ell} \sum_{k \ge 0} \frac{\Pr{\xi=k}}{\Pr{y + x_1 + \ldots + x_\ell = k}} \\
	&= O(n) \sum_{k \ge 0} \frac{[z^k] \cR(z)}{[z^k] \cR(tz)} \\
	&= O(n).
\end{align*}
It follows that the expected time for performing the blow-up is $O(n)$. Hence the total expected time for generating $\mA_n^\Omega$ using Algorithm~\ref{algo:anomega} is $O(n)$.
\end{proof}

\begin{remark}
	\label{rem:conditions}
	Throughout, we assumed that $0 \in \Omega$. Rizzolo's~\cite{MR3335013} methods may be used to generalize the procedure so that this assumption is no longer necessary. However, the decorations and the blow-up procedure are far more technical in the case $0 \notin \Omega$. We leave the details  to the reader, because all applications of the present section to models of combinatorial structures considered below are already covered by the special case $\Omega= \{0\}$.
\end{remark}

\subsubsection{Dissections of convex polygons}
Let $\cD = \cX + \Seq_{\ge2}\circ\cD$ denote the class of dissections of polygons, compare to Figure~\ref{fi:decomp1}. 
In this section we present a sampler generating uniform dissections~$\mD_n$ of size $n$ in expected time $O(n)$ as an application of Algorithm~\ref{algo:anomega} for $\mA_n^\Omega$ with $\Omega=\{0\}$. First we need some notation and an alternative viewpoint for the class $\cD$. 
For $n\ge 3$ let $P_n$ denote the polygon in the complex plane with $n$ sides whose vertices are the $n$-th roots of unity. A dissection $D$ of $P_n$ is the union of all sides of $P_n$ together with a collection of diagonals (connecting vertices of $P_n$) that may only intersect in their endpoints. Then $\cD_n$ contains all dissections of $P_{n+1}$. See the first two images in Figure~\ref{fi:polygon_tree} for an example.

\begin{figure}[h]
	\centering
	\begin{minipage}{0.8\textwidth}
  		\centering
  		\includegraphics[width=1.0\textwidth]{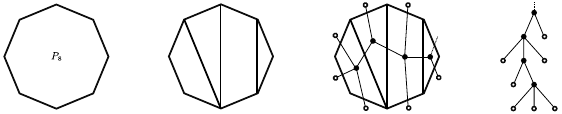}
	\end{minipage}
	\caption{From left to right: the polygon $P_8$, a dissection in $\cD_7$, the bijection $\Phi$ applied to this dissection and the corresponding tree.}
  	\label{fi:polygon_tree}
\end{figure}

In order to specify a sampler for $\mD_n$ (the uniform distribution on $\cD_n$) we exploit the following fact that can be found for instance in \cite[Prop.~2.2]{MR3245291}. 
There is a bijection $\Phi$ between $\cD_n$ and rooted plane trees with $n$ leaves where no node has outdegree $1$.
We abbreviate this set of trees by $\cH_n$.
Given a dissection $D\in\cD_n$ the tree $\Phi(D)\in\cH_n$ is constructed as follows, see also the transition from the third to the fourth image in Figure~\ref{fi:polygon_tree}.
First place a vertex in each of the faces of $D$ and outside each side of $P_{n+1}$. Then join any two vertices whose corresponding faces share a common edge. Let the root be the vertex connected to the vertex outside of the side connecting vertex $1$ with $\mathrm{e}^{2\pi \mathrm{i}/(n+1)}$ and delete this vertex and its adjacent edge. Vice versa given a tree $T\in\cH_n$ it is straightforward to obtain the corresponding dissection $\Phi^{-1}(T)\in\cD_n$ as depicted in the example (third and fourth image) in Figure~\ref{fi:polygon_tree}. 

It was shown in \cite{MR3245291} that there is a model of Galton-Watson-trees in $\cH_n$ corresponding to uniform dissections.
More concretely, for $c\in(0,1/2)$ consider the distribution of a random variable $\zeta$ given by
\[
    \Prb{\zeta=0} = \frac{1-2c}{1-c},
    \quad \Prb{\zeta=1}=0 \quad\text{and}\quad
    \Prb{\zeta=k} = c^{k-1}~~\text{for}~~ k\ge 2.
\]
Denote by $\mA_n$ the Galton-Watson tree with offspring distribution $\zeta$ and being conditioned on having $n$ leaves. Note that this corresponds to $\mA_n^\Omega$ in the previous section in the special case $\Omega=\{0\}$. Then $\mA_n$ has the same distribution as $\Phi(\mD_n)$ for any $c\in(0,1/2)$ according to \cite[Prop.~2.3]{MR3245291}. With this at hand, the sampler for $\mD_n$ involves two steps.
\begin{algo} Uniform dissection $\mD_n$ from $\cD_n$.
\label{algo:unif_dissection}
\begin{enumerate}
    \item Use Algorithm~\ref{algo:anomega} to generate $\mA_n$.
    \item Translate $\mA_n$ to $\Phi^{-1}(\mA_n)$.
\end{enumerate}
\end{algo}
In Step~$(1)$ of Algorithm~\ref{algo:unif_dissection} we generate a degree sequence $(d_1,\dots,d_K)$ for some $K\in\ndN$ representing the rooted plane tree $\mA_n$ with $n$ leaves (and $K+1$ vertices), compare to~\eqref{eq:condition_degree_sequence_GW}. To state a complete sampler for $\mD_n$ we still need to clarify a subroutine for Step~$(2)$ translating this sequence into the corresponding dissection.
\begin{algo} Translating a degree sequence $(d_1,\dots,d_K)$ to the corresponding dissection.
\label{algo:bijection_dissection}
\begin{enumerate}
    \item Create a directed cycle with $d_1+1$ vertices labelled counterclockwise by $\{1,\dots,d_1+1\}$. Let $E_1 = ((1,2),(2,3),\dots,(d_1+1,1))$ be the sequence of edges.
    \item For $2\le i\le K$ set $E_i=E_{i-1}$ if $d_i=0$. If $d_i>0$ (implying that $d_i\ge 2$) do the following. Let $(v_1,v_2)$ be the $i$-th edge in the sequence $E_{i-1}$. Create a directed cycle of size $d_i+1$ such that one edge is $(v_1,v_2)$. Label the remaining vertices counterclockwise (starting at $v_2$) with successive labels in $\ndN$ which have not been used so far. Append the newly created edges counterclockwise to $E_{i-1}$ to obtain $E_i$.
    \item Let $V$ contain all the labels in $E_K$ and let $E$ be the set of edges in $E_K$. Return the graph $(V,E)$. (To draw the dissection in the way defined above embed the graph $(V,E)$ into the complex plane such that its vertices are the $n$-th roots of unity, the vertex with label $1$ sits at $1$ and all the edges are non-crossing. Drop the labels afterwards.)
\end{enumerate}
\end{algo}

\begin{theorem}
Choosing $c=1-2^{-1/2}$ Algorithm~\ref{algo:unif_dissection} has expected runtime $O(n)$.
\end{theorem}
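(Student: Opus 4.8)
The plan is to bound the expected cost of the two steps of Algorithm~\ref{algo:unif_dissection} separately and then add them. Step~(1) is simply a call to Algorithm~\ref{algo:anomega} in the special case $\Omega = \{0\}$, so I would first verify that the offspring law $\zeta$ meets the hypotheses of Section~\ref{sec:gw_leaves} and then invoke the expected $O(n)$ guarantee already established there. Step~(2) applies the deterministic bijection $\Phi^{-1}$ through Algorithm~\ref{algo:bijection_dissection}, whose running time I would control by the size of the tree $\mA_n$. Adding the two bounds then yields the claim.

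The substantive point, and the reason for the particular value of $c$, is the criticality condition $\Ex{\zeta} = 1$ required by \eqref{eq:finexpzeta}. First I would compute $\Ex{\zeta} = \sum_{k \ge 2} k\,c^{k-1} = (1-c)^{-2} - 1 = c(2-c)(1-c)^{-2}$ and set it equal to $1$; this reduces to the quadratic $2c^2 - 4c + 1 = 0$, whose only root in $(0, 1/2)$ is $c = 1 - 2^{-1/2}$. Thus precisely this choice makes $\zeta$ critical. The remaining hypotheses are then routine to check: $\Pr{\zeta = 0} = (1-2c)/(1-c) > 0$ and $\Pr{\zeta \ge 2} > 0$ hold since $c \in (0,1/2)$, and $\Pr{\zeta \in \Omega} = \Pr{\zeta = 0}$ is available in closed form, giving \eqref{eq:cond_zeta}; the tail $\Pr{\zeta = k} = c^{k-1}$ is geometric with ratio $c < 1/2$, so any $\epsilon > 0$ with $(1+\epsilon)c < 1$ furnishes $\Ex{(1+\epsilon)^\zeta} < \infty$; and each weight $\Pr{\zeta = k}$ is computable from its explicit formula in $\mathrm{e}^{o(k)}$ steps, with $\Pr{\zeta \in \Omega}$ given. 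With these in place, the analysis of Algorithm~\ref{algo:anomega} shows that Step~(1) runs in expected time $O(n)$.

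For Step~(2) the key structural fact is that $\mA_n = \Phi(\mD_n) \in \cH_n$ has $n$ leaves and no vertex of outdegree $1$, so every internal vertex has outdegree at least $2$. I would then argue that, since the outdegrees sum to the number of vertices minus one while being simultaneously at least twice the number of internal vertices, the tree has at most $2n - 1$ vertices in total. Consequently the degree sequence $(d_1, \dots, d_K)$ fed into Algorithm~\ref{algo:bijection_dissection} has length $K = O(n)$ and satisfies $\sum_i d_i = O(n)$. Since the $i$-th iteration creates a cycle of size $d_i + 1$ and appends $d_i$ edges --- and since accessing the relevant edge of $E_{i-1}$ costs $O(1)$ when the edge sequence is stored as an indexed list --- the per-iteration cost is $O(d_i + 1)$, so the total is $\sum_{i=1}^K O(d_i + 1) = O(n)$. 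Equivalently, one may note that the resulting dissection of $P_{n+1}$ has $O(n)$ edges (at most $2n-1$ by Euler's formula) and that the algorithm does bounded work per edge. Either way Step~(2) takes deterministic time $O(n)$.

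Putting the two parts together gives the asserted expected running time $O(n)$. The one genuinely essential step is identifying $c = 1 - 2^{-1/2}$ as the value enforcing $\Ex{\zeta} = 1$: criticality is exactly the hypothesis on which the linear-time behaviour of Algorithm~\ref{algo:anomega} (and hence of Devroye's underlying sampler) depends, whereas the remaining verifications and the deterministic linear bound on the bijection are straightforward once the size of $\mA_n$ is controlled.
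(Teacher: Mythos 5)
Your proof is correct and follows essentially the same route as the paper's: verify the hypotheses \eqref{eq:cond_zeta} and \eqref{eq:finexpzeta} (together with computability of $\Pr{\zeta=k}$) so that Step~(1) inherits the $O(n)$ bound from Algorithm~\ref{algo:anomega}, then bound Step~(2) deterministically by $O(n)$ using that $\mA_n$ has no outdegree-$1$ vertices and hence $O(n)$ vertices. The only differences are cosmetic: you explicitly derive $c=1-2^{-1/2}$ from $\Ex{\zeta}=1$ (the paper merely asserts it), and your accounting $\sum_i O(d_i+1)=O(n)$ for Algorithm~\ref{algo:bijection_dissection} is equivalent to the paper's count of polygon sides plus diagonals.
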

\begin{proof}
Per definition of $\zeta$ we have that $\Prb{\zeta=0}>0$ and $\Prb{\zeta\ge 2}>0$ verifying~\eqref{eq:cond_zeta}. The choice $c=1-2^{-1/2}$ further guarantees that $\Exb{\zeta}=1$ and for $\eps>0$ such that $(1+\eps)c<1$ we have that
\[
    \Exb{(1+\eps)^\zeta}
    = (1+\eps)\frac{1-2c}{1-c} +c^{-1} \sum_{k\ge 2}(1+\eps)^kc^k
    <\infty.
\]
Hence Equation~\eqref{eq:finexpzeta} is valid. Finally, the probability $\Prb{\zeta=k}$ can be computed in $\mathrm{e}^{o(k)}$ steps as $c$ is explicitly given. We deduce that all conditions at the beginning of Section~\ref{sec:gw_leaves} are fulfilled so that Algorithm~\ref{algo:anomega}, Step~$(1)$ of Algorithm~\ref{algo:unif_dissection} respectively, runs in expected time $O(n)$. 

Let us next explain why Step~$(2)$ or equivalently Algorithm~\ref{algo:bijection_dissection} runs in expected time $O(n)$ for any degree sequence $(d_1,\dots,d_K)$ corresponding to a tree $T\in\cH_n$. First of all note that there is no vertex with outdegree $1$ in $T$ implying that $K \le 2n$ so that we have $O(n)$ iterations in steps $(1)$ and $(2)$ of  Algorithm~\ref{algo:bijection_dissection}. 
The number of created edges is $n+1$ (for the edges of the polygon of length $n+1$) plus the additional edges accounting for the diagonals of the dissection. But in each iteration of Step~$(2)$ at most one diagonal edge (if $d_i>0$) is created so that the total number of edges is $O(n+1+K)=O(n)$. As each edge is only created once the total time needed to finish Step~$(2)$ is $O(n)$.
\end{proof}
\subsection{Subcritical substitution-closed classes of permutations}
An $n$-sized permutation  $\sigma: [n] \to [n]$  may be denoted in multiple ways, for example by the sequence of numbers $\sigma(1)\sigma(2)\ldots\sigma(n)$, or graphically by a diagram corresponding to  the collection of points $\{ (i, \sigma(i)) \mid i \in [n] \}$. Given permutations $\nu_1, \ldots, \nu_n$ of arbitrary sizes $k_1, \ldots, k_n \ge 1$, we may form the $(k_1 + \ldots + k_n)$-sized permutation $\sigma[\nu_1, \ldots, \nu_n]$ by performing a \emph{substitution}-operation, where for each $1 \le i \le n$ the point $(i, \sigma(i))$ gets replaced by the diagram of the permutation $\nu_i$, and the rows and columns are rescaled accordingly. This is best explained by Figure~\ref{fi:substperm} which depicts an example.

\begin{figure}[h]
	\centering
	\begin{minipage}{1.0\textwidth}
		\centering
		\includegraphics[width=0.6\textwidth]{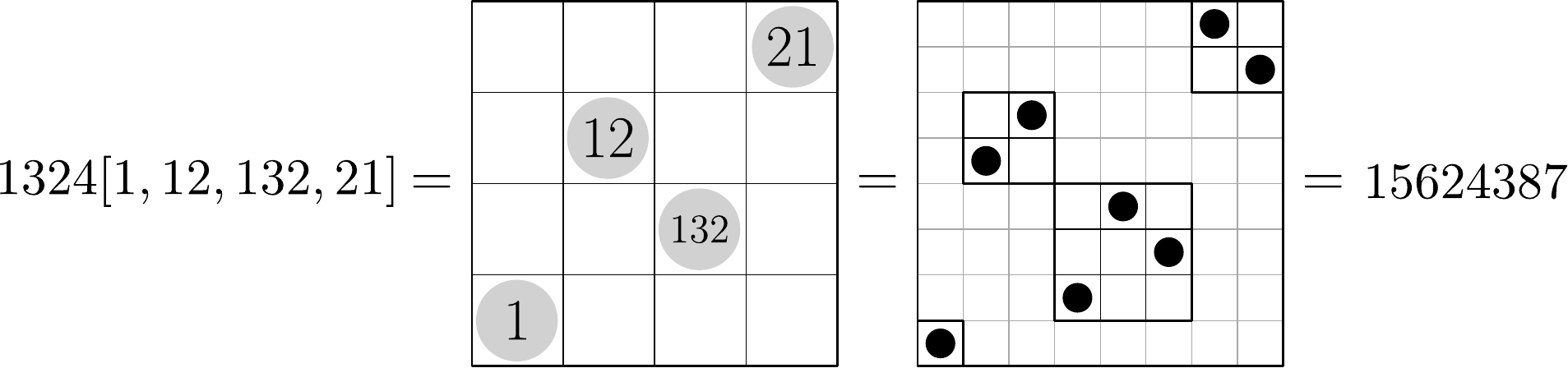}
		\caption{The substitution $\sigma[\nu_1, \ldots, \nu_4]$ for $\sigma= 1324$, $\nu_1 = 1$, $\nu_2 = 12$, $\nu_3= 132$, and $\nu_4 = 21$.}
		\label{fi:substperm}
	\end{minipage}
\end{figure}

A class $\cC$ of permutations is called \emph{substitution-closed} if $\sigma, \nu_1, \ldots, \nu_n \in \cC$  implies $\sigma[\nu_1, \ldots, \nu_n] \in \cC$. Letting $\cC_n \subset \cC$ denote the subset of $n$-sized permutations in $\cC$, the ordinary generating function of $\cC$ is given by $\cC(x) = \sum_{n \ge 1} |\cC_n| x^n$. A permutation $\sigma$ of size at least $3$ is called \emph{simple}, if it cannot be represented as a substitution of permutations, except of course in a trivial manner by $\sigma = \sigma[1, \ldots, 1]$ and $\sigma= 1[\sigma]$. We let $\cS \subset \cC$ denote the subclass of permutations that are simple and lie in $\cC$. 

\begin{definition}
We call the substitution-closed class $\cC$ of permutations \emph{subcritical}, if the radius of convergence $\rho_\cS$ of the ordinary generating series $\cS(z)$ satisfies
\begin{align}
	\label{eq:subcrit}
	\cS'(\rho_\cS) > \frac{2}{(1 + \rho_\cS)^2}-1.
\end{align}
\end{definition}

This is always satisfied if $\rho_\cS= \infty$, for which the right-hand side equals $-1$ per convention. In particular, it encompasses the case when $\cS$ is finite. The subclass $\cS$ of simple permutations plays an  analogous role for substitution-closed classes of permutations as the class of blocks does for block-stable classes of graphs. Inequality~\eqref{eq:subcrit} is the analogon to the subcriticality condition  in Definition~\eqref{def:subcritgraph} for subcritical classes of graphs. The condition also crops up in work on permutron limits~\cite{zbMATH07286839,10.1214/20-EJP469}.

If a Boltzmann sampling procedure is available for the subclass $\cS$ of simple permutations (for example, when this class if finite), and if the class $\cC$ is subcritical in the sense defined above, then a Boltzmann sampler for $\cC$ may be constructed that lets us generate a uniform $n$-sized permutation from $\cC$ in expected time $O(n^2)$. In the following, we pursue a different approach that allows us to perform this task in time $O(n)$ instead, assuming that we may compute the number $[x^k]\cS(x)$ of simple permutations of size $k$ in $\cC$ in at most $e^{o(k)}$ steps.

For all $k \ge 2$ we define the permutations $\oplus_k = 1\ldots k$ and $\ominus_k = k \ldots 1$.
We call a permutation \emph{$\oplus$-indecomposable} if it cannot be expressed as a substitution $\oplus_k(\nu_1, \ldots, \nu_k)$ for some $k \ge 2$.
The term \emph{$\ominus$-indecomposable} is defined analogously.
As detailed in \cite[Prop.~2]{albert2005simple}, any permutation in $\cC$ of size at least $2$ may be uniquely represented as a substitution $\sigma[\nu_1, \ldots, \nu_n]$ where $\nu_1, \ldots, \nu_n \in \cC$ and exactly one of the following three cases hold:
\begin{enumerate}[label=(\alph*)]
	\item $\sigma \in \cS$, or
	\item $\sigma \in \{ \oplus_k \mid k \ge 2\}$ and $\nu_1,\ldots, \nu_k$ are $\oplus$-indecomposable, or
	\item $\sigma \in \{ \ominus_k \mid k \ge 2\}$ and $\nu_1,\ldots, \nu_k$ are $\ominus$-indecomposable.
\end{enumerate}
This leads to unique representations of permutations from the class $\cC$ as canonical decomposition trees, which are plane trees  whose inner vertices are decorated with permutations from $\cS \cup \bigcup_{k \ge 2} \{\oplus_k, \ominus_k\}$, such that no two adjacent vertices may carry both an $\oplus$-decoration or both an $\ominus$-decoration. If the permutation is of the form $\sigma[\nu_1, \ldots, \nu_n]$ as in one of the three discussed cases, then the root of the associated tree is decorated with $\sigma$, and its $n$ children are roots of the (recursively defined) canonical decomposition trees corresponding to the permutations $\nu_1, \ldots, \nu_k$. See Figure~\ref{fi:exct} for an illustration. This way, the size of the permutation corresponds to the number of leaves of the associated canonical decomposition tree. 

\begin{figure}[h]
	\centering
	\begin{minipage}{1.0\textwidth}
		\centering
		\includegraphics[width=0.6\textwidth]{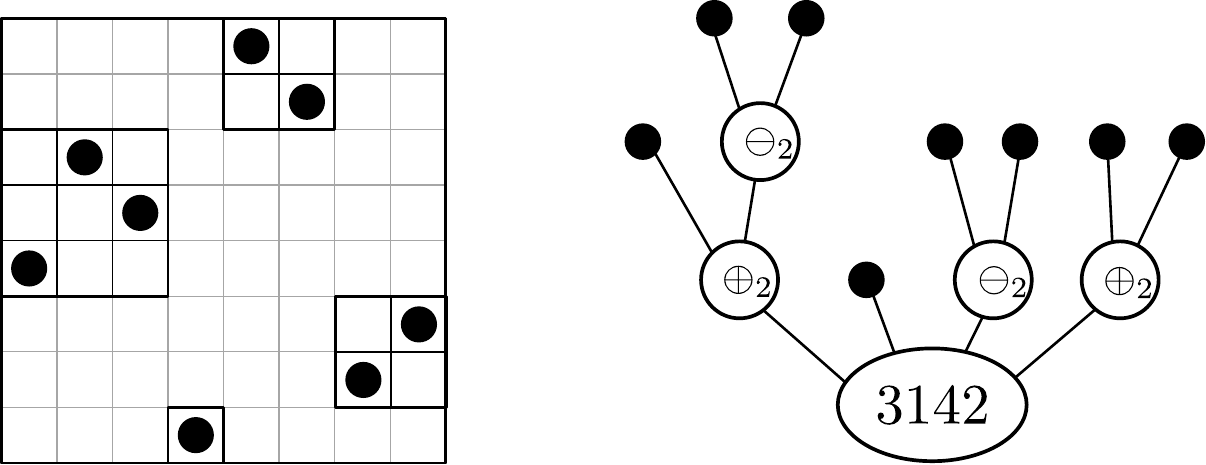}
		\caption{The permutation $46518723$ and its associated canonical decomposition tree.}
		\label{fi:exct}
	\end{minipage}
\end{figure}

Our first observation describes how the permutation associated to a canonical decomposition tree may be computed in linear time.

\begin{lemma}
	\label{le:permperm}
	A canonical decomposition tree with $n$ leaves may be transformed into a permutation in $O(n)$ steps.
\end{lemma}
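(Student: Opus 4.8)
The plan is to compute the permutation directly from the tree by a single top-down traversal, rather than by recursively forming the intermediate permutations at each inner vertex. The naive recursive approach would, at every ancestor of a leaf, recopy a block of size equal to the number of leaves in that ancestor's subtree, costing $\Theta\big(\sum_{v}(\text{number of leaves below } v)\big)$ steps; this equals the total depth of the leaves and can be of order $n\cdot\mathrm{height}$, hence quadratic for unbalanced trees. The key observation that avoids this is that both the column and the row of each leaf in the final permutation admit a closed description in terms of the root-to-leaf path. Throughout, let $T$ denote the canonical decomposition tree, represent a permutation of size $m$ by the array $(\pi(1),\ldots,\pi(m))$, and recall that each leaf of $T$ corresponds to a point of the associated permutation $\pi$, so that the number $n$ of leaves equals $|\pi|$.

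First I would record the \emph{column rule}. In a substitution $\sigma[\nu_1,\ldots,\nu_k]$ the $i$-th block $\nu_i$ occupies a contiguous range of columns, and these ranges appear from left to right in the order $i=1,\ldots,k$, which is exactly the left-to-right order of the children of the corresponding vertex in the plane tree. Applying this recursively, the column of a leaf is simply its rank in the left-to-right ordering of the leaves of $T$, and a single traversal assigns all of these in $O(n)$ steps.

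Next I would treat the rows, which is the only nontrivial part. In $\sigma[\nu_1,\ldots,\nu_k]$ the blocks are stacked vertically in increasing order of the values $\sigma(1),\ldots,\sigma(k)$, so block $i$ lies above exactly those blocks $j$ with $\sigma(j)<\sigma(i)$. Writing $m_v$ for the number of leaves in the subtree rooted at $v$ (equivalently, the size of the permutation produced there), the vertical offset of the block attached to the $i$-th child $c_i$ of an inner vertex $w$ decorated by $\sigma$ is therefore $V_i=\sum_{j:\sigma(j)<\sigma(i)} m_{c_j}$. Iterating down the tree, the row of a leaf equals $1$ plus the sum of the offsets collected along its root-to-leaf path. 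This yields the algorithm: compute all subtree sizes $m_v$ in one bottom-up pass, then perform a top-down traversal carrying a running base offset $B$, where upon entering $w$ with current offset $B$ one computes every $V_i$ and passes $B+V_i$ to the child $c_i$, recording $B+1$ as the row of each leaf reached.

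Finally I would bound the running time. At a fixed inner vertex $w$ all the offsets $V_i$ can be produced together in $O(d^+_T(w))$ steps: since the decoration $\sigma$ is a permutation of $[\,d^+_T(w)\,]$, placing $m_{c_i}$ into an auxiliary array at position $\sigma(i)$ and taking prefix sums gives each $V_i=\sum_{j:\sigma(j)<\sigma(i)} m_{c_j}$ with no sorting required. The total work is thus proportional to $\sum_w\big(1+d^+_T(w)\big)$, i.e.\ to the number of vertices and edges of $T$. Because every inner vertex carries a decoration of size at least $2$ (each $\oplus_k,\ominus_k$ has $k\ge 2$ and every simple permutation has size at least $3$), every inner vertex has at least two children, so a tree with $n$ leaves has at most $2n-1$ vertices. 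Hence the traversal, together with the bottom-up size computation and the final write-out of the array $\pi$, runs in $O(n)$ steps. The main obstacle is exactly the avoidance of repeated recopying; once the column- and row-rules above are isolated, the linear bound is immediate.
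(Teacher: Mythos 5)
Your proof is correct, but it takes a genuinely different route from the paper's. The paper works bottom-up: it builds, for each subtree, a linked list representing the one-line notation of the \emph{inverse} of the associated permutation, obtained by concatenating the children's lists in the order $L_{\sigma^{-1}(1)},\ldots,L_{\sigma^{-1}(d)}$; the quadratic recopying you identify is avoided there by using a concatenable-list data type (pointers to head and tail, so each concatenation costs $O(1)$), and a single global inversion at the end produces the permutation. You instead compute each leaf's coordinates in the diagram directly: its column is its left-to-right leaf rank, and its row is $1$ plus the accumulated vertical offsets $V_i=\sum_{j:\sigma(j)<\sigma(i)} m_{c_j}$ along the root-to-leaf path, with the $V_i$ at each vertex obtained in $O(d^+_T(w))$ time by a prefix sum over an auxiliary array indexed by $\sigma(i)$. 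Both arguments hinge on the same two facts --- per-vertex work proportional to the outdegree, and the bound of at most $2n-1$ vertices coming from every inner vertex having at least two children --- and both give $O(n)$. What your version buys is independence from any nontrivial data structure (plain arrays and two tree traversals suffice) and an explicit closed-form description of where each leaf lands in the diagram; what the paper's version buys is a cleaner structural-induction proof of correctness, since the recursive concatenation mirrors the recursive definition of substitution directly. I see no gap in your argument.
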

\begin{proof}
A canonical decomposition tree with $n$ leaves is given by a plane tree $T$ with $n$ leaves, together with a family of permutations $(\sigma_v)_{v}$ from $\cS \cup \bigcup_{k \ge 2} \{\oplus_k, \ominus_k\}$ with the index $v$ ranging over all inner vertices of $T$. Of course, $T$ and $(\sigma_v)_v$ are subject to the discussed constraints, so that the size of $\sigma_v$ is equal to the number of children of $v$ for each inner vertex $v$, and so that no two adjacent inner vertices of $T$ are both decorated with a $\oplus$-permutation, or both with a $\ominus$-permutation. 

\subsubsection*{First step: assign labels to the leaves.} Note that any inner vertex of $T$  has at least two children, because all permutations from $\cS \cup \bigcup_{k \ge 2} \{\oplus_k, \ominus_k\}$ have size at least $2$. Hence the total number of vertices in such a tree with $n$ leaves is at most $2n-1$. Hence we may assign numeric labels from $1$ to $n$ to the leaves of $T$ according to their lexicographic order in $O(n)$ time by performing a depth-first-search traversal.
 
\subsubsection*{Second step: recursively calculate a linked list of numbers.} The next step is to form a linked list that represents the inverse of the permutation that we want to compute. We use a data type that additionally has pointers to the first and last element of the list, so that we may concatenate two such lists in a bounded number of steps, regardless of their length. See~\cite[Ch.~2]{knuth1997} for details on this data structure.

The algorithm works recursively: If the tree consists of a single vertex, we return its numeric label as a linked list of length $1$. If it is not, then the root is decorated with some permutation $\sigma$, and has some number $d \ge 2$ of children. The decorated fringe subtrees $T_1, \ldots, T_d$ corresponding to these children have disjoint leaf label sets, and calling the algorithm recursively for each returns linked lists $L_1, \ldots, L_d$. The inverse of a permutation may be computed in linear time, hence we may calculate the inverse $\sigma^{-1}$ of $\sigma$ in $O(d)$ steps. The employed datatype allows us to form the concatenation $L$ of $L_{\sigma^{-1}(1)}, \ldots, L_{\sigma^{-1}(d)}$ in that order in $O(d)$ steps.
 
Now, the number of steps required for this algorithm is $O(d)$ plus the number of steps required for the recursive calls to compute $L_1, \ldots, L_d$. Hence, each vertex of $T$ contributes an $O(d^+_T(v))$ number of steps, with $d^+_T(v)$ denoting its number of children. Hence the total number of steps required to compute the list $L$ is $O(\sum_{v \in T} d^+_T(v)) = O(n)$.

\subsubsection*{Third step: return the inverse of the permutation associated to that list.}

The list $L$ computed in the second step corresponds to a permutation that for each $1 \le i \le n$ maps the number $i$ to the $i$th element of $L$. The inverse $\sigma$ of this permutation may be calculated in $O(n)$ steps.

\subsubsection*{Correctness and time complexity}
We have argued that each of the three steps may be completed in $O(n)$ steps, hence the algorithm completes in linear time. In order to check that it actually computes the permutation associated to $(T, (\sigma_v)_v)$, simply note in the second step that if for each $1 \le i\le d$ the list $L_i$ represents the inverse of the permutation corresponding to the tree $T_i$, then the concatenation $L$ of $L_{\sigma^{-1}(1)}, \ldots, L_{\sigma^{-1}(d)}$ represents the inverse of the permutation corresponding to $T$. Hence correctness of the algorithm follows by structural induction.

\subsubsection*{A closing example}
Let us close with an example. The canonical decomposition tree $T$ in Figure~\ref{fi:exct} consists of an outdegree $d=4$ root vertex decorated by the permutation $\sigma= 3142$, with $4$ decorated trees $T_1, \ldots, T_4$ attached to it. The first step of the algorithm labels the leaves from $1$ to $8$ in lexicographic order, that is, from left to right in the drawing in Figure~\ref{fi:exct}. The lists corresponding to the subtrees in the second step are given by $L_1 = (1,3,2)$, $L_2 = (4)$, $L_3 = (6,5)$, and $L_4 = (7,8)$. The inverse of $\sigma$ is given by $\sigma^{-1} = 2413$. Hence the list $L$ is given by the concatenation of $L_2, L_4, L_1, L_3$, that is, $L = (4,7,8,1,3,2,6,5)$. Its inverse is the permutation $47813265$ corresponding to the tree~$T$.
\end{proof}

The drawback of canonical decomposition trees is that the constraints for the decoration of the children of a vertex depend on the decoration of the vertex itself. This violates one of the requirements of $\cR$-enriched trees, where the decoration of a vertex is only constrained by the number of its children.

For this reason, packed trees were introduced in~\cite{10.1214/20-EJP469}. The idea is to encode canonical decomposition trees by trees with different kinds of decorations. To this end, we define a \emph{gadget} as a special kind of canonical decomposition tree, with the additional requirements that it has height at most $2$, and the root is an internal vertex decorated by a simple permutation, and each child of the root is either a leaf or an internal vertex decorated by an increasing permutation from $\{\oplus_k \mid k \ge 2\}$. The size of a gadget is its number of leaves. We define the class $\cQ$ as the union of the collection of all gadgets and the collection  $\{ \circledast_k \mid k \ge 2\}$ of formal objects, the index $k$ denoting the formal size of such an object $\circledast_k$. Thus, the ordinary generating series of the class $\cQ$ is given by
\begin{align}
	\cQ(x) = \frac{x^2}{1-x} + \cS \left( \frac{x}{1-x} \right).
\end{align}

A \emph{packed tree} is a  rooted plane tree where each internal vertex is decorated by an object from the class $\cQ$, with the size of the object matching the number of children of the vertex. The size of a packed tree is defined to be its number of leaves.

\begin{figure}[t]
	\centering
	\begin{minipage}{1.0\textwidth}
		\centering
		\includegraphics[width=0.8\textwidth]{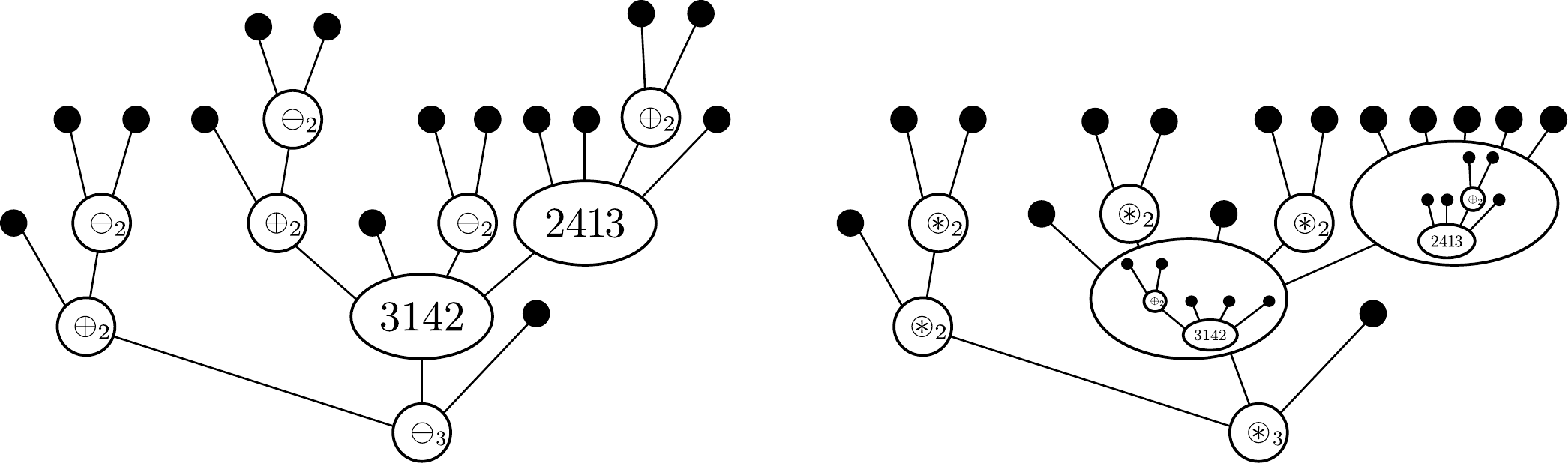}
		\caption{A canonical decomposition tree and its corresponding packed tree.}
		\label{fi:packed}
	\end{minipage}
\end{figure}

As argued in~\cite[Prop. 2.15]{10.1214/20-EJP469}, there is a size-preserving bijection between the class of packed trees and the subclass of canonical decomposition trees whose root is not decorated by an increasing permutation from $\{\oplus_k \mid k \ge 2\}$.  That is, those that correspond to $\oplus$-indecomposable permutations in $\cC$. Let us call such decomposition trees $\oplus$-indecomposable, and define $\ominus$-indecomposable decomposition trees analogously.

Our next observation tells us that the number steps required for applying this bijection is linear in the size of the input.

\begin{lemma}
	\label{eq:packi}
	The $\oplus$-indecomposable canonical decomposition tree corresponding to a given packed tree with $n$ leaves may be computed in $O(n)$ steps.
\end{lemma}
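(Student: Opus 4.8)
The plan is to implement the inverse of the bijection from \cite[Prop.~2.15]{10.1214/20-EJP469} as a single recursive traversal of the packed tree in which each internal vertex is expanded locally into a small fragment of the target decomposition tree, and then to bound the overall cost by the sum of the sizes of these fragments. Concretely, following the explicit construction of \cite{10.1214/20-EJP469}, an internal vertex $v$ carrying a decoration $q_v \in \cQ$ is expanded according to the type of $q_v$: if $q_v = \circledast_k$ is one of the formal objects, then $v$ is turned into a single internal vertex decorated by $\ominus_k$; and if $q_v$ is a gadget, then $v$ is replaced by the height-at-most-$2$ decomposition tree underlying that gadget. In either case the leaves of the produced fragment are in canonical correspondence with the $d^+(v)$ children of $v$, so the recursively computed subtrees may be grafted onto them, and the fragment itself has $O(d^+(v))$ vertices and edges.

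First I would fix a pointer-based representation of rooted plane trees (adjacency lists together with handles to the root and to each leaf of a fragment) that permits joining a subtree at a prescribed leaf in a bounded number of steps, exactly as in the data structure used in the proof of Lemma~\ref{le:permperm}. I would then traverse the packed tree by depth-first search: at a vertex $v$ the algorithm first obtains, by recursion, the decomposition trees of the fringe subtrees rooted at the children of $v$, then assembles from $q_v$ the local fragment described above in $O(d^+(v))$ steps, and finally grafts the $d^+(v)$ child-subtrees onto the appropriate leaves of the fragment, again in $O(d^+(v))$ steps with the chosen representation. Correctness is inherited from \cite[Prop.~2.15]{10.1214/20-EJP469}, and a structural induction (in complete analogy with the one in the proof of Lemma~\ref{le:permperm}) confirms that if every child of $v$ is expanded correctly then so is the fringe subtree at $v$.

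For the running time I would argue by the same elementary counting that bounds the number of vertices. Every object of $\cQ$ has size at least $2$, so every internal vertex of a packed tree has at least two children; hence a packed tree with $n$ leaves has at most $n-1$ internal vertices, at most $2n-1$ vertices in total, and $\sum_{v} d^+(v) \le 2n-2$. Since the work performed at $v$ is $O(d^+(v))$, the total running time is $O\!\big(\sum_v d^+(v)\big) = O(n)$; the same bound shows that the resulting decomposition tree has $O(n)$ vertices, so nothing is lost in the output size either.

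The hard part will be the local expansion step rather than the accounting: one must check that, for a gadget decoration, the $d^+(v)$ leaves of the fragment can be enumerated and matched with the children of $v$ in time $O(d^+(v))$, and that grafting the child-subtrees never produces a forbidden adjacency (two adjacent vertices both carrying an $\oplus$- or both an $\ominus$-decoration). Both are consequences of the explicit form of the bijection in \cite{10.1214/20-EJP469}: a gadget already absorbs precisely the $\oplus$-decorations sitting below its simple root, so the vertices at which grafting occurs cannot be $\oplus$-decorated, and the packing constraints guarantee the analogous statement for $\ominus$. Once this is verified, the linear-time bound follows immediately from the counting above.
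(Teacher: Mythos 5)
Your cost accounting (every internal vertex of a packed tree has at least two children, hence at most $2n-1$ vertices and $\sum_v d^+(v) \le 2n-2$, so $O(d^+(v))$ work per vertex gives $O(n)$ overall) is exactly the argument the paper uses, and the gadget blow-up step is also handled the same way. But your local expansion rule for the formal objects is wrong: you replace every $\circledast_k$ unconditionally by $\ominus_k$. The sign that $\circledast_k$ must receive depends on the decoration of its parent in the \emph{output} tree: it is $\ominus_k$ at the root and whenever the parent is not $\ominus$-decorated, but it must be $\oplus_k$ when the parent is $\ominus$-decorated. Packed trees do admit adjacent $\circledast$-decorated vertices --- for instance the permutation $\ominus_2[\oplus_2[1,1],1]$ has a canonical decomposition tree whose root is $\ominus_2$ with an $\oplus_2$-child, and both of these are encoded as $\circledast_2$ in the packed tree since neither sits below a simple permutation. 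Under your rule both would decode to $\ominus_2$, which violates the no-two-adjacent-$\ominus$ constraint and yields the wrong permutation. Your closing claim that ``the packing constraints guarantee the analogous statement for $\ominus$'' is therefore false: nothing in the definition of $\cQ$ or of packed trees forbids a $\circledast$-vertex from having a $\circledast$-child; the constraint is restored only by the alternating-sign decoding rule, which your algorithm omits.

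The paper resolves this with a second, top-down pass (breadth-first) that assigns $\ominus_k$ to the root and the sign opposite to the (already decorated) parent to every other $\circledast_k$-vertex. This also exposes a structural problem with your bottom-up recursion: you build the fragments at the children before the fragment at $v$, but the correct sign of a $\circledast$-child is only determined once the parent's decoration is known, so the information flows downward. The fix is minor --- either pass the parent's sign down as a parameter of the recursion, or decorate in a separate top-down traversal as the paper does --- and it does not affect the $O(n)$ bound, but as written the procedure does not compute the bijection of \cite[Prop.~2.15]{10.1214/20-EJP469}.
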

\begin{proof}

The canonical decomposition tree associated to a packed tree $P$ gets constructed in two steps:

\subsubsection*{First step: Blow-up gadgets.} We perform a blow-up procedure where  each internal vertex $v$ of $P$ that is decorated by a gadget $G$ gets replaced by its gadget. That is, we delete $v$ and add an edge between its  parent and the root of $G$. For each integer $i$ from $1$ to the number $d$ of  children of $v$ we  merge the root of the $i$-th subtree attached to $v$ with the $i$-th leaf of $G$.

The time required to perform a single blow-up of $v$ is $O(d)$. Hence the total time required for all blow-ups is linear in the number of vertices of $P$. Since $P$ has $n$ leaves and each internal vertex has at least two children, its number of vertices is at most $2n-1$. Hence the time required for the first step is $O(n)$.
	 
\subsubsection*{Second step: Replace $\circledast$-signs.} 	
We traverse the vertices of the tree $\tilde{P}$ resulting from the first step in a breadth-first-search order. Whenever we encounter a vertex $v$ that is decorated by $\circledast_k$ for some $k \ge 2$, we replace $\circledast_k$ by either $\ominus_k$ or $\oplus_k$ according to the following rule. If $v$ is the root of $\tilde{P}$, then we replace it with $\ominus_k$. If $v$ is not equal to the root of $\tilde{P}$, then its parent is decorated with $\ominus_k$ or $\oplus_k$ (possibly due to modifications done in prior steps of the breadth-first-search traversal), and we replace the decoration of $v$ with the opposite sign of its parent.

The time required for this is linear in the number of vertices of $\tilde{P}$. As $\tilde{P}$ has $n$ leaves and each internal vertex has at least two children, it has at most $2n-1$ vertices. Hence the time required for the second step is $O(n)$. 

\end{proof}

Note that packed trees also correspond bijectively to $\ominus$-indecomposable canonical decomposition trees.  The only difference to the bijection with $\oplus$-indecomposable permutations here is that in the second step we replace the decoration of the root by $\oplus_k$ in case that it previously carried $\circledast_k$ for some $k \ge 2$.

Next, we are going to describe how uniform packed trees may be generated in expected linear time. The class $\cP$ of packed trees (with leaves as atoms) is related to the class $\cQ$ via
\begin{align}
	\cP(x) = x + \cQ(\cP(x)).
\end{align}
This identifies the class $\cP$ as  $\cQ$-enriched 
Schr\"oder
parenthesizations, which are in bijection to $\Seq(\cQ/\cX)$-enriched trees by the Ehrenborg--M\'endez bijection~\cite{MR1284403}. (Here $\cQ/\cX$ denotes the class constructed from $\cQ$ by shifting the sizes of the objects so that its generating series is given by $\cQ(x)/x$.) 
We hence have two options for generating them in expected linear time. The first is to employ this bijection and apply our main Algorithm~\ref{algo:R-enriched_tree} for $\cR= \Seq(\cQ/\cX)$. The second option is to employ a variant of this algorithm that uses Galton--Watson trees conditioned on their number of leaves instead, as opposed to the total number of vertices. This is possible, since Algorithm~\ref{algo:anomega} allows us to sample these trees in expected linear time. From our viewpoint it is more natural to go with the second option.

Throughout the rest of this section let us assume that the class $\cC$ is subcritical, that is, Inequality~\eqref{eq:subcrit} holds. We let $\rho_\cP$ denote the radius of convergence of $\cP(x)$. It was shown in~\cite[Proof of Prop. 3.5 and Sec. 3.1]{10.1214/20-EJP469} that in this case $\cQ(x)$ has a positive radius of convergence $\rho_\cQ>0$, and that there is a unique number $0<\kappa \le \rho_\cS$ with 
\begin{align}
\cS'(\kappa) = 2/(1+\kappa)^2 -1.	
\end{align} Moreover, it was shown that  $y:= \cP(\rho_\cP)  = \kappa/(1+\kappa) \in ]0, \rho_\cS[$ satisfies $\cQ'(y) = 1$, and we may define a random non-negative integer $\zeta$\footnote{The constant $y$ and the random variable $\zeta$ defined here correspond to the constant $t_0$ and the random variable $\xi$ defined in~\cite[Prop. 3.5]{10.1214/20-EJP469}.} with probability generating function
\begin{align}
	\Ex{x^\zeta} = 1 - \cQ(y)/y + \cQ(x y) / y
\end{align}
having radius of convergence strictly larger than $1$, so that $\zeta$ has finite exponential moments. It follows that $\zeta$ satisfies
\begin{align}
	\Ex{\zeta} = \cQ'(y) = 1 .
\end{align}
As shown in~\cite[Sec. 6.4]{MR4132643}  for general enriched Schr\"oder parenthesizations, and noted in~\cite[Lem. 3.4, Sec. 3.4]{10.1214/20-EJP469} for the special case of the present setting, a uniform packed tree may be generated by conditioning a $\zeta$-Galton--Watson tree on having $n$ leaves, and adding uniform $\cQ$-decorations to each internal vertex in the second step.
 The result of the present work enable us to perform this in expected linear time, in analogy to Algorithm~\ref{algo:R-enriched_tree}.
Suppose that for some $t_0>0$ with $\cP(\rho_\cP) < t_0 < \rho_\cS$ there is a Boltzmann sampler $\Gamma \cQ(t_0)$ that runs in expected finite time. Recall that we assume that $[x^k]\cS(x)$ (and hence  also $\Pr{\zeta=k}$) may be computed in $\exp(o(k))$ steps. 

\begin{algo}	\label{eq:algoparenthesizations}  A generator for a uniform packed tree with $n$ leaves that runs in expected time $O(n)$.
	\begin{enumerate}
		\item  Generate  a $\zeta$-Galton--Watson tree $\mA_n$ conditioned  having $n$ leaves using Algorithm~\ref{algo:anomega} for the special case $\Omega = \{0\}$.
		\item For each internal vertex $v$ of $\mA_n$ let $d \ge 2$ denote its number of children and select a uniform $d$-sized $\cQ$-decoration by repeatedly running the Boltzmann sampler $\Gamma \cQ(t_0)$ until it produces a $d$-sized object.
	\end{enumerate}
\end{algo} 
Here is a justification why this runs in expected linear time:
\begin{proof}
The first step terminates in expected time $O(n)$ as shown in Algorithm~\ref{algo:anomega}. The fact that adding decorations using a Boltzmann sampler above the critical threshold $\cP(\rho_\cP)$ also takes expected time $O(n)$ may be verified by recalling that the $\zeta$-Galton--Watson tree $\mA_n$ with $n$ leaves is constructed in Algorithm~\ref{algo:anomega} from an associated $\xi$-Galton--Watson tree with $n$ vertices, enabling us to adapt the arguments of the proof of Algorithm~\ref{algo:R-enriched_tree} in a straight-forward way.
\end{proof}

Recall that a packed tree corresponds bijectively to an $\ominus$-indecomposable canonical decomposition tree, and likewise to an $\oplus$-indecomposable canonical decomposition tree. 

\begin{algo}	\label{eq:permsampler}  A generator for a uniform $n$-sized permutation from the subcritical class $\cC$ that runs in expected time $O(n)$.
	\begin{enumerate}
		\item  Use Algorithm~\ref{eq:algoparenthesizations} twice to sample two independent $n$-sized packed trees $P_1$ and $P_2$. If both have a root decorated by $\circledast$-symbols, discard them and try again until at least one of the two has a root decorated by a gadget.
		\item We make a case distinction. 
		\begin{enumerate}
			\item If both $P_1$ and $P_2$ have a root decorated by a gadget, use the procedure from Lemma~\ref{eq:packi} to compute the canonical decoration tree $T$ corresponding to $P_1$.
			\item If $P_1$ has a root decorated by a gadget, but $P_2$ doesn't, then use the procedure from Lemma~\ref{eq:packi} to compute the canonical decoration $\ominus$-indecomposable tree $T$ corresponding to $P_2$.
			\item If $P_2$ has a root decorated by a gadget, but $P_1$ doesn't, then use (minor adaption of) the procedure from Lemma~\ref{eq:packi} to compute the canonical decoration $\oplus$-indecomposable tree $T$ corresponding to~$P_1$.
		\end{enumerate}
		\item Use the procedure from Lemma~\ref{le:permperm} to compute the permutation corresponding to the canonical decomposition tree $T$. Return this permutation.
	\end{enumerate}
\end{algo} 

\begin{proof}
	First, let us verify that this algorithm actually samples a uniform permutation from $\cC$. The generating series $\cT(x)$ for canonical trees (identical to $\cC(x)$) may be split up into three series,
	\[
		\cT(x) = \cT_{\oplus}(x) + \cT_{\ominus}(x) + \cT_S(x),
	\]
	depending on whether the root is decorated with an $\oplus$-symbol, and $\ominus$-symbol, or a simple permutation from $\cS$. By symmetry it holds that
	\[
		\cT_{\oplus}(x) = \cT_{\ominus}(x).
	\]
	A packed tree whose root is decorated with a $\circledast$-symbol may be interpreted either as an element from $\cT_{\oplus}$ or from $\cT_{\ominus}$. A packed tree whose root is decorated by a gadget corresponds to a canonical decoration tree from $\cT_S$.

	Hence $a_n = [x^n] \cT_{\oplus}(z) = [x^n] \cT_{\ominus}(x)$ equals the number of packed trees with a $\circledast$-root, and $b_n = [x^n] \cT_S(x)$ equals the number of packed trees with a gadget root. The canonical tree $T$ generated in the first two steps of the procedure satisfies 
	\[
		\Pr{T \in \cT_S} = \left(\frac{b_n}{a_n + b_n}\right)^2 \left(1- \left(\frac{a_n}{a_n + b_n}\right)^2\right)^{-1} = \frac{b_n}{2a_n + b_n}.
	\]
	Likewise, 
	\[
		\Pr{T \in \cT_\ominus} = \Pr{T \in \cT_\oplus} = \frac{a_n b_n}{(a_n + b_n)^2} \left(1- \left(\frac{a_n}{a_n + b_n}\right)^2\right)^{-1} = \frac{a_n}{2a_n + b_n}.
	\]
	Conditional on belonging to either of these three classes the tree $T$ is uniformly distributed. Hence $T$ is uniformly distributed among all canonical decomposition trees with $n$ leaves. Consequently, the Algorithm produces a uniform $n$-sized permutation from the class $\cC$.
	
	As for the performance of this algorithm, note that the number of pairs $(P_1, P_2)$ we need to sample in the first step follows a geometric waiting time for an event with probability  $1- \left(\frac{a_n}{a_n + b_n}\right)^2$. By for example~\cite[Eq. (12), (13)]{{10.1214/20-EJP469}} we know that $c_n := [x^n] \cT(x) = 2 a_n +b_n$ satisfies $c_n \sim (a_n +b_n) / (1 - \cP(\rho_\cP))^2$ with $\cP(\rho_\cP) \in ]0,1[$. Dividing by $a_n +b_n$ on both sides it follows that
	\[
		\lim_{n \to \infty} \frac{a_n}{a_n + b_n} = \frac{1}{(1 - \cP(\rho_\cP))^2} -1 \in ]0,1[.
	\]
	Hence we need an expected finite number of pairs, each of which may be sampled with an expected time $O(n)$ by Algorithm~\ref{eq:algoparenthesizations}. Hence the first step takes time $O(n)$ in expectation. The time for the second step admits a deterministic $O(n)$ upper bound by Lemma~\ref{eq:packi}. Likewise, the time for the third step takes is $O(n)$ by Lemma~\ref{le:permperm}. Hence the expected time for the entire algorithm is $O(n)$.
\end{proof}

\subsection{Further examples}

As mentioned in the introduction, the framework of the present work allows for the construction of linear-time exact-size samplers for a large variety of classes. The key property are bijections between these classes to instances of $\cR$-enriched trees and resulting connections to mono-type branching processes. 

Going through the details would require us to recall large amounts of combinatorial background on these classes and their bijective encodings.  Since the construction of the samplers may be performed in analogous manner as in the treated examples, we will only briefly comment on each case. We also remark that the list presented in the present work makes no claim to be exhaustive. Further class might be treated in the same way.

\subsubsection{Outerplanar maps} Planar maps are embeddings of planar graphs into the $2$-sphere, considered up to orientation-preserving homeomorphism. The faces of a planar map are the connected components that remain after removing the map from the sphere. Usually one distinguishes and orients a root-edge, and calls the face to its right the outer face. A planar map is called outerplanar if all its vertices lie on the frontier of the outer face.
	
	As mentioned in the introduction, earlier work~\cite{MR2185278} already described a linear-time exact-size sampler for uniform random $n$-vertex simple outerplanar maps. Later,~\cite{aihpstufler2017} established a bijective encoding between outerplanar maps in terms of $\cR$-enriched trees for \[\cR= \Seq \circ \cD,\] with $\cD$ the class of dissections. Similar as for the block-stable graph classes treated in Subsection~\ref{ssec:sccg}, this class of $\cR$-enriched trees can be shown to be tame and the bijection may be applied in an expected linear time. This results in an alternative exact-size sampler that operates in expected linear time.
	

\subsubsection{Cographs}

Cographs may be characterized recursively as follows: Any graph consisting of a single vertex and no edges is a cographs. The disjoint union of two cographs is a cograph. The complement of cograph is a cograph.

The recent work~\cite[Lem. 5.1]{cographs} describes how a uniformly chosen cograph with $n$ labelled vertices may be generated from a tree $\tau_n$ obtained by conditioning a Bienaym\'{e}--Galton--Watson tree on having $n$ leaves. The offspring distribution $\zeta$ is given by its probability generating function 
\[
	\Ex{z^\zeta} = 2 (1 - 1 / \log 2) + 2^z / \log 2 - z.
\]
 A  parity $p \in \{\text{even}, \text{odd}\}$ is chosen uniformly at random (only once).
The $n$ leaves of the random tree $\tau_n$ form the vertex set of the associated cograph. Any two distinct leaves of the tree are adjacent in the cograph if the parity of the height of their lowest common ancestor in $\tau_n$ equals~$p$.

The tree $\tau_n$ may be generated in expected time $O(n)$ as described in Subsection~\ref{sec:gw_leaves}. The corresponding cograph may be generated in generated in $O(n^2)$ steps. The average runtime of the resulting exact-size sampling procedure is linear in the output size, since the expected number of edges of the random cograph has order $n^2$ by~\cite{cographs}.

\subsubsection{Level-$k$ phylogenetic networks}
	
Phylogenetic networks model the evolutionary history of species that have undergone reticulation events. From a mathematical perspective, they are simple rooted directed graphs with no directed cycles subject to the following constraints: The root has indegree $0$ and outdegree $2$. All non-root vertices are either tree nodes (indegree $1$, outdegree $2$), reticulation nodes (indegree $2$, outdegree $1$), or leaves (indegree $1$, outdegree $0$).

The leaves of a phylogenetic network are labelled by elements of a finite collection of species, similar to labels of a graph. Roughly speaking, given an integer $k \ge 1$ a level-$k$ network $N$ is a phylogenetic network where each block (of the associated undirected graph) contains at most $k$ reticulation nodes of $N$. Additionally, any block with at least $3$ vertices is required to contain at least $2$ vertices that are sources of bridges of $N$.

At least for $k=1$ and $k=2$, it was shown by~\cite{zbMATH07298368} that Boltzmann samplers may be constructed that allow approximate-size sampling of random level-$k$ networks in expected linear time, and exact-size sampling in expected quadratic time. Recent work~\cite[Sec. 2]{phylo} showed that for $k \ge 1$ random $n$-leaf phylogenetic networks may be generated by applying a blow-up procedure to a Bienaym\'{e}--Galton--Watson tree (with offspring distribution depending on $k$) conditioned on having $n$ leaves. This random tree may be generated in time $O(n)$ as described in Subsection~\ref{sec:gw_leaves}, and the expected time for applying the blow-up procedure is $O(n)$ by analogous arguments as in Subsection~\ref{ssec:sccg}. This results in a linear time exact-size sampler for random $n$-leaf level-$k$ phylogenetic networks.

\bibliographystyle{abbrv}
\bibliography{sampling}

\begin{thebibliography}{10}

\bibitem{albert2005simple}
M.~Albert and M.~Atkinson.
\newblock Simple permutations and pattern restricted permutations.
\newblock {\em Discrete Math.}, 300(1):1--15, 2005.

\bibitem{zbMATH07106247}
M.~{Bahrani} and J.~{Lumbroso}.
\newblock {Split-decomposition trees with prime nodes: enumeration and random
  generation of cactus graphs}.
\newblock In {\em Proceedings of the 15th workshop on analytic algorithmics and
  combinatorics, ANALCO '18, New Orleans, LA, January 8--9, 2018}, pages
  143--157. Philadelphia, PA: Society for Industrial and Applied Mathematics
  (SIAM), 2018.

\bibitem{zbMATH07286839}
F.~{Bassino}, M.~{Bouvel}, V.~{F\'eray}, L.~{Gerin}, M.~{Maazoun}, and
  A.~{Pierrot}.
\newblock {Universal limits of substitution-closed permutation classes}.
\newblock {\em {J. Eur. Math. Soc. (JEMS)}}, 22(11):3565--3639, 2020.

\bibitem{MR3637994}
F.~Bassino, M.~Bouvel, A.~Pierrot, C.~Pivoteau, and D.~Rossin.
\newblock An algorithm computing combinatorial specifications of permutation
  classes.
\newblock {\em Discrete Appl. Math.}, 224:16--44, 2017.

\bibitem{MR3773638}
M.~Bendkowski, O.~Bodini, and S.~Dovgal.
\newblock Polynomial tuning of multiparametric combinatorial samplers.
\newblock In {\em 2018 {P}roceedings of the {F}ifteenth {W}orkshop on
  {A}nalytic {A}lgorithmics and {C}ombinatorics ({ANALCO})}, pages 92--106.
  SIAM, Philadelphia, PA, 2018.

\bibitem{MR1629341}
F.~Bergeron, G.~Labelle, and P.~Leroux.
\newblock {\em {Combinatorial species and tree-like structures}}, volume~67 of
  {\em {Encyclopedia of Mathematics and its Applications}}.
\newblock Cambridge University Press, Cambridge, 1998.
\newblock Translated from the 1994 French original by Margaret Readdy, With a
  foreword by Gian-Carlo Rota.

\bibitem{MR2534261}
N.~Bernasconi, K.~Panagiotou, and A.~Steger.
\newblock {The degree sequence of random graphs from subcritical classes}.
\newblock {\em Combin. Probab. Comput.}, 18(5):647--681, 2009.

\bibitem{MR2593621}
O.~Bodini, E.~Fusy, and C.~Pivoteau.
\newblock Random sampling of plane partitions.
\newblock {\em Combin. Probab. Comput.}, 19(2):201--226, 2010.

\bibitem{MR3101704}
O.~Bodini, D.~Gardy, and A.~Jacquot.
\newblock Asymptotics and random sampling for {BCI} and {BCK} lambda terms.
\newblock {\em Theoret. Comput. Sci.}, 502:227--238, 2013.

\bibitem{MR2735331}
O.~Bodini and Y.~Ponty.
\newblock Multi-dimensional {B}oltzmann sampling of languages.
\newblock In {\em 21st {I}nternational {M}eeting on {P}robabilistic,
  {C}ombinatorial, and {A}symptotic {M}ethods in the {A}nalysis of {A}lgorithms
  ({A}of{A}'10)}, Discrete Math. Theor. Comput. Sci. Proc., AM, pages 49--63.
  Assoc. Discrete Math. Theor. Comput. Sci., Nancy, 2010.

\bibitem{MR2971340}
O.~Bodini, O.~Roussel, and M.~Soria.
\newblock Boltzmann samplers for first-order differential specifications.
\newblock {\em Discrete Appl. Math.}, 160(18):2563--2572, 2012.

\bibitem{MR2810913}
M.~Bodirsky, {\'E}.~Fusy, M.~Kang, and S.~Vigerske.
\newblock {Boltzmann samplers, {P}{\'o}lya theory, and cycle pointing}.
\newblock {\em SIAM J. Comput.}, 40(3):721--769, 2011.

\bibitem{zbMATH05039060}
M.~{Bodirsky} and M.~{Kang}.
\newblock {Generating outerplanar graphs uniformly at random}.
\newblock {\em {Comb. Probab. Comput.}}, 15(3):333--343, 2006.

\bibitem{MR2185278}
N.~Bonichon, C.~Gavoille, and N.~Hanusse.
\newblock {Canonical decomposition of outerplanar maps and application to
  enumeration, coding and generation}.
\newblock {\em J. Graph Algorithms Appl.}, 9(2):185--204 (electronic), 2005.

\bibitem{10.1214/20-EJP469}
J.~Borga, M.~Bouvel, V.~Féray, and B.~Stufler.
\newblock {A decorated tree approach to random permutations in
  substitution-closed classes}.
\newblock {\em Electronic Journal of Probability}, 25(none):1 -- 52, 2020.

\bibitem{zbMATH07298368}
M.~{Bouvel}, P.~{Gambette}, and M.~{Mansouri}.
\newblock {Counting phylogenetic networks of level 1 and 2}.
\newblock {\em {J. Math. Biol.}}, 81(6-7):1357--1395, 2020.

\bibitem{MR2465772}
G.~Chapuy, E.~Fusy, M.~Kang, and B.~Shoilekova.
\newblock A complete grammar for decomposing a family of graphs into
  3-connected components.
\newblock {\em Electron. J. Combin.}, 15(1):Research Paper 148, 39, 2008.

\bibitem{MR3245291}
N.~Curien and I.~Kortchemski.
\newblock Random non-crossing plane configurations: a conditioned
  {G}alton-{W}atson tree approach.
\newblock {\em Random Structures Algorithms}, 45(2):236--260, 2014.

\bibitem{DZ99}
A.~Denise and P.~Zimmermann.
\newblock {Uniform random generation of decomposable structures using
  floating-point arithmetic}.
\newblock {\em Theoretical Computer Science}, 218(2):233--248, 1999.

\bibitem{MR1034142}
N.~Dershowitz and S.~Zaks.
\newblock The cycle lemma and some applications.
\newblock {\em European J. Combin.}, 11(1):35--40, 1990.

\bibitem{MR2888318}
L.~Devroye.
\newblock Simulating size-constrained {G}alton-{W}atson trees.
\newblock {\em SIAM J. Comput.}, 41(1):1--11, 2012.

\bibitem{MR2873207}
M.~Drmota, {\'E}.~Fusy, M.~Kang, V.~Kraus, and J.~Ru{\'e}.
\newblock {Asymptotic study of subcritical graph classes}.
\newblock {\em SIAM J. Discrete Math.}, 25(4):1615--1651, 2011.

\bibitem{MR3184197}
M.~Drmota and M.~Noy.
\newblock {Extremal parameters in sub-critical graph classes}.
\newblock In {\em {A{NALCO}13---{M}eeting on {A}nalytic {A}lgorithmics and
  {C}ombinatorics}}, pages 1--7. SIAM, Philadelphia, PA, 2013.

\bibitem{MR2095975}
P.~Duchon, P.~Flajolet, G.~Louchard, and G.~Schaeffer.
\newblock {Boltzmann samplers for the random generation of combinatorial
  structures}.
\newblock {\em Combin. Probab. Comput.}, 13(4-5):577--625, 2004.

\bibitem{MR1284403}
R.~Ehrenborg and M.~M{\'e}ndez.
\newblock {Schr{\"o}der parenthesizations and chordates}.
\newblock {\em J. Combin. Theory Ser. A}, 67(2):127--139, 1994.

\bibitem{MR2498128}
P.~Flajolet, {\'E}.~Fusy, and C.~Pivoteau.
\newblock {Boltzmann sampling of unlabelled structures}.
\newblock In {\em {Proceedings of the {N}inth {W}orkshop on {A}lgorithm
  {E}ngineering and {E}xperiments and the {F}ourth {W}orkshop on {A}nalytic
  {A}lgorithmics and {C}ombinatorics}}, pages 201--211. SIAM, Philadelphia, PA,
  2007.

\bibitem{MR2483235}
P.~Flajolet and R.~Sedgewick.
\newblock {\em {Analytic combinatorics}}.
\newblock Cambridge University Press, Cambridge, 2009.

\bibitem{MR1290534}
P.~Flajolet, P.~Zimmerman, and B.~Van~Cutsem.
\newblock A calculus for the random generation of labelled combinatorial
  structures.
\newblock {\em Theoret. Comput. Sci.}, 132(1-2):1--35, 1994.

\bibitem{MR2573060}
E.~Fusy.
\newblock Uniform random sampling of planar graphs in linear time.
\newblock {\em Random Structures Algorithms}, 35(4):464--522, 2009.

\bibitem{zbMATH06569061}
O.~{Gim\'enez}, D.~{Mitsche}, and M.~{Noy}.
\newblock {Maximum degree in minor-closed classes of graphs}.
\newblock {\em {Eur. J. Comb.}}, 55:41--61, 2016.

\bibitem{MR2908619}
S.~Janson.
\newblock {Simply generated trees, conditioned {G}alton-{W}atson trees, random
  allocations and condensation}.
\newblock {\em Probab. Surv.}, 9:103--252, 2012.

\bibitem{knuth1997}
D.~E. Knuth.
\newblock {\em The Art of Computer Programming, Volume 1 (3rd Ed.): Fundamental
  Algorithms}.
\newblock Addison Wesley Longman Publishing Co., Inc., USA, 1997.

\bibitem{MR2946438}
I.~Kortchemski.
\newblock {Invariance principles for {G}alton-{W}atson trees conditioned on the
  number of leaves}.
\newblock {\em Stochastic Process. Appl.}, 122(9):3126--3172, 2012.

\bibitem{MR510047}
A.~Nijenhuis and H.~S. Wilf.
\newblock {\em Combinatorial algorithms}.
\newblock Academic Press, Inc. [Harcourt Brace Jovanovich, Publishers], New
  York-London, second edition, 1978.
\newblock For computers and calculators, Computer Science and Applied
  Mathematics.

\bibitem{MR2675698}
K.~Panagiotou and A.~Steger.
\newblock {Maximal biconnected subgraphs of random planar graphs}.
\newblock {\em ACM Trans. Algorithms}, 6(2):Art. 31, 21, 2010.

\bibitem{MR3773800}
K.~Panagiotou and B.~Stufler.
\newblock Scaling limits of random {P}\'{o}lya trees.
\newblock {\em Probab. Theory Related Fields}, 170(3-4):801--820, 2018.

\bibitem{MR3551197}
K.~Panagiotou, B.~Stufler, and K.~Weller.
\newblock Scaling limits of random graphs from subcritical classes.
\newblock {\em Ann. Probab.}, 44(5):3291--3334, 2016.

\bibitem{MR2946384}
C.~Pivoteau, B.~Salvy, and M.~Soria.
\newblock Algorithms for combinatorial structures: well-founded systems and
  {N}ewton iterations.
\newblock {\em J. Combin. Theory Ser. A}, 119(8):1711--1773, 2012.

\bibitem{MR3335013}
D.~Rizzolo.
\newblock Scaling limits of {M}arkov branching trees and {G}alton-{W}atson
  trees conditioned on the number of vertices with out-degree in a given set.
\newblock {\em Ann. Inst. Henri Poincar\'e Probab. Stat.}, 51(2):512--532,
  2015.

\bibitem{Skienna2020}
S.~S. Skiena.
\newblock {\em The Algorithm Design Manual}.
\newblock Texts in Computer Science. Springer International Publishing, London,
  3 edition, 2020.

\bibitem{sportiello2021boltzmann}
A.~Sportiello.
\newblock Boltzmann sampling of irreducible context-free structures in linear
  time, 2021.

\bibitem{aihpstufler2017}
B.~Stufler.
\newblock Scaling limits of random outerplanar maps with independent
  link-weights.
\newblock {\em Ann. Inst. H. Poincaré Probab. Statist.}, 53(2):900--915, 05
  2017.

\bibitem{StEJC2018}
B.~Stufler.
\newblock {Random enriched trees with applications to random graphs}.
\newblock {\em Electronic Journal of Combinatorics}, 25(3), 2018.

\bibitem{MR4132643}
B.~Stufler.
\newblock Limits of random tree-like discrete structures.
\newblock {\em Probab. Surv.}, 17:318--477, 2020.

\bibitem{phylo}
B.~Stufler.
\newblock A branching process approach to level-$k$ phylogenetic networks.
\newblock {\em Random Structures \& Algorithms, to appear}, 2021+.

\bibitem{cographs}
B.~Stufler.
\newblock Graphon convergence of random cographs.
\newblock {\em Random Structures \& Algorithms}, 59(3):464--491, 2021.

\end{thebibliography}

\end{document}